\newtheorem{thm}{Theorem}[section]
\newtheorem{cor}[thm]{Corollary}
\newtheorem{lem}[thm]{Lemma}
\newtheorem{prop}[thm]{Proposition}
\newtheorem{conj}[thm]{Conjecture}
\theoremstyle{definition}
\theoremstyle{remark}
\newtheorem{rem}[thm]{Remark}
\numberwithin{equation}{section}
\begin{document}

\newcommand{\thmref}[1]{Theorem~\ref{#1}}
\newcommand{\secref}[1]{Section~\ref{#1}}
\newcommand{\lemref}[1]{Lemma~\ref{#1}}
\newcommand{\propref}[1]{Proposition~\ref{#1}}
\newcommand{\corref}[1]{Corollary~\ref{#1}}
\newcommand{\conjref}[1]{Conjecture~\ref{#1}}
\newcommand{\remref}[1]{Remark~\ref{#1}}
\newcommand{\eqnref}[1]{(\ref{#1})}
\newcommand{\exref}[1]{Example~\ref{#1}}

\DeclarePairedDelimiterX\setc[2]{\{}{\}}{\,#1 \;\delimsize\vert\; #2\,}
\newcommand{\bn}[1]{\underline{#1\mkern-4mu}\mkern4mu }

\newcommand{\nc}{\newcommand}

\nc{\Z}{{\mathbb Z}}
\nc{\Zp}{\Z_+}
\nc{\C}{{\mathbb C}}
\nc{\N}{{\mathbb N}}
\nc{\D}{{\mathbb D}}

\nc{\hf}{\frac{1}{2}}
\nc{\bi}{\bibitem}
\nc{\wt}{\widetilde}
\nc{\dprime}{{\prime \prime}}
\nc{\ov}{\overline}
\nc{\un}{\underline}

\nc{\al}{\alpha}
\nc{\be}{\beta}
\nc{\de}{\delta}
\nc{\ep}{\epsilon}
\nc{\La}{\Lambda}
\nc{\la}{\lambda}
\nc{\si}{\sigma}
\nc{\ga}{\gamma}

\nc{\mc}{\mathcal}
\nc{\mf}{\mathfrak}
\nc{\cA}{\mc{A}}
\nc{\cC}{\mc{C}}
\nc{\cD}{\mc{D}}
\nc{\cE}{\mc{E}}
\nc{\cL}{{\mc L}}
\nc{\cP}{\mc{P}}
\nc{\cS}{\mc{S}}
\nc{\cT}{\mc{T}}
\nc{\fA}{{\mf A}}
\nc{\fB}{{\mf B}}
\nc{\fb}{{\mf b}}
\nc{\fg}{{\mf g}}
\nc{\fh}{{\mf h}}
\nc{\fS}{\mf{S}}

\nc{\bd}{\boldsymbol}
\nc{\wh}{\widehat}
\nc{\mr}{\mathring}
\nc{\Xl}{{\bd X}_{\! \ell}}
\nc{\Vac}{V_{{\rm crit}}}
\nc{\cUm}{\mc{U}_{-}}
\nc{\Ulz }{{U^\ell_{\! z}}}

\nc{\gl}{{\mf{gl}}}
\nc{\glmn}{\gl_{m|n}}
\nc{\glm}{\gl_{m}}
\nc{\hmn}{\fh_{m|n}}
\nc{\bmn}{\fb_{m|n}}
\nc{\bmns}{\cB^\s_{m|n}}
\nc{\hmns}{\fh^\s_{m|n}}
\nc{\Lmn}{L_{m|n}}
\nc{\Bmn}{\fB_{m|n}(\uz)}
\nc{\Bm}{\fB_{m}(\uz)}
\nc{\Bmnh}{\hat \fB_{m|n}(\uz)}
\nc{\smn}{\s_{m|n}}
\nc{\Smn}{\fS_{m+n}}
\nc{\cLmn}{\cL_{m|n}}
\nc{\Ximn}{\Xi_{m|n}}
\nc{\gmi}{t^{-1} \glmn[t^{-1}]}
\nc{\Ul}{U(\glmn)^{\otimes \ell}}
\nc{\vv}{|0\rangle}

\nc{\uz}{\un{\bd z}}
\nc{\uxi}{\un{\bd{\xi}}}
\nc{\uoxi}{\un{\bd{\ov\xi}}}
\nc{\ueta}{\un{\bd{\eta}}}

\nc{\s}{{\bf s}}
\nc{\hs}{\hat{s}}
\nc{\bs}{\bar{s}}
\nc{\bz}{\bar{0}}
\nc{\bo}{\bar{1}}

\nc{\T}{{\bf T}}
\nc{\pz}{\partial_z}
\nc{\Ber}{{\rm Ber}}
\nc{\Bers}{{\rm Ber}^{\bf s}}
\nc{\cdet}{{\rm cdet}}
\nc{\sing}{{\rm sing}}
\nc{\End}{{\rm End}}
\nc{\tr}{\mf{tr}}
\nc{\otr}{\ov{\mf{tr}}}

\nc{\glmrn}{\gl_{(m+r)|n}}
\nc{\hmrn}{\fh_{(m+r)|n}}
\nc{\Cmn}{{\mc C}_{m|n}}
\nc{\Cpk}{{\mc C}_{p|k}}
\nc{\Cmk}{{\mc C}_{m|k}}
\nc{\Cmrn}{{\mc C}_{(m+r)|n}}
\nc{\Imn}{\mathbb{I}_{m|n}}
\nc{\Eii}{E_{i,i}}
\nc{\mrn}{{(m+r)|n}}
\nc{\mrz}{{(m+r)|0}}
\nc{\mn}{{m|n}}
\nc{\Tmn}{\cT_{m|n}}

\nc{\fX}{{\mf X}^+}
\nc{\Xmn}{{{\mf X}^+_{m|n}}}
\nc{\Xmrn}{{{\mf X}^+_{(m+r)|n}}}
\nc{\OO}{\mathcal{O}}

\nc{\Pmn}{\mc{P}_{m|n}}
\nc{\Pmrn}{\mc{P}_{(m+r)|n}}
\nc{\ovla}{\ov{\la}}
\nc{\ovlamn}{{\ov{\la}^\mn}}

\nc{\fz}{{\mf z}}
\nc{\zmn}{\fz_{m|n}}
\nc{\zmnh}{\hat \fz_{m|n}}
\nc{\zgmn}{\fz(\wh{\gl}_{m|n})}
\nc{\zgm}{\fz(\wh{\gl}_m)}

\nc{\blb}{(\!(}
\nc{\brb}{)\!)}
\nc{\disp}{\displaystyle}
\nc{\ns}{\hspace{-0.3mm}}
\nc{\nns}{\hspace{-2mm}}
\nc{\tns}{\kern-1.1pt}

\advance\headheight by 2pt

\title[The Gaudin model and the Bethe ansatz]
{The Gaudin model for the general linear Lie superalgebra and the completeness of the Bethe ansatz}

\author[W. K. Cheong]{Wan Keng Cheong}
\address{Department of Mathematics, National Cheng Kung University, Tainan, Taiwan 701401}
\email{keng@ncku.edu.tw}

\author[N. Lam]{Ngau Lam}
\address{Department of Mathematics, National Cheng Kung University, Tainan, Taiwan 701401}
\email{nlam@ncku.edu.tw}

\begin{abstract}

Let $\Bmn$ be the Gaudin algebra of the general linear Lie superalgebra $\glmn$ with respect to a sequence $\uz \in \C^\ell$ of pairwise distinct complex numbers, and let $M$ be any $\ell$-fold tensor product of irreducible polynomial modules over $\glmn$. We show that the singular space $M^\sing$ of $M$ is a cyclic $\Bmn$-module and the Gaudin algebra $\Bmn_{M^\sing}$ of $M^\sing$ is a Frobenius algebra. We also show that $\Bmn_{M^\sing}$ is diagonalizable with a simple spectrum for a generic $\uz$ and give a description of an eigenbasis and its corresponding eigenvalues in terms of the Fuchsian differential operators with polynomial kernels. This may be interpreted as the completeness of a reformulation of the Bethe ansatz for $\Bmn_{M^\sing}$.

\end{abstract}

\maketitle

\setcounter{tocdepth}{1}

\section{Introduction}

The Gaudin model was introduced by Gaudin as a completely integrable quantum spin chain associated to the special linear algebra $\mf{sl}_2$ \cite{G76} and was later generalized to a model associated to an arbitrary semisimple Lie algebra $\mf{g}$ \cite{G83}.
Various generalizations of the model have since been proposed and investigated. Of particular interest is the Gaudin algebra (also known as the Bethe algebra) of $\mf{g}$ defined by (higher) Gaudin Hamiltonians for $\mf{g}$ (\cite{CF, FFR, MTV06, Ta}).
Although the general linear Lie algebra $\glm$ is not semisimple, the Gaudin algebra $\Bm$ for $\glm$, where $\uz:=(z_1, \ldots, z_\ell) \in \C^\ell$ is a sequence of pairwise distinct complex numbers, can be constructed in the same way as that for the special linear Lie algebra $\mf{sl}_m$.
Its structure and related Gaudin models have been explored in great detail (see, for instance, \cite{FFRyb, MTV06, MTV09-1, MTV09-2, MTV09-3, MTV10, MV05, MV07, Ryb}).

The problem of finding the common eigenvectors and eigenvalues of (higher) Gaudin Hamiltonians has played a central role in studying (generalized) Gaudin models.
Initially proposed by Bethe \cite{Bet} to find the eigenvectors and eigenvalues for the Hamiltonians of the XXX Heisenberg spin chain, the Bethe ansatz method has been extended to models associated to other spin chains and is widely used in statistical mechanics.

Let $V$ be an $\ell$-fold tensor product of finite-dimensional irreducible modules over $\glm$ and $V^\sing$ the singular space of $V$.
We denote by $\Bm_{V^\sing}$ the Gaudin algebra of $V^\sing$, which is defined to be the image of $\Bm$ in the algebra $\End(V^\sing)$ of linear endomorphisms of $V^\sing$.
The Bethe ansatz method explicitly describes a set of candidates for eigenvectors and the corresponding eigenvalues for $\Bm_{V^\sing}$.
The vectors obtained by the method are called Bethe vectors and are labeled by the solutions of the Bethe ansatz equations (\cite{BF, FFR}).
A famous conjecture predicts that the Bethe vectors form an eigenbasis for $\Bm_{V^\sing}$ for a generic $\uz$.
It is called the {\em completeness of the Bethe ansatz}. While the completeness is true for several examples (\cite{MV05}), counterexamples are found in \cite{MV07}.

 The obstacles encountered in the Bethe ansatz method can be overcome by the method of separation of variables developed by Sklyanin \cite{Sk}, which provides a construction of eigenvectors for the Gaudin model associated to $\gl_2$ based on the Fuchsian differential operators of order 2.
 Mukhin, Tarasov, and Varchenko \cite{MTV09-1, MTV09-2, MTV09-3} extended Sklyanin's approach to obtain a correspondence between the eigenspaces for $\Bm_{V^\sing}$ and the Fuchsian differential operators of order $m$ with polynomial kernels (see \eqref{Fuc}), where $V$ is an $\ell$-fold tensor product of irreducible polynomial modules over $\glm$.
 They also link the eigenvalues of $\Bm_{V^\sing}$ to the coefficients of the differential operators.
This yields a version of geometric Langlands correspondence for Gaudin models (see \cite{CT, Fr95, MTV12}).
Remarkably, the eigenvectors obtained from the correspondence form an eigenbasis for $\Bm_{V^\sing}$ if $\uz$ is generic (\thmref{glm-complete}).
It is unknown whether the eigenbasis admits a description as explicit as the one predicted by the Bethe ansatz method.
However, the positive answer to the diagonalization is what we initially expected for the Bethe ansatz.
It is thus reasonable to view Mukhin--Tarasov--Varchenko's result as the completeness of a reformulation of the Bethe ansatz for $\Bm_{V^\sing}$ even though it is different from the original one.

The Gaudin models for Lie superalgebras have also gained much attention (\cite{CCL, ChL, HM, HMVY, KM, Lu22, Lu23, MVY}).
In this paper, we are interested in the Gaudin algebra $\Bmn$ of the general linear Lie superalgebra $\glmn$, which is a subalgebra of $\Ul$ depending on a sequence $\uz \in \C^\ell$ of pairwise distinct complex numbers.
The algebra $\Bmn$ can be constructed via the Feigin--Frenkel center $\zgmn$ and the Berezinians (\secref{Gaudin}).
Furthermore, $\Bmn$ is commutative (\cite{MR}) and acts on any $\ell$-fold tensor product of $\glmn$-modules and its singular space.

Let $M$ be any $\ell$-fold tensor product of irreducible polynomial $\glmn$-modules.
We denote by $\Bmn_{N}$ the Gaudin algebra of $N$ for any $\Bmn$-submodule $N$ of $M$.
We prove the following by applying the tools of odd reflections in the theory of Lie superalgebras and the properties of Berezinians.

\begin{thm}[\thmref{cyc+frob}] \label{main1}
 $M^\sing$ is a cyclic $\Bmn$-module, and $\Bmn_{M^\sing}$ is a Frobenius algebra for any sequence $\uz \in \C^\ell$ of pairwise distinct complex numbers.
\end{thm}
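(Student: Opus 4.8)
The plan is to deduce the Frobenius property as a formal consequence of cyclicity together with a suitable contravariant form, and to prove the cyclicity of $M^\sing$ by reducing --- through odd reflections --- to the even case $n=0$.

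First I would isolate the following elementary principle. Let $A$ be a finite-dimensional commutative $\C$-algebra acting faithfully on a finite-dimensional space $W$, suppose $W=A\cdot w_0$ is a cyclic $A$-module, and suppose $W$ carries a nondegenerate bilinear form for which every element of $A$ is self-adjoint. Then $\phi(a):=\langle a w_0,w_0\rangle$ is a Frobenius functional for $A$: if $\langle a b w_0,w_0\rangle=0$ for all $b\in A$, then by self-adjointness $\langle a w_0,b w_0\rangle=0$ for all $b$, so $a w_0$ is orthogonal to $W=A w_0$ and hence $a w_0=0$; then $a\cdot(b w_0)=b\cdot(a w_0)=0$ for every $b$, so $a=0$ by faithfulness. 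Applying this with $A=\Bmn_{M^\sing}$ (which is commutative, since $\Bmn$ is, and acts faithfully on $M^\sing$ by construction) and $W=M^\sing$, the theorem reduces to two points: $(\mathrm{i})$ $M^\sing$ is a cyclic $\Bmn$-module, and $(\mathrm{ii})$ $M^\sing$ carries a nondegenerate bilinear form with respect to which every element of $\Bmn$ acts self-adjointly.

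For $(\mathrm{ii})$ I would equip each tensor factor of $M$ with its contravariant (Shapovalov) form relative to the anti-involution $\omega$ of $\glmn$ under which the Berezinian is invariant, and take $\langle\,,\,\rangle$ to be the $\ell$-fold super tensor product of these forms; it is a nondegenerate form on $M$ and is contravariant for the anti-automorphism $\omega^{\otimes\ell}$ of $\Ul$ that applies $\omega$ in each tensor slot without permuting slots. Since every irreducible polynomial $\glmn$-module is a direct summand of a tensor power of the natural module, and such tensor powers are semisimple by super Schur--Weyl duality, $M$ is a semisimple $\glmn$-module; hence $M$ is the orthogonal sum of its weight spaces (as $\omega$ fixes the Cartan pointwise), each weight space splits as $M_\mu=(M^\sing)_\mu\oplus(\mf{n}^- M)_\mu$, the two summands are mutually orthogonal by contravariance because $(M^\sing)_\mu$ is annihilated by $\mf{n}^+$, and therefore $\langle\,,\,\rangle$ restricts to a nondegenerate form on $M^\sing$. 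That every element of $\Bmn$ acts self-adjointly is where the properties of Berezinians enter: $\Bmn$ is generated by the coefficients of the Berezinian built from the Feigin--Frenkel center, and the invariance of the Berezinian under $\omega$ (applied entrywise) makes those coefficients $\omega^{\otimes\ell}$-fixed, which with contravariance gives $\langle b u,v\rangle=\langle u,b v\rangle$ for all $b\in\Bmn$.

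It remains to prove $(\mathrm{i})$, which I expect to be the main obstacle. Since $M$ is semisimple, the evaluation map $\phi\mapsto\phi(v_L)$ (with $v_L$ a highest weight vector of $L$) identifies $M^\sing$, as a $\Bmn$-module, with $\bigoplus_{[L]}\mathrm{Hom}_{\glmn}(L,M)$ --- the sum over the irreducible constituents of $M$, with its natural $\Bmn$-action --- and this description does not involve the Borel subalgebra used to define ``singular''; hence I am free to replace the standard Borel by any Borel linked to it by odd reflections. The plan is to use this freedom, together with the explicit way the highest weights of polynomial modules and the factorization of the Berezinian transform under an odd reflection, to run a reduction that terminates at the case $n=0$, where the cyclicity of the singular space over $\Bm$ is known (and, for generic $\uz$, is contained in \thmref{glm-complete}); at each odd reflection one re-expresses the contravariant form, a candidate cyclic vector, and the relevant Berezinian factorization in the new Borel. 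The delicate point is that, unlike for $\glm$, an odd reflection genuinely changes which constituent of $M$ is ``highest,'' so a natural candidate cyclic vector --- a tensor product of highest weight vectors in a carefully chosen Borel --- must be tracked through the whole chain of odd reflections, and the vanishing phenomena peculiar to the super setting must be excluded. Once $(\mathrm{i})$ is in place, the theorem follows from the first two steps.
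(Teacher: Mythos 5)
Your overall architecture --- cyclicity plus a nondegenerate symmetric form for which $\Bmn$ is self-adjoint implies the Frobenius property --- is exactly the paper's (it is \lemref{Lu20-1}), but both of your remaining steps have genuine gaps, and the central mechanism of the paper's proof is missing. The main problem is step (i). Odd reflections only change the Borel subalgebra of $\glmn$; they never leave $\glmn$, so no chain of odd reflections can ``terminate at the case $n=0$.'' The singular spaces of $M$ for the various Borels $\bmn^{\si}$ are all isomorphic as $\Bmn$-modules (this is \propref{B-isom}, essentially your $\mathrm{Hom}$-space observation), but none of them is a $\gl$-singular space for a purely even algebra. The idea you are missing is the \emph{enlargement}: one first lifts each tensor factor to an irreducible module over the larger superalgebra $\glmrn$ with $r$ chosen so large that every relevant highest weight $\ovla^{\mrn}$ vanishes on the odd part of the Cartan; the standard singular weight spaces of the lifted module then live inside the purely even truncation $\otr_{(m+r)|0}$, where Rybnikov's theorem (\thmref{glm-cyclic}, valid for all $\uz\in\Xl$, not just generic $\uz$) applies, while the $\si_m$-singular weight spaces recover $M^\sing$ via $\otr_{m|n}$. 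Even granting these identifications of vector spaces, you still need to know that the three Gaudin algebras $\fB_{m+r}(\uz)$, $\fB_{(m+r)|n}(\uz)$ and $\Bmn$ act compatibly on the corresponding spaces; this is \propref{trunc}, proved by factoring the Berezinian with \propref{HM1} and \propref{perm}, and it is the technical heart of the argument. Without it, cyclicity over $\fB_{m+r}(\uz)$ does not transfer to cyclicity over $\Bmn$.

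Your step (ii) also has a gap. The self-adjointness of $\Bmn$ with respect to a contravariant form built directly on $M$ is not a formality: the claim that the Berezinian of $\cLmn(\uz)$ is fixed by the transpose anti-involution applied entrywise is precisely the kind of statement that requires proof (in the even case it is \cite[Theorem 9.1]{MTV06}, a genuine theorem about column determinants of Manin matrices), and in the super case the Berezinian involves inverses of quasiminors, so invariance under an anti-automorphism is even less obvious. The paper avoids this entirely: it takes the tensor Shapovalov form on the $\gl_{m+r}$-singular space, where symmetry of $\fB_{m+r}(\uz)$ is known, and transports it to $M^\sing$ through the odd-reflection isomorphism $\phi$, using \propref{trunc} once more to see that $\Bmn$ is symmetric for the transported form. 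If you want to salvage your more direct construction of the form, you would need to supply a super analogue of the Mukhin--Tarasov--Varchenko symmetry theorem, which is not in the paper and is not trivial.
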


\thmref{main1} implies that the algebra $\Bmn_{M^\sing}$ has dimension equal to $\dim M^\sing$ and each of its eigenspace is one-dimensional. Every generalized eigenspace of $\Bmn_{M^\sing}$ is also a cyclic $\Bmn$-module (\corref{max}).

\begin{thm}[\thmref{diag}] \label{main2}
$\Bmn_{M^\sing}$ is diagonalizable with a simple spectrum for a generic $\uz$.
\end{thm}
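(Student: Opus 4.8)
The plan is to deduce \thmref{main2} from \thmref{main1} together with the differential-operator description of the spectrum and a genericity argument on $\uz$. Since \thmref{main1} gives that $\Bmn_{M^\sing}$ is Frobenius of dimension $\dim M^\sing$ with one-dimensional (ordinary) eigenspaces, diagonalizability is equivalent to the statement that $\Bmn_{M^\sing}$ has exactly $\dim M^\sing$ distinct maximal ideals, i.e. a simple spectrum. So the whole problem reduces to separating the joint eigenvalues of the commuting Gaudin Hamiltonians for a generic choice of $\uz$.

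\medskip

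First I would set up the reformulated Bethe ansatz correspondence in the super setting: attach to each joint eigenvalue of $\Bmn_{M^\sing}$ a Fuchsian (super) differential operator whose singular points are the $z_i$, with exponents governed by the highest weights of the tensor factors, and whose kernel is spanned by polynomials (this is the $\glmn$-analogue of the $\glm$ picture recalled before \thmref{glm-complete}). The coefficients of this differential operator are rational functions of $\uz$ and of finitely many auxiliary (Bethe-root-type) parameters, and they record the eigenvalues of the generators of $\Bmn_{M^\sing}$. The key point is that as $\uz$ varies, these data deform algebraically; I would make this precise by working over the space of admissible $\uz$ (an open subset of $\C^\ell$ with pairwise distinct coordinates) and viewing the spectrum of $\Bmn_{M^\sing}$ as the fiber of a finite scheme over that base.

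\medskip

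Next, the separation argument. Because $\Bmn_{M^\sing}$ is a finite-dimensional commutative algebra that is the specialization at $\uz$ of a family, the locus where two joint eigenvalues collide is a proper closed subvariety of the base — it is cut out by the vanishing of a discriminant-type polynomial in $\uz$ — provided that it is nonempty to begin with that the generic fiber is reduced with $\dim M^\sing$ points. To establish the latter, I would exhibit a single value (or a limiting regime) of $\uz$ at which the spectrum is already simple; the natural choice is the limit in which the $z_i$ become widely separated (or tend to a suitable configuration, e.g. a "large spacing" or rescaling limit), where the Gaudin Hamiltonians degenerate to a manageable form — essentially a sum of commuting Cartan-type operators plus lower-order corrections — whose spectrum on $M^\sing$ is visibly multiplicity-free once one uses that the tensor factors are irreducible polynomial modules and that $M^\sing$ decomposes with distinct weights in that limit. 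Having one simple fiber forces the generic fiber to be simple, and then the bad locus is closed and proper, i.e. a generic $\uz$ works.

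\medskip

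The main obstacle I anticipate is the degenerate-limit analysis: in the super case one must control the singular space $M^\sing$ and the leading behaviour of the Gaudin Hamiltonians carefully, since $\glmn$-weight spaces need not be one-dimensional and odd reflections can change which Borel (and hence which notion of highest weight) is being used. Handling this is exactly where the tools invoked for \thmref{main1} — odd reflections and the Berezinian identities — should be brought to bear, to reduce to a distinguished Borel in which the highest-weight combinatorics is transparent, and to guarantee that the limiting eigenvalues (read off from Berezinian/Fuchsian-operator data) are pairwise distinct. A secondary, more technical point is to verify that the discriminant locus is genuinely a \emph{proper} subvariety rather than all of the base, which is precisely what the existence of one simple fiber provides; so the logical order is: (1) establish the differential-operator/eigenvalue dictionary, (2) find one $\uz$ (or limit) with simple spectrum, (3) conclude by the openness of the simple-spectrum condition together with \thmref{main1}.
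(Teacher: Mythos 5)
Your overall framework --- Frobenius property plus one-dimensional eigenspaces from \thmref{main1}, so that diagonalizability reduces to producing $\dim M^\sing$ distinct characters, together with the observation that simplicity of the spectrum is an open condition in $\uz$ cut out by a discriminant --- is sound in principle, but it is not the paper's route, and as written it has a genuine gap at its crux. Everything in your argument hinges on step (2), exhibiting a single $\uz$ (or a limiting regime) where the spectrum is already simple, and this is exactly the hard content of the theorem; it is not carried out. The claim that in the widely separated limit the Hamiltonians degenerate to Cartan-type operators whose spectrum on $M^\sing$ is ``visibly multiplicity-free'' does not hold as stated: singular weight spaces of a tensor product are typically not one-dimensional, so weight data alone cannot separate eigenvalues, and in the naive large-separation limit the Gaudin Hamiltonians simply tend to zero unless carefully rescaled. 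Even in the purely even case the analogous degeneration (to Gelfand--Tsetlin-type algebras) is a substantial theorem, and no super analogue is established here. Likewise, your step (1) --- a direct super version of the Fuchsian-operator dictionary --- is something the paper only obtains \emph{after} the theorem is proved (\thmref{glmn-eigenbasis}), so it cannot be assumed as input.

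The paper's proof avoids all of this by a reduction to the non-super case rather than a degeneration in $\uz$. Using the truncation functors and odd reflections (\propref{truncation}, \propref{B-isom}, \propref{trunc}), each singular weight space $M^\sing_\ga$ is identified, as a module over the Gaudin algebra, with a singular weight space of a tensor product of polynomial $\gl_{m+r}$-modules, and \propref{eigen} shows that an eigenbasis for $\fB_{m+r}(\uz)$ there transports to an eigenbasis for $\Bmn_{M^\sing_\ga}$, with eigenvalues differing only by the factor $\pz^{-n-r}$. Diagonalizability for generic $\uz$ is then imported directly from the known theorem for the general linear Lie algebra (\thmref{glm-diag}, due to Mukhin--Tarasov--Varchenko and Rybnikov), and simplicity of the spectrum follows from \corref{max}(ii). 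If you want to salvage your approach, the missing ingredient is precisely this reduction: you should use odd reflections and truncation not merely to ``choose a convenient Borel'' but to replace the super problem by an already-solved even one, rather than attempting a new degenerate-limit analysis.
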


\thmref{main2} establishes the diagonalization of $\Bmn_M$ as $M$ is a direct sum of irreducible polynomial modules over $\glmn$ and the action of $\Bmn$ commutes with that of $\glmn$ on $M$ for a generic $\uz$.
We also see that given any singular weight $\mu$ of $M$, every eigenbasis for the algebra $\Bmn_{M^\sing_{\mu}}$ can be obtained from some eigenbasis for $(\fB_r)_{V^\sing}$, where $r$ is some positive integer and $V$ is an $\ell$-fold tensor product of some irreducible polynomial modules over $\gl_r$. Note that $r$ and $V$ depend on $\mu$.
Should the Bethe ansatz be complete for $(\fB_r)_{V^\sing}$, we will obtain an eigenbasis for $\Bmn_{M^\sing_{\mu}}$, which is written in terms of Bethe vectors in $V^\sing$, as well as a description of the corresponding eigenvalues (\thmref{Bethe-glmn-eigenvalue}).

Further, Mukhin--Tarasov--Varchenko's result on the diagonalization of the Bethe algebras for the general linear Lie algebras enables us to obtain an eigenbasis for $\Bmn_{M^\sing}$ which corresponds to the Fuchsian differential operators of appropriate orders with polynomial kernels and prescribed singularities.
Meanwhile, the corresponding eigenvalues can also be expressed as the coefficients of the differential operators with an appropriate adjustment (\thmref{glmn-eigenbasis}).
Such connections may be understood as a super version of geometric Langlands correspondence for Gaudin models.

We view \thmref{main2}, together with the relationship between the eigenspaces and Fuchsian differential operators, as the completeness of a reformulation of the Bethe ansatz for $\Bmn_{M^\sing}$.

The Feigin--Frenkel center $\zgmn$, arising in the construction of the Gaudin algebra $\Bmn$, is a huge commutative subalgebra of the superalgebra $U(\gmi)$.
Even for $n=0$, $\zgm$ is a polynomial algebra in infinitely many variables.
However, it has a nice property: a complete set of Segal--Sugawara vectors exists (\cite{FF, GW, Ha, CF, CM}).
It is conjectured that a similar (but not identical) property should be satisfied by $\zgmn$ for $n>0$ (see \conjref{FFC-gen} and also \cite[Remark 3.4(ii)]{MR}).
Solving the conjecture would be challenging. We will not deal with it here.
Instead, we will see that our results show some hope that there should be a positive answer to the conjecture.

The paper is organized as follows.
In \secref{Pre}, we present some background material that will be used in later sections.
In \secref{GM}, we introduce the Gaudin algebra $\Bmn$ of $\glmn$ and discuss some basics of polynomial $\glmn$-modules. We also establish some fundamental properties of $\Bmn$ acting on the tensor product of irreducible polynomial $\glmn$-modules.
In \secref{main}, we prove the main results of the paper. Particularly, we demonstrate \thmref{main1} (\thmref{cyc+frob}) and \thmref{main2} (\thmref{diag}).
In \secref{BAFF}, we relate our results to the Bethe ansatz and Fuchsian differential operators (\thmref{Bethe-glmn-eigenvalue} and \thmref{glmn-eigenbasis}). We also make some remarks on the Feigin--Frenkel center $\zgmn$.

\bigskip
\noindent{\bf Notations.} Throughout the paper, the symbols $\Z$, $\N$ and $\Zp$ stand for the sets of all, positive and non-negative integers, respectively, the symbol $\C$ for the field of complex numbers, and the symbol $\Z_2:=\{\bz, \bo\}$ for the field of integers modulo 2. All vector spaces, algebras, tensor products, etc., are over $\C$.
{\bf We fix $m \in \N$ and $n \in \Zp$}.

\bigskip

\section{Preliminaries} \label{Pre}

In this section, we introduce the general linear Lie superalgebra $\glmn$ and review the notions of Berezinians and pseudo-differential operators.

\subsection{The general linear Lie superalgebra $\glmn$}  \label{basic}

Let
$$
\Imn=\{\, 1, \ldots, m\} \cup \left\{\hf, \ldots, n-\hf \right\},
$$
and let $\pi:  \{1, \ldots, m+n \}  \longrightarrow \Imn $ be the bijection defined by
$$
\pi(i)=
\begin{cases}
\hspace{3mm} i  &\quad \text{if } i=1, \ldots, m;\\
      i-(m+\hf)         &\quad\text{if } i=m+1, \ldots, m+n.
\end{cases}
$$
We endow $\Imn$ with the total order given by the usual order of $\{1, \ldots, m+n \}$ via $\pi$. More precisely,
$$1<\cdots<m<\hf<\cdots<n-\hf.$$
The parity of $i$ is defined to be $|i|:=\ov{2i} \in \Z_2$ for $i \in \Imn$.

Let $\{e_i \, | \, i \in \Imn\}$ be the standard homogeneous ordered basis for the superspace $\C^{\mn}$, where the parity of $e_i$ is given by $|e_i|=|i|$.
The superspace of all $\C$-linear endomorphisms on $\C^{\mn}$ has a natural Lie superalgebra structure, which we denote by $\glmn$.
For any $i, j \in \Imn$, we denote by $E_{i,j}$ the $\C$-linear endomorphism on $\C^{\mn}$ defined by $E_{i,j} (e_k)=\de_{j, k} e_i$ for $k \in \Imn$, where $\de$ denotes the Kronecker delta. The set $\{ E_{i,j} \, | \, i, j \in \Imn \}$ is a homogeneous basis for $\glmn$.

Let
$
\bmn=\bigoplus_{\substack{ i,j \in \Imn, i \le j} }  \C E_{i,j}
$
be the standard Borel subalgebra of $\glmn$.
The corresponding Cartan subalgebra $\hmn$ of $\glmn$ has an ordered basis $\{ \Eii \, | \, i \in \Imn \}$.
The ordered dual basis in $\hmn^{*}$ is denoted by the set $\{ \ep_i \, | \, i \in \Imn \}$, where the parity of $\ep_i$ is given by $|\ep_i|=|i|$.

We will drop the symbol $|0$ from the subscript $m|0$. For instance, $\gl_m:=\gl_{m|0}$, $\fh_m:=\fh_{m|0}$, etc.

\subsection{Berezinians}\label{Ber}

Let $\cA$ be an associative unital superalgebra over $\C$.
The parity of a homogeneous element $a \in \cA$ is denoted by $|a|$, which lies in $\Z_2$.

Fix $k \in \N$. Let $A$ be a $k \times k$ matrix over $\cA$.
For any nonempty subset $P=\{i_1<\ldots<i_p\}$ of $\{1,\ldots,k\}$, the matrix $A_{P}:=\big[a_{i,j}\big]_{i, j \in P}$ is called a \emph{standard submatrix} of $A$.

Assume that $A$ has a two-sided inverse $A^{-1}=\big[\wt{a}_{i,j}\big]$.
For all $i, j=1, \ldots, k$, the \emph{$(i,j)$th quasideterminant} of $A$ is defined to be $|A|_{ij}:=\wt{a}_{j,i}^{-1}$ provided that $\wt{a}_{j,i}$ has an inverse in $\cA$. Following the notation of \cite{GGRW}, we write
$$
|A|_{ij}=
\begin{vmatrix}
 a_{1,1} 	& \ldots & a_{1,j} &\ldots & a_{1,k}\\
 \ldots	& \ldots & \ldots  &\ldots & \ldots	\\
 a_{i,1}& \ldots & \fbox{$a_{i,j}$} &\ldots &a_{i,k}\\
 \ldots	& \ldots & \ldots  &\ldots & \ldots	\\
 a_{k,1}&\ldots &a_{m,j} &\ldots &a_{k,k}
\end{vmatrix}.
$$
For $i=1,\ldots,k$, we define
$$
d_i(A)=\begin{vmatrix}
	 a_{1,1} 	& \ldots & a_{1,i}\\
	 \ldots	& \ldots & \ldots \\
	 a_{i,1}& \ldots & \fbox{$a_{i,i}$}
\end{vmatrix},\
$$
which are called the \emph{principal quasiminors} of $A$.

A $k \times k$ matrix $A$ over $\cA$ is called \emph{sufficiently invertible} if every principal quasiminor of $A$ is well defined, and $A$ is called \emph{amply invertible} if each of its standard submatrices is sufficiently invertible.

Let ${\s}=(s_1,\ldots,s_{m+n})$ be a sequence of 0's and 1's such that exactly $m$ of the $s_i$'s are 0 and the others are 1.
We call such a sequence a \emph{$0^m1^n$-sequence}.
Every $0^m1^n$-sequence can be written in the form
$(0^{m_1}, 1^{n_1}, \ldots, 0^{m_r}, 1^{n_r}),$
where the sequence begins with $m_1$ copies of $0$'s, followed by $n_1$ copies of $1$'s, and so on.
The set of all $0^m1^n$-sequences is denoted by $\cS_{\mn}$.

Let $\fS_{m+n}$ be the symmetric group on $\{1, \ldots, m+n\}$, and let $\s \in \cS_{\mn}$.
For any $\si \in \fS_{m+n}$ and any $(m+n)\times (m+n)$ matrix $A=[a_{i,j}]$ over $\cA$, we define $\s^\si=\left(s_{\si^{-1}(1)},s_{\si^{-1}(2)},\ldots,s_{\si^{-1}(m+n)} \right)$ and $A^\si=\big[a_{\si^{-1}(i),\si^{-1}(j)} \big]$.
We say that $A$ is of type $\s$ if $a_{i,j}$ is a homogeneous element of parity $|a_{i,j}|=\bs_i+\bs_j$ for any $i,j = 1,\ldots,m+n$.

For any $(m+n)\times (m+n)$ sufficiently invertible matrix $A$ of type $\s$  over $\cA$, the \emph{Berezinian of type $\s$} of $A$ is defined to be
$$
\Bers A=d_1(A)^{\hs_1}\ldots d_{m+n}(A)^{\hs_{m+n}},
$$
where $\hs_i:=(-1)^{s_i}$ (see \cite[(3.3)]{HM}). We refer the reader to \cite{Ber, Na, MR} for earlier definitions of Berezinians. 

An $(m+n)\times (m+n)$ matrix $A=\big[a_{i,j}\big]$ over $\cA$ is called a \emph{Manin matrix} of type $\s$ if $A$ is a matrix of type $\s$ satisfying the following relations
\begin{equation}
[a_{i,j}, a_{k,l}]=(-1)^{s_i s_j+s_i s_k+s_j s_k}[a_{k,j}, a_{i, l}]
\qquad\text{for all}\quad i,j,k,l = 1,\ldots,m+n,
\end{equation}
where $[a, b]:=ab-(-1)^{|a||b|}ba$ for any homogeneous elements $a, b \in \cA$.
The proof of the following proposition is straightforward (cf. \cite[Section 3]{HM}).

\begin{prop} 

Let $A$ be an $(m+n)\times (m+n)$ Manin matrix of type $\s$ over $\cA$.

\begin{enumerate}[\normalfont(i)]

\item If $P =\{i_1<\ldots<i_p\}$ is a nonempty subset of $\{1,\ldots,m+n\}$, then the standard submatrix $A_{P}$ of $A$ is a Manin matrix of type ${\s}_P:=(s_{i_1},\ldots s_{i_p})$.

\item For any $\si \in \fS_{m+n}$, $A^\si$ is a Manin matrix of type $\s^\si$.

\end{enumerate}

\end{prop}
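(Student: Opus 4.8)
The plan is to verify the two defining conditions of a Manin matrix of a prescribed type—namely, that the matrix is of that type and that it satisfies the quadratic relation (2.1)—directly from the definitions. Both conditions are \emph{local} in the indices (the parity of an entry and the sign exponent in (2.1) are functions only of the $s$-values at the indices that occur), so I expect the argument to be entirely routine; the only point that requires a little care is the bookkeeping of parities and of the sign exponents.

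For part (i), first I would write $P=\{i_1<\ldots<i_p\}$ and set $b_{k,l}:=a_{i_k,i_l}$ for $1\le k,l\le p$, so that $A_P=[b_{k,l}]$ and $(\s_P)_k=s_{i_k}$. Since $A$ is of type $\s$, each $b_{k,l}$ is homogeneous of parity $\bs_{i_k}+\bs_{i_l}$, which is exactly the parity prescribed by the type $\s_P$; hence $A_P$ is a matrix of type $\s_P$. For the relations, I would specialize (2.1) for $A$ to the quadruples $(i_k,i_l,i_{k'},i_{l'})$ whose entries all lie in $P$, obtaining
\[
[b_{k,l},b_{k',l'}]=(-1)^{s_{i_k}s_{i_l}+s_{i_k}s_{i_{k'}}+s_{i_l}s_{i_{k'}}}\,[b_{k',l},b_{k,l'}]
\qquad\text{for all }k,l,k',l',
\]
which is precisely relation (2.1) for $A_P$ with respect to $\s_P$. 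Thus $A_P$ is a Manin matrix of type $\s_P$.

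For part (ii), I would set $b_{i,j}:=a_{\si^{-1}(i),\si^{-1}(j)}$ and $t_i:=s_{\si^{-1}(i)}$, so that $A^\si=[b_{i,j}]$ and $\s^\si=(t_1,\ldots,t_{m+n})$. Then $b_{i,j}$ is homogeneous of parity $\bs_{\si^{-1}(i)}+\bs_{\si^{-1}(j)}=\bar t_i+\bar t_j$, so $A^\si$ is of type $\s^\si$. Applying (2.1) for $A$ to the quadruple $(\si^{-1}(i),\si^{-1}(j),\si^{-1}(k),\si^{-1}(l))$ and using $s_{\si^{-1}(\cdot)}=t_{\cdot}$ gives
\[
[b_{i,j},b_{k,l}]=(-1)^{t_it_j+t_it_k+t_jt_k}\,[b_{k,j},b_{i,l}]
\qquad\text{for all }i,j,k,l,
\]
which is relation (2.1) for $A^\si$ with respect to $\s^\si$. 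Hence $A^\si$ is a Manin matrix of type $\s^\si$.

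In short, the entire content of the proposition is that passing to a standard submatrix merely restricts, and simultaneously permuting rows and columns merely relabels, the index set on which (2.1) is imposed, while the parity condition and the sign in (2.1) transform compatibly because they depend only on the $s$-values at the indices involved. I do not anticipate any genuine obstacle; the proposition is recorded at this point so that the Berezinian identities established afterwards can be applied freely to standard submatrices and to permuted matrices.
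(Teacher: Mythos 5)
Your proof is correct and is exactly the routine verification the paper has in mind: the paper omits the proof entirely, remarking only that it is straightforward (cf.\ Section~3 of \cite{HM}), and your direct check that the parity condition and the relation (2.1) depend only on the $s$-values at the indices involved is the intended argument. The sign bookkeeping in both parts is accurate, so nothing further is needed.
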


The following propositions are useful in calculating the Berezinians of Manin matrices.

\begin{prop}[{\cite[Proposition 3.5]{HM}}] \label{HM1}
Let $A$ be an $(m+n)\times (m+n)$ amply invertible Manin matrix of type $\s$ over $\cA$.
Fix $k\in \{1,\ldots,m+n-1\}$. We write
$$
A=\begin{bmatrix} W & X\\ Y&Z \end{bmatrix},
$$
where $W,X,Y,Z$ are respectively $k\times k$, $k\times(m+n-k)$, $(m+n-k)\times k$, and $(m+n-k)\times (m+n-k)$ matrices.
Then the matrices $W$ and $Z-YW^{-1}X$ are sufficiently invertible Manin matrices of types $\s^\prime:=(s_1,\ldots,s_k)$ and $\s^\dprime:=(s_{k+1},\ldots,s_{m+n})$, respectively. Moreover,
$$
\Bers A={\rm Ber}^{\s^\prime} W \cdot {\rm Ber}^{\s^\dprime} \big(Z-YW^{-1}X \big).
$$
\end{prop}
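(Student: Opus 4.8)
The plan is to handle the structural claim and the Berezinian identity separately. Note first that $W = A_{\{1,\ldots,k\}}$ is a standard submatrix of $A$, hence a Manin matrix of type $\s' = (s_1,\ldots,s_k)$, and that $W$ is amply invertible because its standard submatrices are standard submatrices of $A$; in particular $W$ has a two-sided inverse, so $W^{-1}$ and $S := Z - YW^{-1}X$ are well defined. To prove that $S$ is a Manin matrix of type $\s'' = (s_{k+1},\ldots,s_{m+n})$ I would induct on $k$. The case $k = 1$ is the assertion that the matrix with entries $a'_{i,j} := a_{i,j} - a_{i,1}a_{1,1}^{-1}a_{1,j}$, $i,j = 2,\ldots,m+n$, is a Manin matrix: one checks $|a'_{i,j}| = \bs_i + \bs_j$ and then verifies the relations $[a'_{i,j}, a'_{k,l}] = (-1)^{s_is_j + s_is_k + s_js_k}[a'_{k,j}, a'_{i,l}]$ by direct substitution, after first extracting the commutation rules between $a_{1,1}^{-1}$ and the remaining entries from the relations $[a_{1,1}, a_{p,q}] = \pm[a_{p,1}, a_{1,q}]$. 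For the inductive step I would form the Schur complement $A^{(1)}$ of the single entry $a_{1,1}$ in $A$, which is a Manin matrix of type $(s_2,\ldots,s_{m+n})$ by the case $k = 1$, apply the inductive hypothesis to its leading $(k-1)\times(k-1)$ block, and then invoke the quotient property of Schur complements --- valid over the noncommutative ring $\cA$ once the relevant blocks are invertible --- to identify the iterated Schur complement with $S$.

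For the Berezinian identity I would use the heredity principle for quasideterminants (see \cite{GGRW}): a quasideterminant of a standard submatrix is unchanged when one passes to the Schur complement of a disjoint leading block. For $i \le k$ the principal quasiminor $d_i(A) = |A_{\{1,\ldots,i\}}|_{ii}$ involves only the top-left $i \times i$ block, so $d_i(A) = d_i(W)$. For $i > k$, applying heredity to $A_{\{1,\ldots,i\}}$ with leading block $W$ gives $d_i(A) = d_{i-k}(S)$, since the Schur complement of $W$ in $A_{\{1,\ldots,i\}}$ is exactly the leading $(i-k) \times (i-k)$ submatrix of $S$. The same reasoning, applied to the standard submatrices $A_{\{1,\ldots,k\}\cup P}$ for $P \subseteq \{k+1,\ldots,m+n\}$, shows that every principal quasiminor of every standard submatrix of $S$ is well defined, so $S$ is amply invertible and ${\rm Ber}^{\s''}S$ is defined. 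Since $\hs_i = (-1)^{s_i}$ is the exponent attached to $d_i(W)$ in ${\rm Ber}^{\s'}W$ when $i \le k$ and to $d_{i-k}(S)$ in ${\rm Ber}^{\s''}S$ when $i > k$, substituting the above into the definition yields
$$
\Bers A = \prod_{i=1}^{m+n} d_i(A)^{\hs_i} = \prod_{i=1}^{k} d_i(W)^{\hs_i} \cdot \prod_{i=k+1}^{m+n} d_{i-k}(S)^{\hs_i} = {\rm Ber}^{\s'} W \cdot {\rm Ber}^{\s''} S .
$$

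I expect the main obstacle to be the base case $k = 1$, namely the verification that the rank-one Schur complement of a Manin matrix is again a Manin matrix. Although this is a finite computation, one must first establish the auxiliary commutation relations involving $a_{1,1}^{-1}$ and then carry the sign factors $(-1)^{s_is_j + s_is_k + s_js_k}$ carefully through the substitution, and it is here that the super structure enters in an essential way. The remaining ingredients are formal: the heredity of quasideterminants, the quotient identity for Schur complements with the factors taken in the correct order over $\cA$, and the routine bookkeeping that ample invertibility is inherited at each stage of the induction.
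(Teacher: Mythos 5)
Your proposal is correct and follows essentially the same route as the proof in the cited source \cite{HM} (the paper itself offers no proof, only the citation): the heredity of quasideterminants gives $d_i(A)=d_i(W)$ for $i\le k$ and $d_i(A)=d_{i-k}(Z-YW^{-1}X)$ for $i>k$, from which the ordered-product factorization of $\Bers A$ is immediate, and the Manin property of the Schur complement reduces to the rank-one case exactly as you describe. The only part you leave as a plan rather than execute is that rank-one computation (the commutation relations involving $a_{1,1}^{-1}$), which is indeed the known computational core of the lemma and does go through.
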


\begin{prop} [{\cite[Proposition 3.6]{HM}}] \label{HM2}

Let $A$ be an $(m+n)\times (m+n)$ amply invertible Manin matrix of type $\s$ over $\cA$.
Then
    $$
    {\rm Ber}^{\s^\si} A^\si=\Bers A  \qquad \mbox{for any $\si \in \fS_{m+n}$.}
    $$ 
\end{prop}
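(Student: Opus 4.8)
The plan is to reduce the identity to the case of a simple transposition $\si_k=(k,k+1)$ and then, via \propref{HM1}, to a $2\times 2$ matrix. First I would note that $\fS_{m+n}$ is generated by the $\si_k$, that $(A^\si)^\tau=A^{\tau\si}$ and $(\s^\si)^\tau=\s^{\tau\si}$ for all $\si,\tau$, and that $A^{\si_k}$ is amply invertible whenever $A$ is; granting these, an induction on the length of $\si$ written as a product of simple transpositions reduces the statement to proving ${\rm Ber}^{\s^{\si_k}}A^{\si_k}=\Bers A$ for each $k$. The ample invertibility of $A^{\si_k}$ needs a short check: a standard submatrix of $A^{\si_k}$ is either a standard submatrix of $A$, or is obtained from one by swapping two adjacent rows and the corresponding two columns, and in the latter case one sees that its principal quasiminors stay well defined by tracking how quasideterminants behave under such a swap — here one uses that $A$ is amply, not merely sufficiently, invertible.

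Fixing $k$, I would apply \propref{HM1} twice. Splitting $A$ at position $k+1$ writes $\Bers A$ as ${\rm Ber}^{(s_1,\dots,s_{k+1})}(A_{\{1,\dots,k+1\}})$ times the Berezinian of a Schur complement that involves only indices outside $\{k,k+1\}$; since $\si_k$ permutes only indices in $\{1,\dots,k+1\}$, the conjugation $W\mapsto W^{\si_k}$ of the leading block is by a permutation matrix $P$ with $P^2=I$, so $YW^{-1}X$ is unchanged and that Schur complement is untouched. This reduces matters to the $(k+1)\times(k+1)$ matrix $B:=A_{\{1,\dots,k+1\}}$ with $\si_k$ now transposing its last two indices. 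Splitting $B$ at position $k-1$ then gives ${\rm Ber}^{(s_1,\dots,s_{k+1})}B={\rm Ber}^{(s_1,\dots,s_{k-1})}W\cdot{\rm Ber}^{(s_k,s_{k+1})}(Z-YW^{-1}X)$ with $Z$ the trailing $2\times 2$ block, and the same observation shows $\si_k$ fixes the first factor and replaces $C:=Z-YW^{-1}X$ by $C^\si$, with $\si$ the transposition of its two indices. By \propref{HM1}, $C$ is a sufficiently invertible Manin matrix of type $(s_k,s_{k+1})$, and ample invertibility of $A$ forces $C^\si$ to be sufficiently invertible as well, so the whole statement comes down to the single $2\times 2$ identity
\[
{\rm Ber}^{(s_{k+1},s_k)}C^\si={\rm Ber}^{(s_k,s_{k+1})}C .
\]

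Finally I would verify this by a direct computation. Writing $C=\left(\begin{smallmatrix}a&b\\ c&d\end{smallmatrix}\right)$ and writing its type as $(s_1,s_2)$, the definitions give ${\rm Ber}^{(s_1,s_2)}C=a^{\hs_1}(d-ca^{-1}b)^{\hs_2}$ and ${\rm Ber}^{(s_2,s_1)}C^\si=d^{\hs_2}(a-bd^{-1}c)^{\hs_1}$, and one checks the equality in the four cases $(s_1,s_2)\in\{0,1\}^2$ from the Manin relations among $a,b,c,d$. For $s_1=s_2=0$ the relations yield $ac=ca$, $bd=db$ and $ad-da=cb-bc$, whence $aca^{-1}b=cb$, $dbd^{-1}c=bc$, and both sides equal $ad-cb=da-bc$; the case $s_1=s_2=1$ is parallel, with rows commuting in place of columns. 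In the mixed cases one uses in addition that the relevant off-diagonal entry squares to zero — for $(s_1,s_2)=(0,1)$, for instance, the Manin relations give $ac=ca$, $cd=dc$, $c^2=0$ and $ad-da=cb+bc$, which is exactly what the identity requires — and $(1,0)$ is symmetric.

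The hard part here is not any single computation but the invertibility bookkeeping that runs through the whole argument: at each stage one must make sure that the objects appearing — the permuted matrices $A^{\si_k}$, the leading standard submatrices, and the $2\times 2$ Schur complements together with their transposes — are amply, or at least sufficiently, invertible, so that every Berezinian written down is actually defined. \propref{HM1} takes care of most of this, and the remainder rests on the elementary behaviour of quasideterminants under row/column permutations noted in the first step; the $2\times 2$ verification itself is routine.
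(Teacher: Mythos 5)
Your argument is correct, but note that the paper does not prove this statement at all: it is quoted verbatim from \cite[Proposition 3.6]{HM}, so there is no in-paper proof to compare against. Your route --- reduce to adjacent transpositions, check that ample invertibility survives such a swap, apply \propref{HM1} twice to isolate an unchanged Schur complement and a $2\times 2$ block, and verify the four parity cases from the Manin relations (using $ac=ca$, $bd=db$, $ad-da=cb-bc$ in the even case and $c^2=0$ or $b^2=0$ in the mixed cases) --- is essentially the standard proof of the cited result, and the invertibility bookkeeping you flag does go through because quasideterminants are invariant under simultaneous row/column permutations and every standard submatrix of $A^{\si_k}$ is, up to such a permutation, a standard submatrix of $A$.
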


Let $A$ be an $m \times m$ matrix over $\cA$. The \emph{column determinant} of $A$ is defined to be
$$
\cdet \,  A=\sum_{\si \in \mf{S}_m} (-1)^{l(\si)} \,  a_{\si(1),1}\ldots a_{\si(m),m},
$$
where $l(\si)$ denotes the length of $\si$. We have the following.

\begin{prop}[{\cite[Lemma 8]{CFR}}] \label{CFR}
Let $\s=(0^m)$. For any $m\times m$ sufficiently invertible Manin matrix $A$ of type $\s$ over $\cA$, we have $\Bers A=\cdet \,  A$.
\end{prop}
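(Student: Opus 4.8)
The plan is to reduce the Berezinian to a product of principal quasiminors and then invoke the classical identity (due to Chervov--Falqui--Rubtsov) expressing the column determinant of a Manin matrix through its principal quasiminors. Since $\s=(0^m)$ we have $\hs_i=(-1)^{s_i}=1$ for every $i$, so by definition $\Bers A=d_1(A)\,d_2(A)\cdots d_m(A)$. Note each factor is well defined because $A$ is sufficiently invertible, and in particular $d_1(A)=a_{1,1}$ is an invertible element of $\cA$. It thus suffices to show that $d_1(A)\cdots d_m(A)=\cdet A$ for every $m\times m$ sufficiently invertible Manin matrix $A$ of type $(0^m)$, which is exactly \cite[Lemma 8]{CFR}; I outline a proof below.

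Argue by induction on $m$, the case $m=1$ being the tautology $d_1([a_{1,1}])=a_{1,1}=\cdet[a_{1,1}]$. For the inductive step write $A=\begin{bmatrix}a_{1,1}&X\\ Y&Z\end{bmatrix}$ with $X$ a row, $Y$ a column, and $Z$ of size $(m-1)\times(m-1)$, and set $A':=Z-Y a_{1,1}^{-1}X$. Two facts are needed. First, $A'$ is an $(m-1)\times(m-1)$ sufficiently invertible Manin matrix of type $(0^{m-1})$ whose principal quasiminors satisfy $d_{i-1}(A')=d_i(A)$ for $i=2,\dots,m$: stability of the Manin property under taking the $(1,1)$-Schur complement is the $k=1$ instance of (the proof of) \propref{HM1}, and the coincidence of the quasiminors --- which also shows $d_{i-1}(A')$ is well defined whenever $d_i(A)$ is --- is the heredity/Sylvester property of quasideterminants applied to the leading submatrices $A_{\{1,\dots,i\}}$ (cf. \cite{GGRW}). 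Second, the column determinant of a Manin matrix is multiplicative along this decomposition: $\cdet A=a_{1,1}\cdot\cdet A'$; this is the Schur-complement formula for column determinants of Manin matrices, again from \cite{CFR}. Granting these,
\[
\Bers A=d_1(A)\prod_{i=2}^{m} d_i(A)=a_{1,1}\prod_{j=1}^{m-1} d_j(A')=a_{1,1}\cdot\cdet A'=\cdet A,
\]
where the second equality uses $d_1(A)=a_{1,1}$ together with $d_i(A)=d_{i-1}(A')$, and the third is the induction hypothesis applied to $A'$ (noting $\prod_{j=1}^{m-1}d_j(A')=\Ber^{(0^{m-1})}A'$).

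The one point requiring care is the invertibility hypothesis: we are given only sufficient invertibility (all principal quasiminors well defined), not ample invertibility, so \propref{HM1} cannot be quoted verbatim. One therefore verifies the two facts above working only with leading submatrices: directly from the Manin relations, forming the $(1,1)$-Schur complement preserves the Manin property, and the heredity identity $d_i(A)=d_{i-1}(A')$ holds quasiminor by quasiminor, so that every matrix appearing in the induction is sufficiently invertible at each stage with no hypothesis on non-leading submatrices. Everything else is a routine unwinding of the definitions of $\Bers$ and $\cdet$.
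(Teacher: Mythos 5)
The paper offers no proof of this proposition; it is quoted verbatim from \cite[Lemma 8]{CFR}. Your argument is correct and is essentially the standard proof from that source: since $\hs_i=1$ for all $i$, the Berezinian collapses to the product $d_1(A)\cdots d_m(A)$ of principal quasiminors, and the identity $d_1(A)\cdots d_m(A)=\cdet A$ follows by induction on $m$ via the $(1,1)$-Schur complement, using heredity of quasideterminants ($d_i(A)=d_{i-1}(A')$) and the multiplicativity $\cdet A=a_{1,1}\cdot\cdet A'$ for Manin matrices. You rightly flag the one genuine subtlety --- that only sufficient (not ample) invertibility is assumed, so \propref{HM1} cannot be invoked directly and the Schur-complement facts must be checked on leading submatrices only --- and your resolution of it is sound; the two deferred lemmas are exactly the content of \cite{CFR}, which is the reference the paper itself leans on.
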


\propref{CFR} has the following generalization.

\begin{prop}[{\cite[Definition 2.6 and Theorem 2.11]{MR}}] 
Let $\s=(0^m, 1^n)$.
For any $(m+n)\times (m+n)$ sufficiently invertible Manin matrix $A$ of type $\s$ over $\cA$, we have 
$$
\Bers A=\Big( \sum_{\si \in \mf{S}_m} (-1)^{l(\si)} \,  a_{\si(1),1}\ldots a_{\si(m),m} \Big) \Big( \sum_{\tau \in \mf{S}_n} (-1)^{l(\tau)} \,  \wt{a}_{m+1, m+\tau(1)}\ldots \wt{a}_{m+n, m+\tau(n)} \Big),
$$
where $\wt{a}_{i,j}$ denotes the $(i,j)$th entry of $A^{-1}$.
\end{prop}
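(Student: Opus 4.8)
The plan is to reduce the assertion to the case $\s=(0^m)$ already established in \propref{CFR}, by first peeling off the $m$ bosonic rows and columns and then treating the remaining fermionic block by a separate induction. Write $A=\begin{bmatrix}W&X\\Y&Z\end{bmatrix}$ with $W$ the leading $m\times m$ block, and apply \propref{HM1} with $k=m$. Since $\s=(0^m,1^n)$, this yields
\[
\Bers A=\Ber^{(0^m)}W\cdot\Ber^{(1^n)}\!\big(Z-YW^{-1}X\big),
\]
where $B:=Z-YW^{-1}X$ is an $n\times n$ Manin matrix of type $(1^n)$, all of whose entries are even. By \propref{CFR}, $\Ber^{(0^m)}W=\cdet W=\sum_{\si\in\fS_m}(-1)^{l(\si)}a_{\si(1),1}\cdots a_{\si(m),m}$, which is exactly the first factor in the claimed product; and the Schur--complement identity for inverses identifies $B^{-1}$ with the bottom-right block $\big[\wt a_{m+i,m+j}\big]_{1\le i,j\le n}$ of $A^{-1}$. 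Writing $\mathrm{rdet}\,M:=\sum_{\tau\in\fS_n}(-1)^{l(\tau)}M_{1,\tau(1)}\cdots M_{n,\tau(n)}$ for the row determinant of an $n\times n$ matrix $M$, the whole statement thus reduces to the identity
\[
\Ber^{(1^n)}B=\mathrm{rdet}\,\big(B^{-1}\big)
\]
for an arbitrary amply invertible $n\times n$ Manin matrix $B$ of type $(1^n)$.

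I would prove this reduced identity by induction on $n$; the case $n=1$ is the tautology $\Ber^{(1)}[b]=b^{-1}=\mathrm{rdet}[b^{-1}]$. For $n>1$, apply \propref{HM1} to $B$ itself with $k=1$, obtaining $\Ber^{(1^n)}B=b_{1,1}^{-1}\,\Ber^{(1^{n-1})}B'$ with $B'=Z'-Y'b_{1,1}^{-1}X'$ the $(n-1)\times(n-1)$ Schur complement, again a Manin matrix of type $(1^{n-1})$, whose inverse is the block $\big[\wt b_{i,j}\big]_{2\le i,j\le n}$ of $B^{-1}$. By the inductive hypothesis, $\Ber^{(1^{n-1})}B'=\mathrm{rdet}\,\big[\wt b_{i,j}\big]_{2\le i,j\le n}$. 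On the other hand, expanding $\mathrm{rdet}\,(B^{-1})$ along its first row and using $b_{1,1}\wt b_{1,j}=\de_{1,j}-\sum_{k=2}^{n}b_{1,k}\wt b_{k,j}$ (from $BB^{-1}=I$), one gets
\[
b_{1,1}\cdot\mathrm{rdet}\,\big(B^{-1}\big)=\mathrm{rdet}\,\big[\wt b_{i,j}\big]_{2\le i,j\le n}-\sum_{k=2}^{n}b_{1,k}\cdot\mathrm{rdet}\,M^{(k)},
\]
where $M^{(k)}$ is the $n\times n$ matrix whose first row is the $k$-th row of $B^{-1}$ and whose rows $2,\dots,n$ are the rows $2,\dots,n$ of $B^{-1}$; in particular $M^{(k)}$ has two equal rows. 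Comparing the two displays, the reduced identity follows once every $\mathrm{rdet}\,M^{(k)}$ vanishes.

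The vanishing $\mathrm{rdet}\,M^{(k)}=0$ for $2\le k\le n$ is the main obstacle, and noncommutatively it does not follow merely from the repeated row. What has to be established is that the rows of $B^{-1}$ indexed $2,\dots,n$ carry enough commutativity among their entries that the matrices $M^{(k)}$ built from them again satisfy (the transpose of) the Manin relations, so that their row determinants vanish for the same reason that an ordinary Manin matrix with two coinciding columns has vanishing column determinant (cf.\ \cite{CFR}). Extracting these commutation relations for the entries of $B^{-1}$ from the defining relations of $B$ and the identities $BB^{-1}=B^{-1}B=I$ is the only genuinely computational step.

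Finally, the statement assumes only that $A$ is sufficiently invertible, whereas the argument above used ample invertibility of $A$ (hence of $W$ and $B$). This gap is closed in the usual way: the asserted equality is an identity in the subalgebra generated by the finitely many quantities occurring in it, so it suffices to prove it over the localization of the universal superalgebra presented by the entries of an $(m+n)\times(m+n)$ Manin matrix of type $\s$ in which every standard submatrix is invertible, and then to specialize.
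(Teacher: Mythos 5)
Your reduction is sound as far as it goes: splitting off the even block with \propref{HM1} at $k=m$, identifying $\Ber^{(0^m)}W$ with $\cdet W$ via \propref{CFR}, and using the Schur-complement identity to recognize $(Z-YW^{-1}X)^{-1}$ as the block $\big[\wt a_{m+i,m+j}\big]$ of $A^{-1}$ correctly reduces everything to the identity $\Ber^{(1^n)}B=\mathrm{rdet}(B^{-1})$, and your row-expansion bookkeeping in the induction is correct. (Note that the paper offers no proof of this proposition at all; it is quoted verbatim from Molev--Ragoucy, so there is no internal argument to compare against.)

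However, your proof has a genuine gap, and you have located it yourself: the vanishing $\mathrm{rdet}\,M^{(k)}=0$ for the matrices $M^{(k)}$ with a repeated row. This is not a technicality to be deferred --- it is the actual content of \cite[Theorem 2.11]{MR}. A Manin matrix $B$ of type $(1^n)$ satisfies $[b_{i,j},b_{k,l}]=-[b_{k,j},b_{i,l}]$, which yields commutativity of entries within a \emph{row} of $B$; but what you need is a set of relations among the entries of $B^{-1}$ strong enough to kill a row determinant with two coinciding rows, and such relations do not follow formally from $BB^{-1}=B^{-1}B=I$ together with the relations of $B$. Establishing that the relevant block of $A^{-1}$ is itself (a transposed/signed) Manin matrix is precisely the hard computation in the quoted reference, and without it your induction does not close. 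A secondary, smaller issue: your final specialization argument from the universal localized algebra needs care, because the intermediate steps invert quantities (quasiminors of non-principal submatrices, via the ample invertibility hypothesis of \propref{HM1}) whose images in $\cA$ are not guaranteed to be invertible under mere sufficient invertibility of $A$; you should check that every element actually inverted along your induction chain is one of the principal quasiminors $d_i(A)$ or a leading block of $A$ or of its successive Schur complements, all of which sufficient invertibility does control.
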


\subsection{Pseudo-differential operators} \label{pseudo}

Let $\cA$ be an associative unital superalgebra over $\C$ and $z$ an even variable.
We denote by $\cA [\tns[z]\tns]$ and $\cA \blb z \brb$ the superalgebras of formal power series and Laurent series in $z$ with coefficients in $\cA$, respectively. Let $\cA\blb z^{-1}, \pz^{-1} \brb$ denote the set of all formal series of the form
$$
  \sum_{j=-\infty}^s \sum_{i=-\infty}^r a_{ij} z^i \pz^j,
$$
for some $r, s \in \Z$ and $a_{ij} \in \cA$. We may endow $\cA\blb z^{-1}, \pz^{-1} \brb$ with a superalgebra structure using the rules:
\begin{equation}\label{rule}
\pz \pz^{-1}=\pz^{-1} \pz=1, \quad\pz^{j} z^k=\sum_{i=0}^{\infty}  \binom{j}{i} \binom{k}{i} i! \,   z^{k-i} \, \pz^{j-i},  \quad \mbox{for $j, k \in \Z$.}
\end{equation}
Here $\disp{\binom{j}{i}:=\frac{j(j-1)\ldots (j-i+1)}{i!}}$.
We refer $\cA\blb z^{-1}, \pz^{-1} \brb$ to as the superalgebra of pseudo-differential operators over $\cA$.
For any $\disp{f:=\sum_{j=-\infty}^s \sum_{i=-\infty}^r a_{ij} z^i \pz^j \in \cA\blb z^{-1}, \pz^{-1} \brb}$ and any $\cA$-module $M$, we define
\begin{equation}\label{pdiff}
f(v)=\sum_{j=-\infty}^s \sum_{i=-\infty}^r a_{ij}(v) z^i \pz^j, \qquad\hbox{for all $v\in M$.}
\end{equation}

\section{The Gaudin model of $\glmn$} \label{GM}

In this section, we introduce the Feigin--Frenkel center and the Gaudin algebra for $\glmn$. We discuss the basics of polynomial $\glmn$-modules required in this paper. Moreover, we establish some fundamental properties of the action of the Gaudin algebra of $\glmn$ on the tensor product of irreducible polynomial $\glmn$-modules.

\subsection{The Feigin--Frenkel center} \label{FFC}
For any Lie superalgebra $\fg$, we denote by $U(\fg)$ the universal enveloping algebra of $\fg$.
The loop algebra $\fg[t, t^{-1}]:=\fg\otimes \C[t, t^{-1}]$, where $t$ is an even variable, is a Lie superalgebra with the bracket given by
$$
\big[A_1[r_1], A_2[r_2] \big]=[A_1, A_2] [r_1+r_2] \qquad \mbox{for $A_1, A_2 \in \fg$ and $r_1, r_2 \in \Z$.}
$$
Here $A_i[r_i]:=A_i \otimes t^{r_i}$, and $[A_1, A_2]$ is the supercommutator of $A_1$ and $A_2$.

The general linear Lie superalgebra $\glmn$ is equipped with an invariant symmetric bilinear form $(\cdot, \cdot)$ defined by
$$
(A_1, A_2)=(n-m){\rm Str}(A_1 A_2)+ {\rm Str}(A_1) {\rm Str}(A_2) , \quad \mbox{for $A_1, A_2 \in \glmn$}.
$$
Here ${\rm Str}$ stands for the supertrace, which is defined by ${\rm Str}(E_{i,j})=(-1)^{2i} \de_{i,j}$ for $i, j \in \Imn$.
The affine Lie superalgebra
$$
\wh\gl_{m|n}:=\gl_{m|n}[t,t^{-1}]\oplus \C K,
$$
where $K$ is even and central, is a Lie superalgebra with the bracket given by
$$
\big[A_1[r_1], A_2[r_2] \big]=[A_1, A_2] [r_1+r_2] + r_1 \de_{r_1+r_2, 0} (A_1, A_2) K
$$
for $A_1, A_2 \in \glmn$ and $r_1, r_2 \in \Z$.

The vacuum module $\Vac(\glmn)$ at the critical level is
$$
\Vac(\glmn):=U(\wh\gl_{m|n})/I,
$$
where $I$ is the left ideal of $U(\wh\gl_{m|n})$ generated by $\glmn[t]$ and $K-1$ (see \cite[Section 1.3]{MR}).
Let $V=\Vac(\glmn)$. There is a unique vertex algebra structure on $V$ such that
the vacuum vector is $\vv:=1+ I \in V$,
the translation operator $T \in \End(V)$ is defined by the relations
$$T\vv=0, \qquad \big[T, A[r] \big]=-r A[r-1], \quad \mbox{for $A \in \glmn$ and $r \in \Z$,}$$
where $A[r]$ is considered as an element of $\End(V)$ which acts on $V$ by left multiplication, and
the state-field correspondence $Y(\cdot, z): V  \longrightarrow  \End (V)  [\tns[z, z^{-1}]\tns]$ is defined by
$$
Y(\vv, z)=1, \qquad
Y(A[-1] \vv, z)=\sum_{r \in \Z} A[r]z^{-r-1} , \quad \mbox{for $A \in \glmn$,}
$$
and is extended to all of $V$ by means of the reconstruction theorem (see \cite[Theorem 4.5]{Kac} or \cite[Theorem 4.4.1]{FBZ}).
The vertex algebra $V$ is called the {\em universal affine vertex algebra} associated to $\glmn$ at the critical level.

The center of the vertex algebra $V$ is given by
$$
\fz(\wh\gl_{m|n}):=\setc*{\! v \in V}{\glmn[t] v=0 \!},
$$
which is called the {\em Feigin--Frenkel center}. Any element of $\fz(\wh\gl_{m|n})$ is called a {\em Segal--Sugawara vector}.
It follows from the axioms of a vertex algebra that $\fz(\wh\gl_{m|n})$ is a commutative associative unital algebra and is $T$-invariant.
Let $$\cUm=U(\gmi).$$
By the Poincaré–Birkhoff–Witt theorem, there is a linear isomorphism from $V$ to $\cUm$, where $\vv \in V$ is mapped to $1 \in \cUm$.
As it restricts to an injective linear homomorphism $\fz(\wh\gl_{m|n}) \hookrightarrow \cUm$ which respects multiplication, we may view $\fz(\wh\gl_{m|n})$ as a subalgebra of $\cUm$.
We refer to \cite{Fr07, FBZ, Kac, MR} for further details on the vertex algebra $V$ and the Feigin--Frenkel center.

Let $\tau=-\partial_t$.
Similar to the superalgebra of pseudo-differential operators defined in \secref{pseudo}, we may consider the superalgebra $\cUm \blb \tau^{-1} \brb$ satisfying the rules \eqref{rule} (where $z$ and $\pz$ are replaced with $t$ and $\partial_t$, respectively).
Let $\smn=(0^m, 1^n) \in \cS_{\mn}$ and
$$\cT_{\mn}=\Big[\de_{i,j} \tau+(-1)^{2\pi(i)}E_{\pi(i),\pi(j)}[-1] \Big]_{i,j=1, \ldots, m+n}.$$
The matrix $\cT_{\mn}$ is an amply invertible Manin matrix of type $\smn$ over $\cUm \blb \tau^{-1} \brb$ (see \cite[Lemma 3.1]{MR}).
It is easy to see that $1+u\Tmn$ is an amply invertible Manin matrix of type $\smn$ over $\cUm [\tau] [\tns[u]\tns]$, where $u$ is an even variable.
Thus for $\si \in \Smn$, both $\Tmn^\si$ and $1+u\Tmn^\si$ are amply invertible Manin matrices of type $\smn^\si$.

We suppress the superscript $\smn$, e.g., $\Ber:=\Ber^{\smn}$.
The following is a direct consequence of \propref{HM2}.

\begin{prop} \label{perm-tau}
For each $\si \in \Smn$,
$$
\Ber^{\smn^\si} \! \left( 1+u\Tmn^\si \right)=\Ber  \left(1+u\Tmn \right).
$$
\end{prop}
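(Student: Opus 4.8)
The plan is to read off the identity as a one-step application of \propref{HM2}. Put $A := 1 + u\Tmn$. As observed in the paragraph preceding the statement, $A$ is an amply invertible Manin matrix of type $\smn$ over $\cUm[\tau][\tns[u]\tns]$ (the inverses occurring in the quasideterminants of its standard submatrices are formed by geometric series in $u$, so all principal quasiminors are well defined), and hence the hypotheses of \propref{HM2} are met. So the substance of the argument is just to match the two sides of \propref{HM2} with the two sides of the asserted equality.

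The only point deserving a word of justification is that the permutation action of \secref{Ber} carries $A = 1 + u\Tmn$ to $1 + u\Tmn^\si$. Writing $A = \big[a_{i,j}\big]$ with $a_{i,j} = \de_{i,j} + u(\Tmn)_{i,j}$, the definition $A^\si = \big[a_{\si^{-1}(i),\si^{-1}(j)}\big]$ gives its $(i,j)$ entry as
$$
a_{\si^{-1}(i),\si^{-1}(j)} = \de_{\si^{-1}(i),\si^{-1}(j)} + u(\Tmn)_{\si^{-1}(i),\si^{-1}(j)} = \de_{i,j} + u(\Tmn^\si)_{i,j},
$$
using that $\si^{-1}(i) = \si^{-1}(j)$ if and only if $i = j$, together with the definition of $\Tmn^\si$. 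Thus $A^\si = 1 + u\Tmn^\si$ as matrices over $\cUm[\tau][\tns[u]\tns]$ of type $\smn^\si$ (the latter point also being recorded just before the statement).

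Applying \propref{HM2} to $A$ of type $\smn$ now yields $\Ber^{\smn^\si} A^\si = \Ber^{\smn} A$, that is,
$$
\Ber^{\smn^\si}\!\left(1 + u\Tmn^\si\right) = \Ber\!\left(1 + u\Tmn\right),
$$
which is the assertion. There is essentially no obstacle: the statement is a bookkeeping consequence of \propref{HM2}, the only non-formal ingredient being the identification $A^\si = 1 + u\Tmn^\si$ above.
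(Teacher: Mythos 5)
Your proof is correct and matches the paper's, which simply notes that the identity is a direct consequence of \propref{HM2} applied to the amply invertible Manin matrix $1+u\Tmn$ of type $\smn$. The bookkeeping check that $A^\si = 1+u\Tmn^\si$ is a reasonable detail to spell out, but otherwise the two arguments are identical.
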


The Berezinian $\Ber \left(1+u\Tmn \right)$ encodes a distinguished set of Segal--Sugawara vectors, as shown below.

\begin{prop} [{\cite[Corollary 3.3]{MR}}] \label{MR}
The Berezinian $\Ber \left(1+u\Tmn \right)$ has the expansion
\begin{equation} \label{Ber-tau-u}
\Ber \left(1+u\Tmn \right)=\sum_{i=0}^{\infty} \sum_{j=0}^i b_{ij} \tau^{i-j} u^i  \quad \mbox{for some $b_{ij} \in \zgmn$.}
\end{equation}
\end{prop}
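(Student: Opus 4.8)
This is \cite[Corollary 3.3]{MR}, a Lie-superalgebra version of the Feigin--Frenkel/Chervov--Talalaev construction of Segal--Sugawara vectors. The plan is to prove it in two stages: (i) the shape of the expansion \eqref{Ber-tau-u}, with coefficients a priori lying in $\cUm$; and (ii) that these coefficients are annihilated by $\glmn[t]$, hence lie in $\zgmn$.

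For (i), I would work directly from $\Ber(1+u\Tmn)=\prod_{k=1}^{m+n}d_k(1+u\Tmn)^{\hs_k}$. The $(i,j)$th entry of $1+u\Tmn$ is $\de_{i,j}(1+u\tau)+u(-1)^{2\pi(i)}E_{\pi(i),\pi(j)}[-1]$: it lies in $\cUm[\tau]$, and $\tau$ occurs in it only through the monomial $u\tau$. Since $1+u\Tmn$ differs from the identity matrix only by terms of positive $u$-order, every principal quasiminor $d_k=d_k(1+u\Tmn)$ has the form $1+(\text{positive }u\text{-order})$, hence is invertible in the formal power series ring $\cUm[\tau][[u]]$ (geometric series), so $\Ber(1+u\Tmn)$ is a well-defined element of $\cUm[\tau][[u]]$. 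Expanding each $d_k$ by the Gauss/Schur-complement formula for quasideterminants (the mechanism behind \propref{HM1}) and inducting on $k$, using $[\tau,\cUm]\subseteq\cUm$ — indeed $[\tau,E_{a,b}[-r]]=r\,E_{a,b}[-r-1]$ — to move all $\tau$'s to the right at the price of correction terms of strictly smaller $\tau$-degree, one finds that in each $d_k$, hence in the product $\Ber(1+u\Tmn)$, the coefficient of $u^i$ is a polynomial in $\tau$ of degree at most $i$ with coefficients in $\cUm$. This is \eqref{Ber-tau-u} with $b_{ij}\in\cUm$ the coefficient of $\tau^{i-j}u^i$ (all $b_{ij}$ written to the left of the powers of $\tau$).

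For (ii), use the Poincar\'e--Birkhoff--Witt identification $\Vac(\glmn)\cong\cUm$, under which $b_{ij}\in\zgmn$ amounts to $E_{a,b}[r]\,b_{ij}=0$ for all $a,b\in\Imn$ and all $r\ge 0$. By a standard reduction — using the $T$-action on $\Vac(\glmn)$, the relation $[T,E_{a,b}[r]]=-r\,E_{a,b}[r-1]$, and that the identity current of $\glmn$ acts by zero in non-negative modes (a consequence of the normalization of $(\cdot,\cdot)$) — it suffices to check that the matrices $[E_{a,b}[0]]_{a,b}$ and $[E_{a,b}[1]]_{a,b}$ annihilate each $b_{ij}$. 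The zero-mode case is immediate: the $\glmn[0]$-action on the entries of $1+u\Tmn$ is infinitesimal simultaneous conjugation, under which $1+u\Tmn$ remains a Manin matrix of type $\smn$ and, by \propref{HM2} (and \propref{CFR} on the even subblock), its Berezinian is invariant, so $E_{a,b}[0]\cdot\Ber(1+u\Tmn)=0$. The first-mode case is the crux: one computes the commutators $[E_{a,b}[1],(1+u\Tmn)_{i,j}]$ — which, from the explicit form of the entries, are again scalar multiples of entries of $1+u\Tmn$ together with central terms produced by $(\cdot,\cdot)$ — and propagates this through $\Ber(1+u\Tmn)$ by repeated use of the Manin relations and of Propositions~\ref{HM1}, \ref{HM2} and \ref{CFR}, arriving at $E_{a,b}[1]\,b_{ij}=0$. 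When $n=0$ one has $\Ber(1+u\Tmn)=\cdet(1+u\Tmn)$ by \propref{CFR}, and (ii) is exactly the classical statement that its coefficients lie in $\zgm$ (\cite{FF,CF,CM}); this even case serves as the template for the general computation.

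The main obstacle is this first-mode computation: threading the identity $[E_{a,b}[1],(1+u\Tmn)_{i,j}]=\cdots$ through a Berezinian of a Manin matrix over the noncommutative ring $\cUm[\tau]$ is delicate, and it is precisely here that the Manin property of $\Tmn$ and the special choice of the supertrace form $(\cdot,\cdot)$ — normalized so as to kill the identity current and thereby prevent any anomalous central contribution from surviving — enter in an essential way.
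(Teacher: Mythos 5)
The paper does not prove this statement at all: it is imported verbatim from \cite[Corollary~3.3]{MR}, so there is no internal argument to compare yours against. Judged on its own terms, your stage (i) is sound and essentially complete: the entries of $1+u\Tmn$ have $\tau$-degree bounded by their $u$-degree, the principal quasiminors are units of the form $1+O(u)$ in $\cUm[\tau][\tns[u]\tns]$, and the commutator $[\tau,E_{a,b}[-r]]=rE_{a,b}[-r-1]$ only lowers $\tau$-degree, so the $u^i$-coefficient of $\Bers(1+u\Tmn)$ is indeed a $\tau$-polynomial of degree at most $i$ with left coefficients $b_{ij}\in\cUm$. Your reduction in stage (ii) to the modes $E_{a,b}[0]$ and $E_{a,b}[1]$ is also the correct and standard skeleton, since $\glmn[t]$ is generated by these modes.

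The genuine gap is the one you yourself flag: the verification that $E_{a,b}[1]$ annihilates $\Bers(1+u\Tmn)$ is not a routine propagation of a commutator identity through the quasiminors --- it is the entire mathematical content of the result, and your proposal describes where it should happen without performing it. In particular, ``propagates this through $\Ber(1+u\Tmn)$ by repeated use of the Manin relations and of Propositions~\ref{HM1}, \ref{HM2} and \ref{CFR}'' is a placeholder, not an argument; the quasideterminant factors $d_k^{\hs_k}$ do not individually transform in any manageable way under $E_{a,b}[1]$, and this is precisely why \cite{MR} does not argue this way but instead routes the proof through the MacMahon master theorem for right quantum superalgebras, relating the Berezinian to a complementary permanent-type generating series for which the invariance can be checked. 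Your zero-mode step is also slightly overstated: \propref{HM2} gives invariance under permutations only, not under infinitesimal conjugation by arbitrary $E_{a,b}[0]$, so even that step needs the $\glmn$-invariance of the Berezinian of a Manin matrix, which must be quoted or proved separately. As a reconstruction of the strategy behind the cited result your outline is reasonable; as a proof it is incomplete at its decisive step.
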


Let $\zmn$ be the subalgebra of $\cUm$ generated by the elements $b_{ij}$, for $i,j \in \Zp$ with $j \le i$.
By \propref{MR}, $\zmn$ is a commutative subalgebra of $\zgmn$. It gives rise to the Gaudin algebra of $\glmn$ to be described in \secref{Gaudin}.

We may give another set of generators for $\zmn$ as follows. Define
$$
\Phi: \cUm \blb \tau^{-1} \brb \longrightarrow \cUm [\tau] \blb u \brb
$$
by
$$
\Phi \left(\sum_{i=-\infty}^r a_i \tau^i \right)=\sum_{i=-\infty}^r a_i (\tau+u^{-1})^i, \quad \mbox{for $a_i \in \cUm$ and $r \in \Z$}.
$$
Here $\disp{(\tau+u^{-1})^i:=\sum_{j=0}^\infty \binom{i}{j}  \tau^j u^{j-i}}$.
The map $\Phi$ is an injective superalgebra homomorphism, whose verification is straightforward and parallel to that of \cite[Lemma 4.1]{HM}.
We also have the expansion
\begin{equation} \label{Ber-tau}
\Ber \big(\Tmn \big)=\sum_{k=-\infty}^{m-n}b_k \tau^k \quad \mbox{for some $b_k \in \cUm$.}
\end{equation}

\begin{prop} \label{generator}
 The algebra $\zmn$ is generated by $b_k$ for $k \in \Z$ with $k \le m-n$. Moreover,
\begin{equation} \label{b_ij}
b_{ij}=\binom{m-n-j}{i-j} b_{m-n-j}, \qquad \mbox{for $i,j \in \Zp$ with $j \le i$}.
\end{equation}
\end{prop}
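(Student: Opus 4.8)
The plan is to compare the two generating series through the homomorphism $\Phi$: I claim that $\Phi$ carries $\Ber(\Tmn)$ onto $\Ber(1+u\Tmn)$ up to an overall power of $u$, so that \eqref{b_ij} follows by matching coefficients of monomials in $\tau$ and $u$, and the generation statement is then immediate.

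First I would apply $\Phi$ entrywise to $\Tmn$. Since $\Phi$ is the identity on $\cUm$, is $\C$-linear, and sends $\tau$ to $\tau+u^{-1}$, the result is $\Phi(\Tmn)=\Tmn+u^{-1}$, i.e., $\Tmn$ plus $u^{-1}$ times the identity matrix. Being a superalgebra homomorphism, $\Phi$ carries an invertible element $a$ to the invertible element $\Phi(a)$, with $\Phi(a)^{-1}=\Phi(a^{-1})$; hence it commutes with all the inversions occurring in quasideterminants, it preserves sufficient invertibility, and $d_i(\Phi(A))=\Phi(d_i(A))$ for every sufficiently invertible $A$. As $\Ber$ (of type $\smn$) is the product $d_1^{\hs_1}\cdots d_{m+n}^{\hs_{m+n}}$, this yields $\Phi(\Ber(\Tmn))=\Ber(\Tmn+u^{-1})$. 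Next, $u$ is central and even, so $\Tmn+u^{-1}=u^{-1}(1+u\Tmn)$ with $1+u\Tmn$ amply invertible; from $d_i(cA)=c\,d_i(A)$ for central invertible $c$ one obtains $\Ber(cA)=c^{\,\hs_1+\cdots+\hs_{m+n}}\Ber(A)$, the exponent being $m-n$ for $\smn=(0^m,1^n)$. Taking $c=u^{-1}$,
\[
\Phi\big(\Ber(\Tmn)\big)=\Ber\big(\Tmn+u^{-1}\big)=u^{\,n-m}\,\Ber\big(1+u\Tmn\big).
\]

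Then I would expand both sides. By \eqref{Ber-tau} and the definition of $\Phi$,
\[
\Phi\big(\Ber(\Tmn)\big)=\sum_{k\le m-n}b_k\,(\tau+u^{-1})^k=\sum_{k\le m-n}\sum_{p\ge 0}\binom{k}{p}\,b_k\,\tau^{p}u^{p-k},
\]
whereas by \eqref{Ber-tau-u},
\[
u^{\,n-m}\,\Ber\big(1+u\Tmn\big)=\sum_{i\ge 0}\sum_{j=0}^{i}b_{ij}\,\tau^{\,i-j}u^{\,i-m+n}.
\]
The monomials occurring are $\tau^{p}u^{p-k}$ on the left and $\tau^{\,i-j}u^{\,i-m+n}$ on the right, so comparison of coefficients (with $p=i-j$ and $k=m-n-j$, which does satisfy $k\le m-n$ because $j\ge 0$) gives $b_{ij}=\binom{m-n-j}{i-j}\,b_{m-n-j}$, which is \eqref{b_ij}. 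Specializing to $i=j$ yields $b_{jj}=b_{m-n-j}$, so $b_k=b_{(m-n-k)(m-n-k)}\in\zmn$ for every $k\le m-n$; conversely, \eqref{b_ij} presents each generator $b_{ij}$ of $\zmn$ as a scalar multiple of $b_{m-n-j}$. Hence $\zmn$ is generated by $\{\,b_k : k\in\Z,\, k\le m-n\,\}$.

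The step requiring the most care is the commutation $\Phi\circ\Ber=\Ber\circ\Phi$: one must verify that $\Phi$ respects every inversion and product hidden in the definitions of quasideterminant, principal quasiminor, and ``sufficiently invertible'' matrix, together with the scaling identity $\Ber(cA)=c^{\,m-n}\Ber(A)$ for the central, even, invertible scalar $c=u^{-1}$. All of this is a formal consequence of $\Phi$ being a superalgebra homomorphism (parallel to \cite[Lemma 4.1]{HM}, as already noted in the text); the remaining steps are routine manipulations of binomial coefficients.
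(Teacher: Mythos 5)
Your proof is correct and follows essentially the same route as the paper's: both identify $\Ber(1+u\Tmn)$ with $u^{m-n}\,\Phi(\Ber(\Tmn))$ by applying $\Phi$ entrywise (using that $1+u\Tmn=u(\Tmn+u^{-1})$ and the scaling of the Berezinian by the central even unit $u$), and then match coefficients of $\tau^{i-j}u^i$ to obtain \eqref{b_ij}, from which the generation statement is immediate via $b_{jj}=b_{m-n-j}$. The only difference is that you spell out more explicitly why $\Phi$ commutes with the quasideterminants and the resulting Berezinian, a point the paper asserts without elaboration.
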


\begin{proof}
We have
\begin{eqnarray*}
\Ber  \! \left(1+u\Tmn \right) \ns \nns &=& \nns u^{m-n} \, \Ber \! \left(  \left[\de_{i,j} (\tau+u^{-1})+(-1)^{2\pi(i)}E_{\pi(i),\pi(j)}[-1] \right]_{i,j=1, \ldots, m+n}\right)\\
&=& \nns u^{m-n} \, \Ber \! \left(  \left[ \Phi \left(\de_{i,j} \tau+(-1)^{2\pi(i)}E_{\pi(i),\pi(j)}[-1] \right) \right]_{i,j=1, \ldots, m+n} \right)\\
&=& \nns u^{m-n} \, \Phi \! \left( \Ber \big(\Tmn \big) \right).
\end{eqnarray*}
Equating the coefficients of $\tau^{i-j} u^i$ in $\Ber  \! \left(1+u\Tmn \right)$ and $u^{m-n} \, \Phi \! \left( \Ber \big(\Tmn \big) \right)$ yields \eqref{b_ij} and proves the proposition.
\end{proof}

\begin{rem}
The equalities \eqref{b_ij} are known for non-super cases (see, for example, \cite[(7.12)]{Mo}).
\end{rem}

Analogous to \propref{perm-tau}, we have the following.

\begin{prop} \label{perm-tau-2}
For each $\si \in \Smn$,
$$
\Ber^{\smn^\si} \! \left(\Tmn^\si \right)=\Ber  \left(\Tmn \right).
$$
\end{prop}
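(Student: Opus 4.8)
The statement to prove is \propref{perm-tau-2}: for each $\si \in \Smn$, $\Ber^{\smn^\si}(\Tmn^\si) = \Ber(\Tmn)$. This is the $u$-free analogue of \propref{perm-tau}, which was deduced directly from \propref{HM2}. The natural plan is to \emph{reduce it to \propref{perm-tau}} rather than re-prove a permutation invariance from scratch.

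The key observation is the identity established inside the proof of \propref{generator}, namely
$$
\Ber\!\left(1+u\Tmn\right) = u^{m-n}\,\Phi\!\left(\Ber(\Tmn)\right),
$$
and its permuted version. First I would record that exactly the same computation, applied to $\Tmn^\si$ in place of $\Tmn$ (using that $\Tmn^\si$ is an amply invertible Manin matrix of type $\smn^\si$, and that $\smn^\si$ still has $m$ zeros and $n$ ones so the leading power of $u$ is again $u^{m-n}$), gives
$$
\Ber^{\smn^\si}\!\left(1+u\Tmn^\si\right) = u^{m-n}\,\Phi\!\left(\Ber^{\smn^\si}(\Tmn^\si)\right).
$$
Here one must note that $\Phi$ acts entrywise on the diagonal shift $\de_{i,j}(\tau+u^{-1})$ regardless of the ordering of the rows/columns, because the only thing $\si$ does is permute which index carries which parity $\smn^\si_i$, and the diagonal entries are unchanged by conjugation; so the factorization through $\Phi$ is genuinely $\si$-equivariant.

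Now combine with \propref{perm-tau}: the left-hand sides of the two displayed identities are equal, hence $u^{m-n}\Phi(\Ber^{\smn^\si}(\Tmn^\si)) = u^{m-n}\Phi(\Ber(\Tmn))$, and cancelling the unit $u^{m-n}$ gives $\Phi(\Ber^{\smn^\si}(\Tmn^\si)) = \Phi(\Ber(\Tmn))$. Since $\Phi$ is an injective superalgebra homomorphism (as noted in the paragraph preceding \propref{generator}), we conclude $\Ber^{\smn^\si}(\Tmn^\si) = \Ber(\Tmn)$, which is the claim. Equivalently, one could just compare coefficients of $\tau^k$ on both sides after stripping $u^{m-n}$ and using that $\Phi$ sends $\tau^k \mapsto \sum_j \binom{k}{j}\tau^j u^{j-k}$ with the top term ($j=k$) recovering $b_k$.

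I do not expect a serious obstacle here; this is essentially a bookkeeping argument. The one point that needs a line of care is checking that the entrywise application of $\Phi$ commutes with the permutation $\si$ — i.e. that $\left[\Phi(\de_{i,j}\tau + (-1)^{2\pi(i)}E_{\pi(i),\pi(j)}[-1])\right]^\si = \left[\Phi\big((\Tmn^\si)_{ij}\big)\right]$ — but this is immediate because $\Phi$ is applied coefficient-wise to each matrix entry and the reindexing $(i,j)\mapsto(\si^{-1}(i),\si^{-1}(j))$ is purely formal. Alternatively, if one prefers to avoid invoking the proof of \propref{generator}, the whole statement also follows by specializing $u$ appropriately, but routing through $\Phi$ and \propref{perm-tau} is the cleanest.
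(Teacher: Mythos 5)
Your argument is correct, but it takes a more roundabout route than the paper intends. The paper's (implicit) proof of \propref{perm-tau-2} is the same one-line argument as for \propref{perm-tau}: since $\Tmn$ is an amply invertible Manin matrix of type $\smn$ over $\cUm\blb\tau^{-1}\brb$ (this is exactly \cite[Lemma 3.1]{MR}, quoted just before \propref{perm-tau}), \propref{HM2} applies verbatim to $\Tmn$ itself, with no need to pass through the $u$-deformed matrix at all. You instead deduce the statement from \propref{perm-tau} by running the factorization $\Ber(1+u\Tmn)=u^{m-n}\Phi(\Ber(\Tmn))$ from the proof of \propref{generator} in both the permuted and unpermuted settings, then cancelling $u^{m-n}$ and invoking injectivity of $\Phi$. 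This is logically sound and non-circular (\propref{perm-tau} and the proof of \propref{generator} do not depend on \propref{perm-tau-2}), and you correctly flag the two points that need checking: that $\smn^\si$ still contains $m$ zeros and $n$ ones, so the prefactor is again $u^{m-n}$, and that applying $\Phi$ entrywise commutes with forming quasiminors and hence Berezinians (which holds for any unital algebra homomorphism, since $\Phi(A)^{-1}=\Phi(A^{-1})$ entrywise). The trade-off is purely one of economy: your reduction costs an extra layer of bookkeeping and buys nothing that the direct appeal to \propref{HM2} does not already give, which is presumably why the paper dispatches the proposition with the single word ``analogous.''
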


\subsection{The Gaudin algebra of $\glmn$} \label{Gaudin}

Fix $\ell \in \N$. For any even variable $z$, let $$\Ulz =\Ul\blb z^{-1} \brb.$$
Let
$$
\Xl=\setc*{\! (z_1, \ldots,z_\ell)\in \C^\ell}{z_i\not=z_j \,\, \mbox{for any $i\not=j$} \!}
$$
be the \emph{configuration space} of $\ell$ distinct points on $\C^\ell$, and let $\uz \in \Xl$.
There is a superalgebra homomorphism
$$
 \Psi_{\tns \uz} : \cUm[\tau]  \longrightarrow \Ulz  [\pz]
$$
given by
 $$
  \Psi_{\tns \uz}(A[-r])=\sum_{i=1}^\ell \frac{A^{(i)}}{(z_i-z)^{r}}, \quad \mbox{for $A \in \glmn \,$ and $\, r \in \N$,}  \quad {\rm and} \quad  \Psi_{\tns \uz}(\tau)=\pz.
 $$
 Hereafter $A^{(i)}:=\underbrace{1\otimes\cdots\otimes1\otimes \stackrel{i}{A}\otimes1\otimes\cdots\otimes1}_{\ell}$ for $i=1, \ldots, \ell$, and every rational function in $z$ represents its power series expansion at $\infty$.
 The map $ \Psi_{\tns \uz}$ extends to a superalgebra homomorphism
$$
\tilde  \Psi_{\tns \uz} : \cUm[\tau] [\tns[u]\tns]  \longrightarrow \Ulz  [\pz] [\tns[u]\tns]
$$
by letting $\tilde  \Psi_{\tns \uz} (u)=u$.

Define
$$
A(z)=\sum_{i=1}^\ell \frac{A^{(i)}}{z-z_i} \in \Ulz , \qquad \mbox{for $A \in \glmn$}.
$$
We have $ \Psi_{\tns \uz}(A[-1])=-A(z)$ for $A \in \glmn$.
Consider the $(m+n) \times (m+n)$ matrix
\begin{eqnarray*}
\cLmn(\uz) \nns &:=& \nns \Big[  \Psi_{\tns \uz} \Big(\de_{i,j} \tau+(-1)^{2\pi(i)}E_{\pi(i),\pi(j)}[-1] \Big) \Big]_{i,j=1, \ldots, m+n}\\
\nns &\,\, =& \nns \Big[\de_{i,j} \pz-(-1)^{2\pi(i)}E_{\pi(i),\pi(j)}(z)\Big]_{i,j=1, \ldots, m+n},
\end{eqnarray*}
which is clearly an amply invertible Manin matrix of type $\smn$ over $\Ul \blb z^{-1}, \pz^{-1} \brb$.
We see that
$
\Ber \big( 1+u \cLmn(\uz) \big)=\tilde \Psi_{\tns \uz} \left(\Ber \big( 1+u\Tmn \big) \right).
$
In view of \eqref{Ber-tau-u},
$$
\Ber \big(1+u\cLmn(\uz) \big) =  \sum_{i=0}^{\infty} \sum_{j=0}^i b_{ij}(z) \pz^{i-j} u^i ,
$$
where $b_{ij}(z):=  \Psi_{\tns \uz}(b_{ij}) \in \Ulz$. The series $b_{ij}(z)$ are called {\em Gaudin Hamiltonians} for $\glmn$.

Let $\Bmn$ be the subalgebra of $\Ul$ generated by the coefficients of the Gaudin Hamiltonians $b_{ij}(z)$, for $i,j \in \Zp$ with $j \le i$.
By \cite[Corallary 3.6]{MR}, $\Bmn$ is a commutative subalgebra of $\Ul$.
The algebra $\Bmn$ is called the {\em Gaudin algebra} of $\glmn$.

The map $\Psi_{\tns \uz}$ also extends to a superalgebra homomorphism
$$\ov\Psi_{\tns \uz}: \cUm \blb \tau^{-1} \brb  \longrightarrow \Ul \blb z^{-1}, \pz^{-1} \brb$$
given by
$$
\ov\Psi_{\tns \uz} \left(\sum_{i=-\infty}^r a_i \tau^i \right)=\sum_{i=-\infty}^r \Psi_{\tns \uz}(a_i) \pz^i, \qquad \mbox{for $a_i \in \cUm$ and $r \in \Z$}.
$$
We have $\Ber \big(\cLmn(\uz) \big) =\ov\Psi_{\tns \uz} \left( \Ber \big(\Tmn \big) \right)$.
By \eqref{Ber-tau},
\begin{equation} \label{Ber-exp}
\Ber \big(\cLmn(\uz) \big) =\sum_{k=-\infty}^{m-n}b_k (z)\pz^k,
\end{equation}
where $b_k(z):= \Psi_{\tns \uz}(b_k)  \in \Ulz$.
\propref{generator} implies the following (cf. \cite[Proposition 4.4]{HM}).

\begin{prop} \label{Bmn-gen}
The Gaudin algebra $\Bmn$ is generated by the coefficients of the series $b_k(z)$ for $k \in \Z$ with $k \le m-n$.
\end{prop}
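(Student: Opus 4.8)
The plan is to transport \propref{generator} across the homomorphism $\ov\Psi_{\tns \uz}$. Recall that \propref{generator} asserts two things about the algebra $\zmn \subseteq \cUm$: first, that $\zmn$ is generated by the coefficients $b_k$ (for $k \le m-n$) of $\Ber(\Tmn) = \sum_{k \le m-n} b_k \tau^k$; and second, the explicit relations $b_{ij} = \binom{m-n-j}{i-j} b_{m-n-j}$ expressing the generators $b_{ij}$ of $\zmn$ from \propref{MR} in terms of the $b_k$. Since $\Bmn$ is by definition generated by the coefficients of the Gaudin Hamiltonians $b_{ij}(z) = \Psi_{\tns \uz}(b_{ij})$, and since $b_k(z) = \Psi_{\tns \uz}(b_k)$, applying the superalgebra homomorphism $\Psi_{\tns \uz}$ (equivalently $\ov\Psi_{\tns \uz}$) to \eqref{b_ij} immediately yields
\begin{equation*}
b_{ij}(z) = \binom{m-n-j}{i-j}\, b_{m-n-j}(z), \qquad \text{for } i,j \in \Zp \text{ with } j \le i.
\end{equation*}

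From here the argument is short. Every $b_{ij}(z)$ is a scalar multiple of some $b_k(z)$ with $k = m-n-j \le m-n$, so the coefficients of all the $b_{ij}(z)$ lie in the subalgebra generated by the coefficients of the $b_k(z)$, $k \le m-n$; hence $\Bmn$ is contained in that subalgebra. Conversely, taking $j = m-n-k$ (which is a nonnegative integer precisely when $k \le m-n$) and $i = j$ gives $b_{jj}(z) = b_k(z)$, so each $b_k(z)$ with $k \le m-n$ is itself a Gaudin Hamiltonian and its coefficients lie in $\Bmn$. The two inclusions give the claim. This mirrors \cite[Proposition 4.4]{HM}, as noted.

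There is essentially no obstacle here: the content is entirely in \propref{generator}, whose proof was already given via the intertwining identity relating $\Ber(1+u\Tmn)$ and $u^{m-n}\Phi(\Ber(\Tmn))$. The only point requiring a word of care is the bookkeeping with binomial coefficients: one must check that the map $k \mapsto (i,j) = (m-n-k,\, m-n-k)$ lands in the index set $\{(i,j) : j \le i\}$ and that the corresponding binomial coefficient $\binom{m-n-j}{i-j} = \binom{0}{0} = 1$ is nonzero, so that $b_k(z)$ is genuinely recovered (not annihilated). Since $\binom{m-n-j}{0} = 1$ for every $j$, this is automatic, and the proof goes through verbatim after applying $\Psi_{\tns \uz}$.
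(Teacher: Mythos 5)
Your proposal is correct and matches the paper's (implicit) argument: the paper gives no written proof, simply asserting that \propref{generator} implies the statement, and your two-inclusion argument via applying $\Psi_{\tns\uz}$ to \eqref{b_ij} — noting $b_{jj}(z)=b_{m-n-j}(z)$ for the reverse inclusion — is exactly the intended filling-in of that assertion.
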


We also call the series $b_k(z)$ Gaudin Hamiltonians for $\glmn$.
For any $\Bmn$-module $V$, the image of $\Bmn$ in $\End(V)$ is called the {\em Gaudin algebra} of $V$ and is denoted by $\Bmn_V$. Thanks to \propref{Bmn-gen}, to study the algebra $\Bmn_V$, it suffices to examine the action of the Berezinian $\Ber \left(\cLmn(\uz) \right)$ on $V$.

Similar to \propref{perm-tau} and \propref{perm-tau-2}, we obtain the following.

\begin{prop} \label{perm}
For each $\si \in \Smn$,
\begin{eqnarray*}
\Ber^{\smn^\si} \! \left( 1+u\cLmn(\uz)^\si \right)\ns \nns &=& \nns \Ber \big(1+u\cLmn(\uz) \big),
\\
\Ber^{\smn^\si} \! \left( \cLmn(\uz)^\si \right)\ns \nns &=& \nns \Ber \big(\cLmn(\uz) \big).
\end{eqnarray*}
\end{prop}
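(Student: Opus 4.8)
The plan is to deduce both identities from \propref{HM2}, in the same way that \propref{perm-tau} and \propref{perm-tau-2} were obtained from it in the $\tau$-picture; there is essentially nothing new to do beyond checking that the relevant matrices over $\cA := \Ul\blb z^{-1}, \pz^{-1}\brb$ (and over $\cA\blb u\brb$) are amply invertible Manin matrices of type $\smn$.

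First I would record the elementary identities $\cLmn(\uz)^\si = \big[(\cLmn(\uz))_{\si^{-1}(i),\si^{-1}(j)}\big]$ and $\big(1+u\cLmn(\uz)\big)^\si = 1+u\cLmn(\uz)^\si$ for $\si \in \Smn$. Since $\cLmn(\uz)$ was already observed, right after its definition, to be an amply invertible Manin matrix of type $\smn$ over $\cA$, applying \propref{HM2} with $A = \cLmn(\uz)$ immediately yields $\Ber^{\smn^\si}\big(\cLmn(\uz)^\si\big) = \Ber\big(\cLmn(\uz)\big)$, which is the second identity. For the first identity I would check that $1+u\cLmn(\uz)$ is an amply invertible Manin matrix of type $\smn$ over $\cA\blb u\brb$; this transports from the known fact that $1+u\Tmn$ has this property over $\cUm[\tau]\blb u\brb$ via the superalgebra homomorphism $\tilde\Psi_{\uz}$, which preserves parities, the Manin relations, and two-sided inverses of matrices, hence all principal quasiminors of all standard submatrices. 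Then \propref{HM2} applied to $A = 1+u\cLmn(\uz)$ gives $\Ber^{\smn^\si}\big(1+u\cLmn(\uz)^\si\big) = \Ber\big(1+u\cLmn(\uz)\big)$.

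An alternative, and perhaps cleaner, route is to push the identities of \propref{perm-tau} and \propref{perm-tau-2} forward along $\tilde\Psi_{\uz}$ and $\ov\Psi_{\uz}$ respectively, using $\tilde\Psi_{\uz}\big(1+u\Tmn^\si\big) = 1+u\cLmn(\uz)^\si$ together with the fact --- already used in this subsection to write $\Ber\big(1+u\cLmn(\uz)\big) = \tilde\Psi_{\uz}\big(\Ber(1+u\Tmn)\big)$ and $\Ber\big(\cLmn(\uz)\big) = \ov\Psi_{\uz}\big(\Ber(\Tmn)\big)$ --- that a superalgebra homomorphism commutes with the formation of Berezinians. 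The only point deserving explicit verification, and the closest thing to an obstacle here, is precisely this compatibility of $\tilde\Psi_{\uz}$ and $\ov\Psi_{\uz}$ with quasideterminants --- equivalently, that they carry sufficiently (resp.\ amply) invertible Manin matrices of type $\smn$ to matrices of the same kind; this is purely formal, since these maps are homomorphisms of superalgebras, and it has in effect already been invoked. With that in hand, the proposition is a one-line consequence of \propref{HM2}.
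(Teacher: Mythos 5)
Your proposal is correct and follows the paper's own route: the paper derives this proposition, like Propositions \ref{perm-tau} and \ref{perm-tau-2}, as a direct consequence of Proposition \ref{HM2}, having already noted that $\cLmn(\uz)$ (and hence $1+u\cLmn(\uz)$) is an amply invertible Manin matrix of type $\smn$. Your extra verification that $\tilde\Psi_{\uz}$ and $\ov\Psi_{\uz}$, being unital superalgebra homomorphisms, preserve two-sided inverses and hence quasiminors and Berezinians is exactly the routine check the paper leaves implicit.
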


We will again drop the symbol $|0$ from the subscript $m|0$. For example, $\cL_m(\uz):=\cL_{m|0}(\uz)$, $\Bm:=\fB_{m|0}(\uz)$, etc.
The Gaudin algebra $\fB_{m}(\uz)$ of $\gl_m$ is determined by $\cdet(\cL_{m}(\uz))$ in view of \propref{CFR}. This matches the original definition in the non-super setting (\cite{MTV06}).

\subsection{Polynomial modules}

In this subsection, we summarize some basic properties of polynomial modules.
A partition $\la=(\la_1,\la_2,\ldots)$ is called an {\em $(m|n)$-hook partition} if $\la_{m+1} \le n$. Let $\Pmn$ be the set of $(m|n)$-hook partitions.
For any $\la \in \Pmn$, let
$$\ovlamn=\sum_{i=1}^{m}\la_i\ep_i+\sum_{i=1}^{n} \left\langle \la_i^\prime-m \right\rangle\ep_{i-\hf}\in \hmn^*$$
and
$$
\Xmn=\setc*{\! \ovlamn \in \hmn^*}{\la \in \Pmn \!},
$$
whose elements are dominant weights with respect to $\bmn$.
Hereafter, $\langle r \rangle:={\rm max}\{r, 0 \}$ and $\la^\prime=(\la_1^\prime, \la_2^\prime, \ldots)$ denotes the conjugate partition of $\la$.

Define
\begin{equation}\label{weight}
\Ximn=\sum_{i \in \Imn} \Z_+ \ep_i.
\end{equation}
For $\xi \in \hmn^*$, let $\Lmn(\xi)$ denote the irreducible highest weight $\glmn$-module with highest weight $\xi$ with respect to the standard Borel subalgebra $\bmn$.
A $\glmn$-module $M$ is called a {\em polynomial module} if $M$ is $\fh_{\mn}$-semisimple and every weight of $M$ belongs to $\Ximn$.
Let
\begin{equation*}
\Ximn(\ov\varepsilon):=
\setc[\Bigg]{\! \mu\in \Ximn }{ \sum_{i \in \Imn \cap\hf+\Z_+}\mu(\Eii)\equiv \varepsilon \,\,(\text{mod }2) \!},\qquad \hbox{ for $\varepsilon= 0, 1.$}
\end{equation*}
By the description of the positive root system of $\glmn$, every polynomial module $M$ admits a natural $\Z_2$-gradation on $M={M}_{\ov{0}}\bigoplus {M}_{\ov{1}}$, where
$$
{M}_{\ov\varepsilon}:=\bigoplus_{\mu \in \Ximn(\ov\varepsilon)}{M}_{\mu}, \qquad \hbox{ for $\varepsilon= 0, 1.$}
$$
The $\Z_2$-gradation is compatible with the action of  $\glmn$.

Let $\Cmn$ denote the category of polynomial $\glmn$-modules. The morphisms in $\Cmn$ are $\glmn$-homomorphisms.
The following is well known (see, for example, \cite[Theorem 3.26 and Theorem 3.27]{CW}) and also the proof of \cite[Theorem 6.2.2]{Lus}).

\begin{prop}\label{Cmn}
The category $\Cmn$ is a semisimple tensor category, and each polynomial $\glmn$-module $M$ decomposes into a direct sum of irreducible $\glmn$-modules of the forms $\Lmn(\xi)$ with $\xi \in \Xmn$.
 \end{prop}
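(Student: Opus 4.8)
The plan is to derive the whole statement from Schur--Sergeev duality applied to the tensor powers of the natural module $V:=\C^{\mn}$. Throughout I would use that $V$ is polynomial (its weights are $\ep_1,\ldots,\ep_m,\ep_{\hf},\ldots,\ep_{n-\hf}\in\Ximn$) and that $\Ximn$ is closed under addition. For $\mu\in\Ximn$ set $|\mu|:=\sum_{i\in\Imn}\mu(\Eii)\in\Zp$; since every root $\ep_i-\ep_j$ has $|\ep_i-\ep_j|=0$, any polynomial module $M$ splits into $\glmn$-submodules $M=\bigoplus_{d\ge 0}M^{(d)}$, where $M^{(d)}$ is the sum of the weight spaces $M_\mu$ with $|\mu|=d$; only finitely many weights of $\Ximn$ have a prescribed degree $d$. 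As for the categorical formalities: a subquotient of an $\hmn$-semisimple module with weights in $\Ximn$ is again of this form, arbitrary direct sums of such modules are again of this form, and for $M,N\in\Cmn$ the weights of $M\otimes N$ lie in $\Ximn+\Ximn=\Ximn$; hence $\Cmn$ is an abelian full subcategory of $\glmn\text{-mod}$ closed under direct sums and under the tensor product, with unit the trivial module. So once semisimplicity and the list of simple objects are established, $\Cmn$ is a semisimple tensor category.

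Next I would treat the case $M=V^{\otimes d}$, which is the crux of the semisimplicity. Let $\mf{S}_d$ act on $V^{\otimes d}$ by super-permutation of the factors. Schur--Sergeev duality asserts that the images of $U(\glmn)$ and of $\C\mf{S}_d$ in $\End(V^{\otimes d})$ are mutual centralizers; writing $\cS(m|n,d)$ for the image of $U(\glmn)$, this gives $\cS(m|n,d)=\End_{\mf{S}_d}(V^{\otimes d})$. Since $\C\mf{S}_d$ is semisimple in characteristic zero, $V^{\otimes d}$ is a semisimple $\C\mf{S}_d$-module, so its endomorphism superalgebra $\cS(m|n,d)$ is semisimple; moreover $V^{\otimes d}\cong\bigoplus_{\la}\Lmn(\ovlamn)\otimes S^\la$, the sum being over $(m|n)$-hook partitions $\la$ of $d$ with $S^\la$ the corresponding irreducible $\mf{S}_d$-module. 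In particular the simple $\cS(m|n,d)$-modules are exactly the $\Lmn(\ovlamn)$ with $\la\in\Pmn$ and $|\la|=d$, each of which lies in $\Cmn$ with $\ovlamn\in\Xmn$.

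Finally I would pass from $V^{\otimes d}$ to an arbitrary $M\in\Cmn$. By the degree decomposition it is enough to treat $M=M^{(d)}$, and the point is to identify ``polynomial module of degree $d$'' with ``$\cS(m|n,d)$-module'': one must show that the $U(\glmn)$-action on any $\hmn$-semisimple module all of whose weights have degree $d$ kills the kernel of $U(\glmn)\twoheadrightarrow\cS(m|n,d)$, i.e. factors through the Schur superalgebra. Granting this, $M^{(d)}$ is a module over the semisimple superalgebra $\cS(m|n,d)$, hence a (possibly infinite) direct sum of copies of the simple modules $\Lmn(\ovlamn)$ with $\la\in\Pmn$, $|\la|=d$; summing over $d$ yields the decomposition of $M$ claimed in the proposition and, combined with the first paragraph, shows that $\Cmn$ is a semisimple tensor category. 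The step I expect to be the main obstacle is precisely this identification of polynomial modules with modules over the Schur superalgebra — the ``polynomiality'' statement — which requires extra care in the super setting because of the Koszul signs in the $\mf{S}_d$-action; this is exactly what is supplied by \cite[Theorem 3.26 and Theorem 3.27]{CW} (cf. the proof of \cite[Theorem 6.2.2]{Lus}), and I would invoke those results rather than reprove them.
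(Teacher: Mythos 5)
Your proposal is correct and follows essentially the same route as the paper, which offers no independent argument but simply cites \cite[Theorems 3.26 and 3.27]{CW} and the proof of \cite[Theorem 6.2.2]{Lus}; your outline is exactly the standard Schur--Sergeev duality argument underlying those citations, and you defer the one genuinely delicate step (that a degree-$d$ polynomial module factors through the Schur superalgebra) to the very same references.
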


It is also well known that every irreducible $\glmn$-module in $\Cmn$ is finite-dimensional (see, for instance, \cite[Proposition 2.2]{CW}). The following lemma can be obtained easily by the weights of polynomial modules described in \eqnref{weight}.

\begin{lem}\label{weight=0}
 Let $M, N \in \Cmn$, and let $\mu$ and $\ga$ be weights of $M$ and $N$, respectively. Then
$$
(\mu+\ga)(\Eii)=0\quad\hbox{if and only if }\quad \mu(\Eii)=0\,\, \hbox{and}\,\, \ga(\Eii)=0,\qquad
\hbox{for $i\in \Imn$.}
$$
\end{lem}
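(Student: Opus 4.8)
The plan is to deduce the lemma from the single fact that the weights of a polynomial module all have non-negative integer coordinates in the $\ep_i$-basis. First I would invoke the definition of a polynomial $\glmn$-module: every weight of $M$, and every weight of $N$, lies in $\Ximn=\sum_{i\in\Imn}\Zp\ep_i$ as recorded in \eqnref{weight}. Since $\{\ep_i \mid i \in \Imn\}$ is the dual basis to $\{\Eii \mid i \in \Imn\}$, this says precisely that $\mu(\Eii)\in\Zp$ and $\ga(\Eii)\in\Zp$ for every $i\in\Imn$.

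With this in hand both implications are immediate. For the ``if'' direction, $\mu(\Eii)=\ga(\Eii)=0$ gives $(\mu+\ga)(\Eii)=\mu(\Eii)+\ga(\Eii)=0$. For the ``only if'' direction, fix $i\in\Imn$ with $(\mu+\ga)(\Eii)=0$; then $\mu(\Eii)+\ga(\Eii)=0$ is a sum of two elements of $\Zp$ that vanishes, so each summand is $0$.

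There is essentially no obstacle here: the whole content is the observation from \eqnref{weight} that polynomial weights have non-negative integer $\ep_i$-coordinates, combined with the additivity $(\mu+\ga)(\Eii)=\mu(\Eii)+\ga(\Eii)$ of evaluation at $\Eii\in\hmn$. (This additivity is also the reason the lemma will later be applied to tensor products: if $v\in M_\mu$ and $w\in N_\ga$, then $\Eii(v\otimes w)=(\mu(\Eii)+\ga(\Eii))\,v\otimes w$, so the relevant $\Eii$-eigenvalue on $M\otimes N$ is $\mu(\Eii)+\ga(\Eii)$.) The only mild care needed is to recall that $\Zp$ here denotes the non-negative integers, so that a vanishing sum of two such elements forces both to vanish — a step that would fail over $\Z$.
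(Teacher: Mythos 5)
Your argument is correct and is exactly the one the paper intends: the lemma is stated there with the remark that it follows easily from the description of weights in \eqnref{weight}, i.e., that every weight of a polynomial module has non-negative integer $\ep_i$-coordinates, so a vanishing sum of two such coordinates forces both to vanish. Nothing further is needed.
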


For any $\si \in \fS_{m+n}$, we endow a new total order $<_\si$ on $\Imn$ given by
$$
i <_\si j \qquad \mbox{if $\quad \pi\si \pi^{-1}(i)< \pi\si \pi^{-1}(j)$.}
$$
Let $\bmn^{\si}$ denote the Borel subalgebra of $\glmn$ corresponding to the ordering $<_\si$. That is,
$$
\bmn^{\si}:=\bigoplus_{\substack{ i,j \in \mathbb{I}_{\mn}, \, i \le_\si j} }  \C E_{i, j}.
$$
The Borel subalgebras $\bmn^{\si}$ and $\bmn$ share the same Cartan subalgebra $\hmn$.
For any $\glmn$-module $M$, the {\em $\si$-singular space} of $M$ (with respect to $\bmn^\si$) is defined as
$$
M^{\si\mbox{-}\sing}=\setc*{\! v \in M }{E_{i, j}  v=0 \mbox{\, for all\, $i, j \in \Imn $ with $i<_\si j$}\!}.
$$
Any nonzero vector in $M^{\si\mbox{-}\sing}$ is called a $\si$-singular vector.
For any weight $\mu$ of $M$, we let $M^{\si\mbox{-}\sing}_\mu=M_\mu \cap M^{\si\mbox{-}\sing}$.
If $M^{\si\mbox{-}\sing}_\mu \not=0$, then $\mu$ is called a $\si$-singular weight of $M$, and $M^{\si\mbox{-}\sing}_\mu$ is called the $\si$-singular weight space of $M$ of ($\si$-singular) weight $\mu$.
If $\si$ is the identity, we set $M^\sing=M^{\si\mbox{-}\sing}$ and $M^\sing_\mu=M^{\si\mbox{-}\sing}_\mu$ and replace any ``$\si$-singular" with ``singular".

For $\xi \in \hmn^{*}$, let $\Lmn^\si(\xi)$ be the irreducible highest weight $\glmn$-module with highest weight $\xi$ with respect to $\bmn^{\si}$. Note that $\Lmn^\si(\xi)$ can be identified with $\Lmn(\eta)$ for some $\eta\in \hmn^*$ via a sequence of odd reflections (see \cite[Section 3.1]{CL10} or \cite[Section 6.3]{CW}). Moreover, $\xi$ can be written explicitly in terms of $\eta$ and vice versa. We will only show a special case for the correspondence in \propref{Lmnsi} below, which suffices for our purposes in this paper.

We introduce some more notations.
For $p \in \N$ with $p\le m$, define $\si_p \in \fS_{m+n}$ by
\begin{equation} \label{sip}
\si_p(i)=
\begin{cases}
i,\quad\text{if } i=1, \ldots, p;\\
i+n,\quad\text{if } i=p+1, \ldots, m;\\
i-(m-p),\quad\text{if } i=m+1, \ldots, m+n.
\end{cases}
\end{equation}
Thus, $\s_{\mn}^{\si_p}=(0^p, 1^n, 0^{m-p})$ if $\s_{\mn}:=(0^{m}, 1^n) \in \cS_{\mn}$.
For any $\la=(\la_1, \la_2, \ldots) \in \Pmn$, let
$$
 {\ovla}^{\si_p}:=\sum_{i=1}^{p}\la_i\ep_i+\sum_{i=1}^{n} \left\langle \la_i^\prime-p \right\rangle\ep_{i-\hf}+\sum_{i=p+1}^{m} \left\langle \la_{i}-n  \right\rangle \ep_i  \in \hmn^*.
$$
We define
$${\mf X }^+_{p|n|m-p}=\setc*{\! \ovla^{\si_p} \in \hmn^*}{{\la \in \Pmn}\!}\!,$$
whose elements are dominant weights with respect to $\bmn^{\si_p}$.

The following proposition follows from \propref{Cmn} together with the proof of \cite[Lemma 3.2]{CL10} (cf. \cite[Proposition 6.4]{CLW15}).

\begin{prop}\label{Lmnsi} Every irreducible $\glmn$-module in $\Cmn$ is of the form $\Lmn^{\si_p}(\xi)$ for some $\xi \in {\mf X}^+_{p|n|m-p}$.
Moreover, $\Lmn^{\si_p}(\ovla^{\si_p})$ can be identified with $\Lmn(\ovlamn)$ for any $\la \in \Pmn$, and there is an element $A_{\ovlamn}\in U(\glmn)$ such that $A_{\ovlamn}v$ is a $\si_p$-singular vector in $\Lmn^{\si_p}(\ovla^{\si_p})$ for any singular vector $v$ in $\Lmn(\ovlamn)$.
\end{prop}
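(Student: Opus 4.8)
The plan is to reduce to the semisimplicity of $\Cmn$ and then track highest weights and highest weight vectors along an explicit chain of odd reflections linking $\bmn$ to $\bmn^{\si_p}$. By \propref{Cmn} it suffices to treat $M=\Lmn(\ovlamn)$ with $\la\in\Pmn$. The Borel subalgebras $\bmn$ and $\bmn^{\si_p}$ correspond, respectively, to the total orders
$$
1<\cdots<m<\hf<\cdots<n-\hf
\qquad\text{and}\qquad
1<\cdots<p<\hf<\cdots<n-\hf<p+1<\cdots<m
$$
on $\Imn$. Hence $\bmn^{\si_p}$ is obtained from $\bmn$ by moving the $n$ odd indices $\hf,\ldots,n-\hf$ to the left past the $m-p$ even indices $p+1,\ldots,m$; this is realized by a sequence of $n(m-p)$ odd reflections, each interchanging an adjacent even--odd pair $a,b$ and so attached to the isotropic odd simple root $\ep_a-\ep_b$ (with corresponding root vector $E_{b,a}$) of the intermediate Borel subalgebra.

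I would then invoke the odd reflection rule for irreducible highest weight modules (see \cite[Section 3.1]{CL10} and the proof of \cite[Lemma 3.2]{CL10}; cf.\ \cite[Section 6.3]{CW} and \cite[Proposition 6.4]{CLW15}): if a Borel $\mf b'$ arises from $\mf b$ by the odd reflection at an isotropic odd simple root $\beta=\ep_a-\ep_b$, and $L$ is an irreducible $\mf b$-highest weight module with highest weight $\xi$ and highest weight vector $v_\xi$, then $L$ is $\mf b'$-highest weight, with highest weight $\xi-\beta$ and highest weight vector $E_{b,a}v_\xi$ when $\xi(E_{a,a})+\xi(E_{b,b})\ne 0$, and with highest weight $\xi$ and highest weight vector $v_\xi$ otherwise. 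Iterating this over the $n(m-p)$ reflections above carries $v_{\ovlamn}$ to $A_{\ovlamn}v_{\ovlamn}$, where $A_{\ovlamn}\in U(\glmn)$ is the ordered product of the root vectors $E_{b,a}$ occurring at the steps at which the displayed condition holds; this is the desired element, and $A_{\ovlamn}v_{\ovlamn}$ is a nonzero $\si_p$-highest weight vector of $\Lmn(\ovlamn)$. The accompanying computation of the successive highest weights is precisely the one in the proof of \cite[Lemma 3.2]{CL10}; it gives that the final highest weight is $\ovla^{\si_p}$, so that $\Lmn^{\si_p}(\ovla^{\si_p})\cong\Lmn(\ovlamn)$. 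Since $\{\ovla^{\si_p}\mid\la\in\Pmn\}={\mf X}^+_{p|n|m-p}$ by definition, the first assertion follows.

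Finally, let $v$ be any singular vector of $\Lmn(\ovlamn)$. As $\Lmn(\ovlamn)$ is irreducible, its singular space is spanned by $v_{\ovlamn}$ (any nonzero singular vector of weight $\mu$ would generate a highest weight submodule of highest weight $\mu$, hence all of $\Lmn(\ovlamn)$, forcing $\mu=\ovlamn$, and $\dim\Lmn(\ovlamn)_{\ovlamn}=1$). Therefore $v$ is a nonzero scalar multiple of $v_{\ovlamn}$, whence $A_{\ovlamn}v$ is a nonzero scalar multiple of the $\si_p$-highest weight vector $A_{\ovlamn}v_{\ovlamn}$ of $\Lmn^{\si_p}(\ovla^{\si_p})$, i.e.\ a $\si_p$-singular vector. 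The only genuine work here is the weight bookkeeping through the chain of odd reflections, which is routine and already available in the literature; the point demanding care is keeping the conditions $\xi(E_{a,a})+\xi(E_{b,b})\ne0$ and the associated root vectors consistent with the chosen orders at each intermediate step.
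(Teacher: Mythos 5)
Your proposal is correct and follows essentially the same route as the paper, which simply cites \propref{Cmn} together with the odd-reflection argument in the proof of \cite[Lemma 3.2]{CL10}: you reconstruct exactly that chain of $n(m-p)$ odd reflections from $\bmn$ to $\bmn^{\si_p}$, apply the standard odd-reflection rule to produce $A_{\ovlamn}$ and the weight $\ovla^{\si_p}$, and correctly reduce the final claim to the one-dimensionality of the singular space of the irreducible module.
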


For $p,k \in \Z_+$ with $m \ge p>0$ and $n \ge k$, we may regard $\Xi_{p|k}\subseteq \Ximn$.
Define the {\em truncation functor}
$\otr^{\mn}_{p|k}: \Cmn \longrightarrow \Cpk$ by
$$
 \otr^{\mn}_{p|k}(M)=\bigoplus_{\nu\in \Xi_{p|k}}M_{\nu}, \qquad \hbox{for $M\in \Cmn$},
$$
and $\otr^{\mn}_{p|k}(f)$ is defined to be the restriction of $f$ to $\otr^{\mn}_{p|k}(M)$ for $f \in {\rm Hom}_{\Cmn}(M, N)$.
When it is clear from the context, we will write $\otr_{p|k}$ instead of $\otr^{\mn}_{p|k}$.
According to the weight space decomposition $M=\bigoplus_{\nu\in \Xi_{\mn}}M_{\nu}$, the truncation functor $\otr^{\mn}_{p|k}$ is clearly an exact functor and $\otr_{p|k}(f) \in {\rm Hom}_{\Cpk}(\otr_{p|k}(M), \otr_{p|k}(N))$ for $M, N \in \Cmn$.
By \lemref{weight=0}, $\otr_{p|k}(M\otimes N)=\otr_{p|k}(M)\otimes \otr_{p|k}(N)$ for $M, N \in \Cmn$, and hence $\otr^{\mn}_{p|k}$ is a tensor functor.

The following proposition can be proved in a way similar to \cite[Lemma 3.2]{CLW11} (cf. \cite[Proposition 6.9]{CW} and \cite[Proposition 7.5]{CLW15}). 
Evidently, every $(p|k)$-hook partition is also an $(m|n)$-hook partition.

 \begin{prop}\label{truncation}
Let $p,k \in \Z_+$ with $m \ge p>0$ and $n \ge k$. For $\la \in \Pmn$, we have
 $$
 \otr_{p|k} \big( \Lmn({\ovlamn}) \big)=\begin{cases}
       L_{p|k}({\ovla^{p|k}}),&\qquad \mbox{if $\la \in \cP_{p|k}$};\\
       0,&\qquad  \mbox{otherwise}.
     \end{cases}
 $$
   \end{prop}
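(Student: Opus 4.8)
The plan is to reduce the claim to the decomposition of $\Lmn(\ovlamn)$ into weight spaces and then check which weights survive the truncation. First I would recall that $\otr_{p|k}$ simply discards the weight spaces $\Lmn(\ovlamn)_\nu$ with $\nu \notin \Xi_{p|k}$, so the statement is really about which weights of $\Lmn(\ovlamn)$ lie in $\Xi_{p|k}=\sum_{i\in\mathbb{I}_{p|k}}\Zp\ep_i$; equivalently, we must describe the subspace on which $E_{ii}$ acts by zero for all $i\in\Imn\setminus\mathbb{I}_{p|k}$, i.e. for $i\in\{p+1,\ldots,m\}\cup\{k+\hf,\ldots,n-\hf\}$.

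The key computation is to identify this ``$\gl_{p|k}$-part'' of $\Lmn(\ovlamn)$ as a $\gl_{p|k}$-module. Using the embedding $\gl_{p|k}\hookrightarrow\glmn$ in the top-left corner and the general fact (valid because $\Cmn$ is semisimple and morphisms behave well under $\otr_{p|k}$, as established before the statement) that the truncation functor is exact and monoidal, it suffices to treat the irreducible case. For $\Lmn(\ovlamn)$ one argues via highest-weight theory: the highest-weight vector $v_{\ovlamn}$ has weight $\ovlamn$, which lies in $\Xi_{p|k}$ precisely when $\la_i=0$ for $i>p$ and $\langle\la_i'-m\rangle=0$ for $i>k$, i.e. when $\la_{p+1}=0$ and $\la_{k+1}'\le m$; the first says $\ell(\la)\le p$ and, combined with the $(m|n)$-hook condition $\la_{m+1}\le n$, the second is equivalent to $\la$ being a $(p|k)$-hook partition. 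When $\la\in\cP_{p|k}$, the submodule of $\Lmn(\ovlamn)$ generated by $v_{\ovlamn}$ under $\gl_{p|k}$ is the irreducible polynomial $\gl_{p|k}$-module of highest weight $\ovla^{p|k}$, and one shows (matching weights and using that $\Lmn(\ovlamn)$ has no $E_{ii}$-eigenvalue violating $\nu\in\Xi_{p|k}$ outside this submodule — a character/weight-multiplicity count, or the Sergeev–Pragacz formula for hook Schur functions) that $\otr_{p|k}(\Lmn(\ovlamn))$ coincides with it. When $\la\notin\cP_{p|k}$, every weight $\nu$ of $\Lmn(\ovlamn)$ has some $E_{ii}$-eigenvalue that is nonzero for an $i\notin\mathbb{I}_{p|k}$ — intuitively, $\la$ has a box outside the $(p|k)$-hook, and since the weights of $\Lmn(\ovlamn)$ are obtained from $\ovlamn$ by subtracting positive roots (which can only move weight between coordinates, never decrease the total ``content'' below that forced by the shape), no weight can fit into $\Xi_{p|k}$ — so $\otr_{p|k}(\Lmn(\ovlamn))=0$. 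This is exactly the line of argument of \cite[Lemma 3.2]{CLW11}, and I would carry it out by that analogy, adapting the combinatorics of hook partitions.

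The main obstacle I anticipate is the vanishing half: showing that when $\la\notin\cP_{p|k}$ \emph{no} weight of $\Lmn(\ovlamn)$ lands in $\Xi_{p|k}$. This is not purely formal because the irreducible module $\Lmn(\ovlamn)$ is a quotient of a Verma-type module and one must control its actual weights, not just those of the Kac module. The clean way is to invoke the known character of $\Lmn(\ovlamn)$ in terms of hook Schur functions $hs_\la$ (or $\mathrm{HS}_\la$) in variables $x_1,\ldots,x_m\,|\,y_1,\ldots,y_n$: specializing the $y$-variables $y_{k+1},\ldots,y_n$ and $x$-variables $x_{p+1},\ldots,x_m$ appropriately (sending them to the ``dual'' of demanding $E_{ii}$-eigenvalue zero) yields $hs_\la$ restricted to $x_1,\ldots,x_p\,|\,y_1,\ldots,y_k$, which is identically zero when $\la$ is not a $(p|k)$-hook partition and equals $hs_{\la}^{(p|k)}$ — the character of $L_{p|k}(\ovla^{p|k})$ — otherwise. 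Then $\otr_{p|k}(\Lmn(\ovlamn))$ has the right character and, being a polynomial $\gl_{p|k}$-module in the semisimple category $\Cpk$, is determined by it. The tensor-product statement for general $M,N\in\Cmn$ then follows immediately from $\otr_{p|k}(M\otimes N)=\otr_{p|k}(M)\otimes\otr_{p|k}(N)$, already noted before the proposition, together with Proposition~\ref{Cmn}.
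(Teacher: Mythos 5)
Your final argument --- computing the character of $\otr_{p|k}\big(\Lmn(\ovlamn)\big)$ by setting $x_{p+1}=\dots=x_m=y_{k+1}=\dots=y_n=0$ in the hook Schur function $hs_\la$ and using that polynomial $\gl_{p|k}$-modules in the semisimple category $\Cpk$ are determined by their characters --- is correct and complete, and it is a genuinely different route from the paper's. The paper instead factors the truncation as $\otr^{\mn}_{p|k}=\otr^{m|k}_{p|k}\circ\otr^{\mn}_{m|k}$, quotes \cite[Lemma 3.2]{CLW11} for the odd step, and for the even step first rewrites $\Lmn(\ovlamn)$ as $\Lmn^{\si_p}(\ovla^{\si_p})$ via odd reflections (Proposition~\ref{Lmnsi}), so that the surviving weight spaces sit at the top of the module for the new Borel, and then adapts \cite[Proposition 7.5]{CLW15}. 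Your character computation is more self-contained (no odd reflections), at the price of invoking the Berele--Regev/Sergeev description of the irreducible polynomial characters; both are legitimate.

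However, the intermediate ``highest-weight theory'' paragraph contains a real error. The weight $\ovlamn$ lies in $\Xi_{p|k}$ iff $\la_{p+1}=0$, i.e.\ $\ell(\la)\le p$, which is strictly stronger than $\la\in\cP_{p|k}$ (the latter only requires $\la_{p+1}\le k$); your claimed equivalence already fails for $m=2$, $n=1$, $p=k=1$, $\la=(1,1)$: here $\la\in\cP_{1|1}$, but $\ovla^{2|1}=\ep_1+\ep_2$ has $E_{2,2}$-eigenvalue $1$, so the highest weight vector is annihilated by $\otr_{1|1}$ even though $\otr_{1|1}\big(L_{2|1}(\ep_1+\ep_2)\big)=L_{1|1}(\ep_1+\ep_{\hf})\ne 0$. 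For the same reason, the $\gl_{p|k}$-submodule generated by $v_{\ovlamn}$ is not $L_{p|k}(\ovla^{p|k})$ in general: the restriction of $\ovlamn$ to $\fh_{p|k}$ differs from $\ovla^{p|k}$ unless $p=m$, which is exactly the content of the Remark following the proposition and is why the paper routes the even truncation through odd reflections. None of this damages your character argument, which bypasses highest weight vectors entirely; you should simply make that the whole proof.
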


 \begin{proof}
   Note that $ \otr^{\mn}_{p|k}= \otr^{m|k}_{p|k} \circ \otr^{\mn}_{m|k}$. Since $\otr^{\mn}_{m|k}$ is the functor $\tr^n_k: \ov\OO_n \longrightarrow \ov\OO_k$ in \cite[Lemma 3.2]{CLW11} restricted to the corresponding categories of polynomial modules, we see, by {\em loc. cit.}, that $\otr^{\mn}_{m|k}(\Lmn({\ovlamn}))=L_{m|k}({\ovla^{m|k}})$ if $\la \in \cP_{m|k}$ and 0 otherwise.

   We claim that $\otr^{m|k}_{p|k}(L_{m|k}\big({\ovla^{m|k}})\big)=L_{p|k}({\ovla^{p|k}})$ if $\la \in \cP_{p|k}$ and 0 otherwise.
 It suffices to consider $k=n$. By \propref{Lmnsi}, $\otr^{\mn}_{p|n}\big(\Lmn({\ovlamn}) \big)=\otr^{\mn}_{p|n}\big(\Lmn^{\si_p}({\ovla}^{\si_p}) \big)$.
 Since $\otr^{\mn}_{p|n}$ is similar to the functor $\breve{\tr}$ in \cite[Proposition 7.5]{CLW15} (with ${\bf { b}}=(0^p, 1^n)$ and $k=m-p$), we can adapt the proof of {\em loc. cit.} to establish the claim.
 \end{proof}

\begin{rem}
In general, if $\la \in \cP_{p|k}$, the highest weight $\ovlamn$ on the left hand side of the equality in \propref{truncation} does not equal the highest weight $\ovla^{p|k}=\sum_{i=1}^{p}\la_i\ep_i+\sum_{i=1}^{k} \left\langle \la_i^\prime-p \right\rangle\ep_{i-\hf}  \in \fh_{p|k}^*$ on the right hand side unless $p=m$.
\end{rem}

The following corollary is a consequence of \propref{Cmn}, \propref{Lmnsi} and \propref{truncation}.

\begin{cor}\label{sip-wt}
Let $p \in \N$ with $p\le m$. For any $M \in \Cmn$ and $\la \in \cP_{p|n}$, we have
 $$
 \otr_{p|n}(M )^\sing_{\ovla^{p|n}}=M^{{\si_p}\mbox{-}\sing}_{\ovla^{\si_p}}.
 $$
\end{cor}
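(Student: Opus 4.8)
The plan is to establish the equality of these two subspaces of $M$ directly, reducing everything to the polynomiality of $M$ together with the explicit form of $\si_p$; no decomposition of $M$ is actually needed.

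First I would record two elementary facts. (i) Because $\la$ is a $(p|n)$-hook partition, $\la_i \le \la_{p+1} \le n$ for all $i > p$, so $\langle \la_i - n\rangle = 0$ for $i = p+1, \ldots, m$; hence, under the inclusion $\fh_{p|n}^* \hookrightarrow \hmn^*$, the weight $\ovla^{\si_p}$ coincides with $\ovla^{p|n}$, and in particular $\ovla^{\si_p} \in \Xi_{p|n}$. It follows that a vector of $\hmn$-weight $\ovla^{\si_p}$ lies in $\otr_{p|n}(M)$ and, conversely, that the $\fh_{p|n}$-weight space of $\otr_{p|n}(M)$ for $\ovla^{p|n}$ and the $\hmn$-weight space of $M$ for $\ovla^{\si_p}$ are one and the same subspace of $M$. (ii) Unwinding the definition of $\si_p$ — equivalently, using $\s_{\mn}^{\si_p} = (0^p, 1^n, 0^{m-p})$ and the fact that $\si_p$ preserves the natural order within each of its three blocks — the order $<_{\si_p}$ on $\Imn$ is
$$1 <_{\si_p} \cdots <_{\si_p} p <_{\si_p} \hf <_{\si_p} \cdots <_{\si_p} n-\hf <_{\si_p} p+1 <_{\si_p} \cdots <_{\si_p} m.$$
Thus the subalgebra $\gl_{p|n} \subseteq \glmn$, spanned by the $E_{i,j}$ with $i,j \in \{1,\ldots,p\}\cup\{\hf,\ldots,n-\hf\}$, forms the initial segment for $<_{\si_p}$, and $<_{\si_p}$ restricts to the standard order on that index set.

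The key step is then the identity $M^{\si_p\mbox{-}\sing}_\nu = \otr_{p|n}(M)^\sing_\nu$ for every $\nu \in \Xi_{p|n}$. A vector $v \in M_\nu$ lies in $M^{\si_p\mbox{-}\sing}$ iff $E_{i,j}v = 0$ whenever $i <_{\si_p} j$. By (ii) such pairs $(i,j)$ split into two kinds. If $i < j$ with both indices in $\{1,\ldots,p\}\cup\{\hf,\ldots,n-\hf\}$, these are exactly the defining equations of $\otr_{p|n}(M)^\sing$. Otherwise $j \in \{p+1,\ldots,m\}$, and then $E_{i,j}v$ has weight $\nu + \ep_i - \ep_j$, whose $\ep_j$-coefficient equals $\nu(E_{j,j}) - 1 = -1$ since $\nu \in \Xi_{p|n}$; as every weight of the polynomial module $M$ lies in $\Ximn = \sum_{i\in\Imn}\Zp\ep_i$, this weight space is zero, so $E_{i,j}v = 0$ automatically. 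Hence, for $\nu \in \Xi_{p|n}$, a vector is $\si_p$-singular iff it is singular in $\otr_{p|n}(M)$. Applying this with $\nu = \ovla^{\si_p}$ and combining with (i) yields $M^{\si_p\mbox{-}\sing}_{\ovla^{\si_p}} = \otr_{p|n}(M)^\sing_{\ovla^{\si_p}} = \otr_{p|n}(M)^\sing_{\ovla^{p|n}}$, which is the assertion.

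The only real obstacle is the bookkeeping in step (ii): one must pin down the order $<_{\si_p}$ correctly and observe that $\gl_{p|n}$ occupies the initial block, so that every raising operator $E_{i,j}$ appearing in $\bmn^{\si_p}$ but not in the standard Borel of $\gl_{p|n}$ has upper index $j \in \{p+1,\ldots,m\}$ — exactly the situation in which it annihilates any vector of weight in $\Xi_{p|n}$ by polynomiality. Alternatively, one may run the argument indicated by the cross-references: decompose $M$ via \propref{Cmn}, apply \propref{truncation} and \propref{Lmnsi} to each irreducible summand, and use that the singular (respectively $\si_p$-singular) line of an irreducible highest weight module is precisely its highest weight space; but the direct argument above seems more transparent.
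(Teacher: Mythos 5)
Your argument is correct, but it proceeds along a genuinely different route from the paper's. The paper derives the corollary from the decomposition of $M$ into irreducibles (\propref{Cmn}), the odd-reflection identification $\Lmn^{\si_p}(\ovla^{\si_p})\cong\Lmn(\ovlamn)$ (\propref{Lmnsi}), and the computation of the truncation functor on irreducibles (\propref{truncation}): both sides of the asserted equality become the span of the highest weight vectors (for the respective Borels) of the irreducible summands isomorphic to $\Lmn(\ovlamn)$, and these coincide inside $M$. You instead avoid semisimplicity and odd reflections entirely and argue on the level of weights and root vectors: you correctly identify the order $<_{\si_p}$ as placing $\{p+1,\ldots,m\}$ after the block $\mathbb{I}_{p|n}=\{1,\ldots,p\}\cup\{\hf,\ldots,n-\hf\}$, observe that for $\nu\in\Xi_{p|n}$ every raising operator $E_{i,j}\in\bmn^{\si_p}$ with $j\in\{p+1,\ldots,m\}$ sends $M_\nu$ into the weight space $M_{\nu+\ep_i-\ep_j}$, which vanishes because its $\ep_j$-coefficient is $-1$ and $M$ is polynomial, and note that $\ovla^{\si_p}=\ovla^{p|n}$ precisely because $\la\in\cP_{p|n}$ forces $\langle\la_i-n\rangle=0$ for $i>p$. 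This gives the stronger and cleaner statement $M^{\si_p\mbox{-}\sing}_\nu=\otr_{p|n}(M)^\sing_\nu$ for \emph{every} $\nu\in\Xi_{p|n}$, is self-contained, and makes transparent exactly where polynomiality enters; the paper's route, by contrast, recycles machinery it needs elsewhere (\propref{Lmnsi} and \propref{truncation}) and yields the companion information about which irreducible constituents contribute. Both are valid proofs; yours is the more elementary of the two.
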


\subsection{Tensor products of polynomial modules} \label{tensor}

Fix $\uz\in \Xl$, $M_1, \ldots, M_\ell \in \Cmn$
and the tensor product of polynomial modules
$$
\bn M :=M_1 \otimes \cdots \otimes  M_\ell .
$$
The Gaudin algebra $\Bmn$ acts on $\bn M$ and commutes with the action of $\glmn$ (cf. \cite[Sections 3.1--3.2]{MR}).
Therefore  $\bn M^{\si\mbox{-}\sing}$ and $\bn M^{\si\mbox{-}\sing}_\mu$ are $\Bmn$-modules for any $\si \in \Smn$ and any $\si$-singluar weight $\mu$ of $\bn M$.

By \propref{Cmn} and \propref{Lmnsi} as well as the fact that $\Bmn$ commutes with the element $A_{\ovlamn}\in U(\glmn)$ in \propref{Lmnsi}, we obtain the following proposition.

\begin{prop} \label{B-isom}
Let $p \in \N$ with $p\le m$.
For any $\la \in \Pmn$, there is an isomorphism of $\Bmn$-modules
$$
\phi_{\ovlamn}^p:  \bn M ^\sing_{\ovlamn} \longrightarrow \bn M^{{\si_p}\mbox{-}\sing}_{\ovla^{\si_p}},
$$
defined by $\phi_{\ovlamn}^p (v)=A_{\ovlamn} v$ for $v \in  \bn M ^\sing_{\ovlamn}$.
\end{prop}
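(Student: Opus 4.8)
The plan is to use the semisimplicity of $\Cmn$ to reduce the statement to a single irreducible summand, where it is essentially the content of \propref{Lmnsi}, and then to promote the resulting linear isomorphism to one of $\Bmn$-modules via the fact that the $\Bmn$-action on $\bn M$ commutes with the $\glmn$-action. As a preliminary I would record the elementary observation that for an irreducible module $N$ in $\Cmn$ and any $\tau \in \Smn$, the only $\tau$-singular weight of $N$ is its $\bmn^\tau$-highest weight $\eta$, and $N^{\tau\mbox{-}\sing}_\eta$ is one-dimensional: a $\tau$-singular vector of $N$ generates a $\bmn^\tau$-highest weight submodule, which must be all of $N$ by irreducibility, so $N$ is a $\bmn^\tau$-highest weight module and its highest weight line is the unique (twisted-)singular weight space.

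Next I would decompose, using \propref{Cmn}, $\bn M = \bigoplus_k N_k$ into irreducible polynomial $\glmn$-submodules, and use \propref{Lmnsi} to identify each $N_k$ with some $\Lmn^{\si_p}(\eta_k)$ with $\eta_k \in {\mf X}^+_{p|n|m-p}$; equivalently $N_k \cong \Lmn(\xi_k)$ for the unique $\xi_k \in \Xmn$ corresponding to $\eta_k$, so that $\eta_k = \ovla^{\si_p}$ precisely when $\xi_k = \ovlamn$. Since the decomposition is $\glmn$-equivariant and internal, both the (twisted) singular condition and the weight are detected componentwise, so
\[
\bn M^\sing_{\ovlamn} = \bigoplus_k (N_k)^\sing_{\ovlamn}
\qquad\text{and}\qquad
\bn M^{{\si_p}\mbox{-}\sing}_{\ovla^{\si_p}} = \bigoplus_k (N_k)^{{\si_p}\mbox{-}\sing}_{\ovla^{\si_p}},
\]
where, by the preliminary observation, the $k$-th summand on either side is a line when $\xi_k = \ovlamn$ and is $0$ otherwise.

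Finally I would check that $\phi^p_{\ovlamn}\colon v \mapsto A_{\ovlamn}v$ does the job. Writing $v \in \bn M^\sing_{\ovlamn}$ as $\sum_k v_k$ with $v_k \in (N_k)^\sing_{\ovlamn}$, note that $A_{\ovlamn}$ preserves each $\glmn$-submodule $N_k$ and, by \propref{Lmnsi}, sends a nonzero $v_k$ to a nonzero $\si_p$-singular vector of weight $\ovla^{\si_p}$ in $N_k \cong \Lmn^{\si_p}(\ovla^{\si_p})$; hence $\phi^p_{\ovlamn}(v) \in \bn M^{{\si_p}\mbox{-}\sing}_{\ovla^{\si_p}}$, and on each summand with $\xi_k = \ovlamn$ the map $\phi^p_{\ovlamn}$ restricts to a nonzero map of lines, i.e.\ an isomorphism. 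Being a direct sum of isomorphisms between matching summands, $\phi^p_{\ovlamn}$ is a linear isomorphism; and since the $\Bmn$-action on $\bn M$ commutes with the $\glmn$-action it commutes with the action of $A_{\ovlamn} \in U(\glmn)$ on $\bn M$, so $\phi^p_{\ovlamn}$ intertwines the $\Bmn$-actions and is therefore an isomorphism of $\Bmn$-modules. The only point that deserves care is the preliminary observation — that a (twisted-)singular weight space of an irreducible polynomial module is just its (twisted-)highest weight line — but this is immediate from irreducibility; the rest is bookkeeping with \propref{Cmn} and the explicit element $A_{\ovlamn}$ furnished by \propref{Lmnsi}.
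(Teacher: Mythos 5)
Your argument is correct and is exactly the route the paper takes: it deduces the proposition from \propref{Cmn}, \propref{Lmnsi}, and the fact that $\Bmn$ commutes with $A_{\ovlamn}\in U(\glmn)$, with your write-up simply filling in the (routine) details that the relevant singular weight spaces are lines matched summand by summand. No gaps.
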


\begin{rem} \label{rem:B-isom}
There exists $\bar A_{\ovlamn}\in U(\glmn)$ such that the inverse of $\phi_{\ovlamn}^p$ is given by $\Big(\phi_{\ovlamn}^p\Big)^{\! -1} (w)=\bar A_{\ovlamn} w$ for $w \in \bn M^{{\si_p}\mbox{-}\sing}_{\ovla^{\si_p}}$.
\end{rem}

The following proposition plays an important role in proving our main results in \secref{main}.
Recall $\si_p \in \fS_{m+n}$ in \eqref{sip} and the action of $\Ber \big(\cLmn(\uz) \big)$ defined in \eqref{pdiff}. Also, $ \otr_{p|n}(M )^\sing_{\ovla^{p|n}}=M^{{\si_p}\mbox{-}\sing}_{\ovla^{\si_p}}$ for $\la \in \cP_{p|n}$ and $\otr_{m|k}(M)^{\sing}_\mu=M^{\sing}_\mu$  for $\mu\in \Xmn$ with $\mu(E_{k+\hf, k+\hf})=0$ by \corref{sip-wt}.

\begin{prop} \label{trunc}
Let $p,k \in \Z_+$ with $m \ge p>0$ and $n \ge k$.

\begin{enumerate} [\normalfont(i)]

\item If $\mu$ is a $\si_p$-singular weight of $\bn M$ with $\mu(\Eii)=0$ for $i=p+1,\ldots, m$ (equivalently, $\mu=\ovla^{\si_p}$ for some $\la \in \cP_{p|n}$), then
$$
\Ber \big(\cLmn(\uz) \big) v= \Ber \big(\cL_{p|n}(\uz) \big) \pz^{m-p} v, \qquad \hbox{for all $v \in \bn M^{{\si_p}\mbox{-}\sing}_{\! \mu}$.}
$$
Consequently,
$$
\Bmn_N=\fB_{p|n}(\uz)_N,
$$
where $N:=\bigoplus_\mu\otr_{p|n}(\bn M)^{\sing}_\mu=\bigoplus_\mu \bn M^{{\si_p}\mbox{-}\sing}_{\! \mu}$.
The direct sum is taken over all $\si_p$-singular weights $\mu$ of $\bn M$ with $\mu(\Eii)=0$ for $i=p+1, \ldots, m$.

\item If $\mu$ is a singular weight of $\bn M$ such that $\mu(E_{k+\hf, k+\hf})=0$, then
$$
\Ber \big(\cLmn(\uz) \big) v=  \Ber \big(\cL_{m|k}(\uz) \big)  \pz^{k-n} v, \qquad \hbox{for all $v \in \bn M^{\sing}_{\! \mu}$.}
$$
Consequently,
$$
\Bmn_{N^\prime}=\fB_{m|k}(\uz)_{N^\prime},
$$
where $N^\prime:=\bigoplus_\mu \otr_{m|k}(\bn M)^{\sing}_\mu=\bigoplus_\mu \bn M^{\sing}_{\! \mu}$.
The direct sum is taken over all singular weights $\mu$ of $\bn M$ with $\mu(E_{k+\hf, k+\hf})=0$.
\end{enumerate}
\end{prop}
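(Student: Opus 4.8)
The plan is to prove the two displayed pointwise identities first; the equalities of Gaudin algebras then follow formally. Indeed, once we know $\Ber(\cLmn(\uz))v=\Ber(\cL_{p|n}(\uz))\pz^{m-p}v$ for all $v\in N$, matching the coefficients of each power of $\pz$ in \eqref{Ber-exp} and in its $\gl_{p|n}$-analogue shows that on $N$ the Gaudin Hamiltonian $b_k(z)$ acts exactly as $b^{p|n}_{k-(m-p)}(z)$ for every $k$; since $k$ ranges over $\{k\in\Z:k\le m-n\}$ precisely when $k-(m-p)$ ranges over $\{j\in\Z:j\le p-n\}$, \propref{Bmn-gen} shows $\Bmn_N$ and $\fB_{p|n}(\uz)_N$ are generated on $N$ by the same operators, hence are equal. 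The argument for $\Bmn_{N^\prime}=\fB_{m|k}(\uz)_{N^\prime}$ is identical, with $\pz^{m-p}$ replaced by $\pz^{k-n}$. So the whole proposition comes down to the two pointwise identities.

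The crucial input is the following weight remark. Because $\bn M$ is polynomial, all its weights lie in $\Ximn=\sum_{i\in\Imn}\Zp\ep_i$ and $(\bn M)_\mu=\bigoplus(M_1)_{\nu_1}\otimes\cdots\otimes(M_\ell)_{\nu_\ell}$, the sum over $\nu_1+\cdots+\nu_\ell=\mu$. So if $\mu(E_{b,b})=0$ for some $b\in\Imn$, then each $\nu_j(E_{b,b})$ vanishes and $E^{(j)}_{b,b}$ acts as $0$ on $(\bn M)_\mu$, while for $a\ne b$ the operator $E^{(j)}_{a,b}$ maps $(\bn M)_\mu$ into $(\bn M)_{\mu+\ep_a-\ep_b}=0$ since the $b$-coordinate of $\mu+\ep_a-\ep_b$ equals $-1\notin\Zp$. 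Expanding $E_{a,b}(z)=\sum_j E^{(j)}_{a,b}/(z-z_j)$ at $\infty$, it follows that for every such $b$ and every $a\in\Imn$ the operator $E_{a,b}(z)$ annihilates every ``$\mu$-homogeneous'' element of $\bn M\blb z^{-1},\pz^{-1}\brb$, that is, every element all of whose coefficients lie in $(\bn M)_\mu$. Moreover $\pz$ maps the space of $\mu$-homogeneous elements into itself.

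For part (i), by \propref{perm} we may replace $\cLmn(\uz)$ by $\cLmn(\uz)^{\si_p}$, a Manin matrix of type $\smn^{\si_p}=(0^p,1^n,0^{m-p})$. In block form with top-left $(p+n)\times(p+n)$ corner $W$, a direct check of the entries gives $W=\cL_{p|n}(\uz)$, so \propref{HM1} yields $\Ber^{\smn^{\si_p}}(\cLmn(\uz)^{\si_p})=\Ber(\cL_{p|n}(\uz))\cdot\cdet(Z-YW^{-1}X)$, the last factor being a column determinant by \propref{CFR} because $Z-YW^{-1}X$ has type $(0^{m-p})$. The last $m-p$ columns of $\cLmn(\uz)^{\si_p}$ are labeled by the indices $p+1,\ldots,m$, and the hypothesis is exactly $\mu(\Eii)=0$ for these $i$. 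Every off-diagonal entry in these columns, and every entry of $X$ — hence of $YW^{-1}X$ — in these columns, is of the form $c\,E_{a,b}(z)$ with $b\in\{p+1,\ldots,m\}$, while the diagonal entries are $\pz$ minus such operators; by the weight remark each of these columns of $S:=Z-YW^{-1}X$ sends a $\mu$-homogeneous element $w$ to $\pz w$ times the corresponding coordinate vector. Since $\pz$ preserves $\mu$-homogeneity, expanding $\cdet(S)(v)$ one column at a time starting from the last column leaves only the identity-permutation term, giving $\cdet(S)v=\pz^{m-p}v$ for $v\in\bn M^{{\si_p}\mbox{-}\sing}_\mu$. Together with the block factorization and \propref{perm} this proves (i).

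Part (ii) is the same argument with even and odd indices interchanged and no preliminary permutation: $\cLmn(\uz)$ is split with top-left corner $\cL_{m|k}(\uz)$ (type $(0^m,1^k)$) and bottom-right corner of type $(1^{n-k})$, and \propref{HM1} reduces the claim to $\Ber^{(1^{n-k})}(S^\prime)v=\pz^{k-n}v$ for the Schur complement $S^\prime$. Since $\mu$ is a singular weight we have $\mu=\ovlamn$ for some $\la\in\Pmn$, so $\mu(E_{k+\hf,k+\hf})=0$ forces $\mu(\Eii)=0$ for all $i\in\{k+\hf,\ldots,n-\hf\}$; the weight remark then shows that on $\mu$-homogeneous elements every off-diagonal entry of $S^\prime$ acts as $0$ and every diagonal entry acts as $\pz$. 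Writing $\Ber^{(1^{n-k})}(S^\prime)=d_1(S^\prime)^{-1}\cdots d_{n-k}(S^\prime)^{-1}$, the Schur-complement formula for quasideterminants expresses each $d_i(S^\prime)$ as a diagonal entry of $S^\prime$ corrected by a term built from off-diagonal entries of $S^\prime$, so each $d_i(S^\prime)$ acts as $\pz$ on $\mu$-homogeneous elements, each $d_i(S^\prime)^{-1}$ acts there as $\pz^{-1}$ and preserves $\mu$-homogeneity, and the product acts as $\pz^{k-n}$, proving (ii). In both parts the main obstacle is making this operator bookkeeping rigorous: the entries of $W^{-1}$, and the inverses inside the quasideterminants, are genuine pseudo-differential operators, so one must carry the $\mu$-homogeneity through every nested composition; the point is precisely that $\pz$ — and so every operator built from $\pz$ and the $E_{a,b}(z)$ — preserves the space of $\mu$-homogeneous elements, so the vanishing given by the weight remark propagates through all of them.
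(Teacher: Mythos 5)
Your argument is correct and shares the paper's overall strategy --- permute via \propref{perm} where needed, factor the Berezinian via \propref{HM1}, and show that the complementary factor acts as the appropriate power of $\pz$ --- but it differs in two genuine ways. First, the paper reduces to the cases $p=m-1$ and $k=n-1$ and proceeds by induction, so the Schur complement is a $1\times 1$ block and the ``acts as $\pz$'' (resp.\ ``acts as $\pz^{-1}$'') step is immediate; you take the full $(p+n)+(m-p)$ (resp.\ $(m+k)+(n-k)$) split in one shot and must therefore evaluate an $(m-p)\times(m-p)$ column determinant (resp.\ a type-$(1^{n-k})$ Berezinian) on the weight space, which your column-by-column and quasiminor-by-quasiminor bookkeeping handles correctly. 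Second, your vanishing mechanism is different: the paper kills the entries of the off-diagonal block $X$ using the $\si_p$-singularity of $v$ (each $E_{i,m}$ with $i<_{\si_p}m$ lies in the nilradical of $\bmn^{\si_p}$) and invokes the weight hypothesis only for the diagonal entry, whereas you use only polynomiality: $E_{a,b}$ annihilates the whole weight space $(\bn M)_\mu$ whenever $\mu(E_{b,b})=0$, since the target weight acquires a negative coordinate. Your mechanism is marginally stronger (the pointwise identities then hold on all of $(\bn M)_\mu$, not only on its $\si_p$-singular part), and your explicit observation that $\mu(E_{k+\hf,k+\hf})=0$ forces $\mu(\Eii)=0$ for all $i\ge k+\hf$ when $\mu$ is singular is precisely what justifies the paper's terse reduction to $k=n-1$ in part (ii). The formal point you flag at the end --- that after normal ordering, every coefficient of a product of pseudo-differential operators has as its rightmost $\Ul$-factor a coefficient of the rightmost operand, so the vanishing propagates through the nested compositions and through the geometric-series expansions of the inverses --- is indeed the only delicate step, and it holds because elements of $\Ul$ commute with $z$ and $\pz$; the paper relies on the same fact implicitly.
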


\begin{proof}
It suffices to prove (i) for the case $p=m-1$ by induction. Let $N=\bn M^{\si_p\mbox{-}\sing}_{\! \mu}$. For simplicity, we suppress $\uz$.
We may write
$$
 \cL_{\mn}^{\si_p}=\begin{bmatrix}  \cL_{(m-1)|n} & X\\ Y&  \pz-E_{m,m}(z)  \end{bmatrix},
$$
where $X=\big[ (-1)^{2i+1} E_{i,m}(z) \big]_{i \in \mathbb{I}_{(m-1)|n}}$ and $Y=\big[ - E_{m, i}(z) \big]_{i \in \mathbb{I}_{(m-1)|n}}$ are respectively $(m+n-1)\times 1$ and $1 \times (m+n-1)$ matrices.
By \propref{perm}, $\Ber(\cLmn) \! = \Ber^{\smn^{\si_p}} (\cLmn^{\si_p})$, and hence by \propref{HM1}, we have
\begin{eqnarray}\label{e1}
\Ber(\cLmn) \! =\! \Ber(\cL_{(m-1)|n})  \big( \pz-E_{m,m}(z)- Y \cL_{(m-1)|n}^{-1} X \big).
\end{eqnarray}
Evidently, $E_{i,m}(z)$ acts trivially on $N$ for any $i \in \mathbb{I}_{(m-1)|n}$ because $i<_{\si_p} m$ for all $i \in \mathbb{I}_{(m-1)|n}$.
Since $E_{m,m}(z)$ also acts trivially on $N$ by the assumption on $\mu$,
we deduce that $E_{m,m}(z) + Y \cL_{(m-1)|n}^{-1} X $ acts trivially on $N$.
This completes the proof of the first part of (i) by \eqref{e1}.
The second part follows immediately as the actions of the coefficients of $\Ber(\cLmn)$ and $\Ber (\cL_{p|n}) \pz^{m-p}$ on $\bn M$ are the same.

The proof of (ii) is similar, and it is sufficient to consider the case $k=n-1$. By \propref{HM1},
$$
\Ber(\cLmn) \! =\! \Ber(\cL_{m|(n-1)})  \big( \pz+E_{n-\hf,n-\hf}(z)- Y^\prime \cL_{m|(n-1)}^{-1} X^\prime \big)^{-1},
$$
where $X^\prime=\big[ (-1)^{2i+1} E_{i,n-\hf}(z) \big]_{i \in \mathbb{I}_{m|(n-1)}}$ and $Y^\prime=\big[E_{n-\hf, i}(z) \big]_{i \in \mathbb{I}_{m|(n-1)}}$ are respectively $(m+n-1)\times 1$ and $1 \times (m+n-1)$ matrices.
Equivalently, we have
\begin{equation}\label{e2}
\Ber(\cLmn)   \big( \pz+E_{n-\hf,n-\hf}(z)- Y^\prime \cL_{m|(n-1)}^{-1} X^\prime \big) \! =\! \Ber(\cL_{m|(n-1)}).
\end{equation}
Using an argument similar to that of (i), we see that $E_{n-\hf,n-\hf}(z) - Y^\prime  \cL_{m|(n-1)}^{-1} X^\prime$ acts trivially on $N^\prime$.
This proves the first part of (ii) by \eqref{e2}. The second part is now immediate.
\end{proof}

\section{Main results} \label{main}

In this section, we start by collecting some basic facts about Frobenius algebras and the Gaudin algebras of general linear Lie algebras.
We then prove our main results.
Let $M$ be an $\ell$-fold tensor product of irreducible polynomial modules over $\glmn$.
We show that ${M^\sing}$ is a cyclic $\Bmn$-module and the Gaudin algebra $\Bmn_{M^\sing}$ of $M^\sing$ is a Frobenius algebra, where $\uz \in \Xl$.
We also show that $\Bmn_{M^\sing}$ is diagonalizable with a simple spectrum if $\uz$ is generic.

\subsection{Frobenius algebras}

Let $\cA$ be a commutative associative unital algebra and $V$ a finite-dimensional $\cA$-module. We denote by $\cA_{V}$ the image of $\cA$ in $\End(V)$. Let $\ga: \cA  \longrightarrow \C$ be a character. We consider the vector spaces
$$
E_{\cA_V}(\ga)=\setc*{\! v \in V}{ av=\ga(a) v  \,\,\,  \mbox{for all $a \in \cA$} \!}
$$
and
$$
G_{\cA_V}(\ga)=\setc*{\! v \in V}{\mbox{for all $a \in \cA$, there exists $k \in \N$ such that } \big( a-\ga(a)1 \big)^{\! k} v=0 \!}.
$$
If $E_{\cA_V}(\ga) \not=0$, then we call $\ga$ an eigenvalue of $\cA_V$ and $E_{\cA_V}(\ga)$ the eigenspace of $\cA_V$ corresponding to $\ga$. Meanwhile, any nonzero vector of $E_{\cA_V}(\ga)$ is called an eigenvector.
If $G_{\cA_V}(\ga) \not=0$, then $G_{\cA_V}(\ga)$ is called the generalized eigenspace of $\cA_V$ corresponding to $\ga$. Clearly, $G_{\cA_V}(\ga)$ is an $\cA$-module.

A  finite-dimensional commutative associative unital algebra $\cA$ is called a {\em Frobenius algebra} if there is a nondegenerate symmetric bilinear form $(\cdot, \cdot)$ on $\cA$ such that
$$
(ab, c)=(a, bc) \quad \mbox{for all $a, b, c \in \cA$.}
$$
We recall two useful lemmas from \cite{Lu20}.

\begin{lem} [{\cite[Lemma 2.7]{Lu20}}] \label{Lu20-1}
Suppose $V$ is a cyclic $\cA$-module and admits a nondegenerate symmetric bilinear form $\langle \cdot, \cdot \rangle$ with respect to which $\cA$ is symmetric (i.e., $\langle av, w \rangle=\langle v, aw \rangle$ for all $a \in A$ and $v, w \in V$), then $\cA_V$ is a Frobenius algebra.
\end{lem}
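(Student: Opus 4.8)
The plan is to build the Frobenius form on $\cA_V$ by transporting the given form on $V$ through a cyclic vector; the whole argument rests on the fact that a cyclic vector identifies $\cA_V$ with $V$ as vector spaces.

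\emph{Step 1: the evaluation isomorphism.} Fix a cyclic vector $v_0\in V$, so that $V=\cA v_0=\cA_V v_0$, and consider the linear map $\mathrm{ev}\colon\cA_V\to V$, $a\mapsto av_0$. Cyclicity makes $\mathrm{ev}$ surjective. For injectivity, suppose $av_0=0$ for some $a\in\cA_V$; given any $w\in V$ write $w=bv_0$ with $b\in\cA_V$ and compute $aw=ab\,v_0=ba\,v_0=0$, using commutativity of $\cA_V$. Hence $a=0$ in $\End(V)$, so $\mathrm{ev}$ is a linear isomorphism. In particular $\cA_V$ is finite-dimensional (being isomorphic to the finite-dimensional $V$), commutative, associative, and unital (the image of $1\in\cA$ is $\mathrm{id}_V$), so it is a legitimate candidate for a Frobenius algebra.

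\emph{Step 2: transport of the form.} Define $\phi\colon\cA_V\to\C$ by $\phi(x)=\langle v_0,xv_0\rangle$ and set $(a,b):=\phi(ab)$ for $a,b\in\cA_V$. Using the symmetry of the $\cA$-action on $V$ one has $\phi(ab)=\langle v_0,(ab)v_0\rangle=\langle av_0,bv_0\rangle$, so equivalently $(a,b)=\langle av_0,bv_0\rangle$. From the presentation $(a,b)=\phi(ab)$, symmetry of $(\cdot,\cdot)$ is immediate since $ab=ba$, and the Frobenius identity is just $(ab,c)=\phi(abc)=(a,bc)$ by associativity of $\cA_V$. For nondegeneracy, suppose $(a,b)=0$ for all $b\in\cA_V$; since $\cA_V v_0=V$, this says $\langle av_0,w\rangle=0$ for all $w\in V$, so $av_0=0$ by nondegeneracy of $\langle\cdot,\cdot\rangle$ on $V$, and then $a=0$ by Step~1. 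Hence $(\cdot,\cdot)$ is a nondegenerate symmetric associative bilinear form on $\cA_V$, i.e., $\cA_V$ is a Frobenius algebra.

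I do not expect a genuine obstacle here. The one load-bearing point is the injectivity of $\mathrm{ev}$, which is exactly where cyclicity and commutativity of $\cA_V$ are both used; and the one thing that must be handled with care is that the form has to be defined on $\cA_V=\cA/\mathrm{Ann}_\cA(V)$ rather than on $\cA$ itself (which need not be finite-dimensional), so one works with the evaluation isomorphism throughout rather than with $\cA$ directly. All remaining verifications are routine unwindings of the definitions.
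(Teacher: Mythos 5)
Your proof is correct and is exactly the standard argument (the one in the cited reference \cite[Lemma 2.7]{Lu20}, since the paper itself only quotes the lemma without proof): identify $\cA_V$ with $V$ via evaluation at a cyclic vector, and transport the form by $(a,b)=\langle av_0,bv_0\rangle=\langle v_0,abv_0\rangle$. All the verifications, including the key injectivity of the evaluation map via commutativity of $\cA_V$, are handled properly.
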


 \begin{lem} [{\cite[Lemma 1.3]{Lu20}}] \label{Lu20-2}
Suppose $V$ is a cyclic $\cA$-module and $\cA_V$ is a Frobenius algebra. Then:

\begin{enumerate} [\normalfont(i)]

\item $\cA_V$ is a maximal commutative subalgebra of $\End(V)$ of dimension $\dim (V)$.

\item  Every eigenspace of $\cA_V$ is one-dimensional, and the set of eigenspaces of $\cA_V$ is in bijective correspondence with the set of maximal ideals of $\cA_V$.

\item Every generalized eigenspace of $\cA_V$ is a cyclic $\cA$-module.

\end{enumerate}

\end{lem}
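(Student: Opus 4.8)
The plan is to reduce the whole statement to the regular representation. Since $V$ is a cyclic $\cA$-module with generator $v_0$, the map $\cA_V \to V$, $a \mapsto a v_0$, is surjective, and it is injective because $a v_0 = 0$ forces $aV = a\cA_V v_0 = \cA_V(av_0) = 0$, i.e.\ $a = 0$ in $\End(V)$. Hence $V \cong \cA_V$ as $\cA_V$-modules, the $\cA$-action being induced by left multiplication on $\cA_V$; in particular $\dim \cA_V = \dim V$. Write $B := \cA_V$. A character $\ga$ of $\cA$ with $E_{\cA_V}(\ga) \neq 0$ annihilates the kernel of $\cA \to B$, hence descends to a character $\bar\ga$ of $B$ with maximal ideal $\mf{m}_\ga := \ker\bar\ga$; under $V \cong B$ the space $E_{\cA_V}(\ga)$ is identified with
$$
\setc*{\! b \in B}{(a - \bar\ga(a))b = 0 \text{ for all } a \in B \!} = \mathrm{Ann}_B(\mf{m}_\ga),
$$
and $G_{\cA_V}(\ga)$ with $\setc*{\! b \in B}{\text{for all } a \in B \text{ there is } k \in \N \text{ with } (a - \bar\ga(a))^k b = 0 \!}$.

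For (i): an endomorphism $\phi$ of $B$ commuting with every left multiplication $L_a$ satisfies $\phi(b) = \phi(L_b 1) = L_b\phi(1) = \phi(1)b$, so it is itself a left multiplication; thus the centralizer of $\cA_V$ in $\End(V)$ equals $\cA_V$. Any commutative subalgebra of $\End(V)$ containing $\cA_V$ lies in this centralizer, hence equals $\cA_V$, and with $\dim\cA_V = \dim V$ this gives (i). For (ii): here the Frobenius form $(\cdot,\cdot)$ on $B$ enters. For any ideal $I \subseteq B$ one has $\mathrm{Ann}_B(I) = I^{\perp}$, since $b \perp I$ means $(ba,c) = (b,ac) = 0$ for all $a \in I$, $c \in B$, which by nondegeneracy says $ba = 0$ for all $a \in I$. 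With $I = \mf{m}_\ga$ this gives $\dim E_{\cA_V}(\ga) = \dim\mf{m}_\ga^{\perp} = \dim B - \dim\mf{m}_\ga = \dim(B/\mf{m}_\ga) = 1$. Conversely every maximal ideal $\mf{m}$ of $B$ has $B/\mf{m} = \C$ ($\C$ being algebraically closed), hence induces a character of $\cA$ whose eigenspace $\mf{m}^{\perp}$ is one-dimensional, in particular nonzero. Since distinct eigenvalues give distinct nonzero eigenspaces and the assignments $\ga \mapsto \mf{m}_\ga$ and $\mf{m} \mapsto (\text{the induced character})$ are mutually inverse, (ii) follows.

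For (iii): write the finite-dimensional commutative algebra $B$ as a product $B = \prod_j B_j$ of local algebras with nilpotent maximal ideals $\mf{m}_j$, and let $e_j \in B$ be the identity of $B_j$. A character $\bar\ga$ of $B$ singles out one factor $B_{j_0}$, and I claim $G_{\cA_V}(\ga) = B_{j_0}$. If $b \in B_{j_0}$ then, for any $a \in B$, the $B_{j_0}$-component of $a - \bar\ga(a)$ lies in $\mf{m}_{j_0}$, so $(a - \bar\ga(a))^k b \in \mf{m}_{j_0}^{\,k} = 0$ for $k \gg 0$. Conversely if $b = \sum_j b_j \in G_{\cA_V}(\ga)$, then for $j \neq j_0$ we have $\bar\ga(e_j) = 0$ and $e_j^{\,k} b = b_j$ for all $k$, forcing $b_j = 0$. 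Hence $G_{\cA_V}(\ga) = B_{j_0} = B e_{j_0}$ is a cyclic $B$-module, and therefore a cyclic $\cA$-module.

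The only place where the Frobenius hypothesis is essential is the dimension count in (ii): without it, $\mf{m}_\ga^{\perp}$ — the socle of the local factor at $\mf{m}_\ga$ — may be higher-dimensional, and the eigenspaces would be merely nonzero rather than one-dimensional. I expect this identification of the eigenspace with $\mf{m}_\ga^{\perp}$, together with extracting its dimension from nondegeneracy, to be the main point; the reduction to the regular representation in the first paragraph is conceptually central but technically routine.
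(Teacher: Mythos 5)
Your proof is correct, and since the paper does not prove this lemma itself but only cites \cite[Lemma 1.3]{Lu20}, the relevant comparison is with that reference: your argument --- identifying $V$ with the regular representation of $\cA_V$ via a cyclic vector, computing the centralizer for (i), identifying eigenspaces with $\mathrm{Ann}_B(\mf{m})=\mf{m}^{\perp}$ and using nondegeneracy of the Frobenius form for the dimension count in (ii), and reading off generalized eigenspaces as the local factors $Be_{j}$ for (iii) --- is precisely the standard proof given there. No gaps; your closing remark correctly isolates the Frobenius hypothesis as entering only in the one-dimensionality of eigenspaces.
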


\begin{rem}
In \cite{Lu20}, the $\cA$-module $V$ which satisfies the hypothesis of \lemref{Lu20-2} is said to be perfectly integrable.
\end{rem}

\subsection{The Gaudin algebra of $\glm$}

Fix $\uz:=(z_1, \ldots, z_\ell) \in \Xl$.
Let $V_1, \ldots, V_\ell$ be finite-dimensional irreducible $\glm$-modules and
\begin{equation} \label{dominant}
\bn V =V_1  \otimes \cdots \otimes  V_\ell .
\end{equation}
Note that $V_i$'s are highest weight $\glm$-modules with dominant integral highest weights. For each $i=1, \ldots, \ell$, let $v_i$ be a highest weight vector of $V_i$.
The {\em Shapovalov form} $S_i$ is the unique nondegenerate symmetric bilinear form on $V_i$ defined by
$$
S_i(v_i, v_i)=1 \quad {\rm and} \quad S_i(E_{r,s} v, w)=S_i(v, E_{s,r} w)
$$
for all $v, w \in V_i$ and $r, s=1, \ldots, m$ (\cite{Sh}). If $v$ and $w$ are weight vectors of $V_i$ of distinct weights, then $S_i(v, w)=0$.
The bilinear forms $S_1, \ldots, S_\ell$ induce a nondegenerate symmetric bilinear form
\begin{equation} \label{Sh}
S:=S_1 \otimes \cdots \otimes S_\ell
\end{equation}
on $\bn V $, which is called the {\em tensor Shapovalov form}. The restriction of $S$ to the singular space $\bn V^\sing$ is nondegenerate as well.
By \cite[Theorem 9.1]{MTV06}, $\Bm$ is symmetric with respect to $S$.

 The following is a consequence of \cite[Main Theorem]{Ryb}.

 \begin{thm}  \label{glm-cyclic}
For $\uz \in \Xl$, $\bn V^\sing$ is a cyclic $\Bm$-module.
\end{thm}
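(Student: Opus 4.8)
\textbf{Proof proposal for \thmref{glm-cyclic}.}

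The plan is to deduce cyclicity of $\bn V^\sing$ over $\Bm$ from Rybnikov's theorem, which (in one of its standard formulations) asserts that for $\uz \in \Xl$ and for the tensor product $\bn V$ of finite-dimensional irreducible $\glm$-modules, the Bethe algebra $\Bm$ acting on $\bn V$ has a cyclic vector on each isotypic component, or equivalently that the joint spectrum separates a cyclic vector after passing to weight/singular subspaces. Concretely, I would first recall that $\Bm$ commutes with the $\glm$-action, so $\bn V^\sing$ is a $\Bm$-submodule of $\bn V$, and that $\bn V$ decomposes as a $\glm \times \Bm$-module into pieces $L(\mu) \otimes \bn V^\sing_\mu$ where $L(\mu)$ ranges over the irreducible $\glm$-constituents and $\bn V^\sing_\mu$ carries the $\Bm$-action. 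Thus it suffices to produce, for each dominant $\mu$, a vector in $\bn V^\sing_\mu$ that generates it over $\Bm$; taking the direct sum of these over all $\mu$ gives a cyclic vector for $\bn V^\sing$.

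The key step is then to invoke \cite[Main Theorem]{Ryb}: Rybnikov proves that the $\Bm$-module $\bn V$ (or the relevant weight subspace) is generated by the tensor product of highest weight vectors, or more precisely that the action of $\Bm$ on the space of singular vectors of a fixed weight is cyclic, generated by the image under a suitable projection of $v_1 \otimes \cdots \otimes v_\ell$. I would phrase this as follows: let $v_\mu \in \bn V^\sing_\mu$ be (a nonzero multiple of) the image of the highest-weight tensor under the projection onto the $\mu$-isotypic component; Rybnikov's theorem gives $\Bm \cdot v_\mu = \bn V^\sing_\mu$. Summing, $\sum_\mu v_\mu$ is a cyclic vector for $\bn V^\sing$. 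Alternatively, if one prefers the formulation of Rybnikov's result in terms of the action on the whole tensor product, one applies it to $\bn V$ directly to get a cyclic vector $w \in \bn V$ for $\Bm$, and then observes that since $\Bm \subseteq \End_{\glm}(\bn V)$ the cyclic vector can be taken inside $\bn V^\sing$ (project $w$ onto the direct sum of singular spaces and use that the $\glm$-action and $\Bm$-action together generate, together with Schur's lemma on each isotypic block).

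The main obstacle is bookkeeping: matching the precise statement of \cite[Main Theorem]{Ryb} — which is typically stated for $\Bm$ acting on the full tensor product with a distinguished generator — to the statement about the singular space $\bn V^\sing$ needed here. One must be careful that the isotypic decomposition of $\bn V$ under $\glm$ is compatible with the $\Bm$-action (it is, since $[\Bm, \glm] = 0$), that each $\bn V^\sing_\mu$ is exactly the multiplicity space $\mathrm{Hom}_{\glm}(L(\mu), \bn V)$ as a $\Bm$-module, and that cyclicity on $\bn V$ descends to cyclicity on each multiplicity space. None of these points is deep, but assembling them correctly — and in particular checking that the generator furnished by Rybnikov's argument really does project to a generator on each $\bn V^\sing_\mu$ rather than merely generating after mixing the blocks — is where the care is required. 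Once that is settled, the conclusion $\bn V^\sing = \Bm \cdot v$ for a single explicit $v$ follows immediately.
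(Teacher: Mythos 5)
Your approach is essentially the same as the paper's: \thmref{glm-cyclic} is stated there with no argument beyond the citation of \cite[Main Theorem]{Ryb}, which is precisely the result you invoke, and your reduction to the singular weight spaces $\bn V^\sing_\mu$ via $[\Bm,\glm]=0$ is the standard way to read that citation. The one step you pass over quickly — that the sum $\sum_\mu v_\mu$ of cyclic vectors for the individual $\bn V^\sing_\mu$ is cyclic for their direct sum — does need a word (a direct sum of cyclic modules over a commutative algebra need not be cyclic), but it holds here because the characters of $\Bm$ on distinct $\bn V^\sing_\mu$ already differ on the centre of the diagonally embedded $U(\glm)$ contained in $\Bm$; the paper does not spell this out either.
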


\begin{rem} \label{rem-Frob}
The Gaudin algebra $\Bm_{\bn V^\sing}$ is a Frobenius algebra by \lemref{Lu20-1}, and hence Properties (i)--(iii) in \lemref{Lu20-2} are satisfied automatically.
\end{rem}

The diagonalization of $\Bm_{\bn V^\sing}$ is obtained as a consequence of \cite[Main Corollary]{Ryb} (see also \cite{MTV09-2, MTV09-3}).

\begin{thm} \label{glm-diag}
For a generic $\uz \in \Xl$, $\Bm_{\bn V^\sing}$ is diagonalizable with a simple spectrum.
\end{thm}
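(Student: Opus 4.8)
The plan is to deduce the statement from \cite[Main Corollary]{Ryb} (see also \cite{MTV09-2,MTV09-3}) together with the structural facts already recorded. First I would note that, by \propref{CFR}, $\Bm$ is generated by the coefficients of $\cdet\big(\cL_m(\uz)\big)$, so it coincides with the Gaudin (Bethe) subalgebra of $U(\glm)^{\otimes\ell}$ that \cite{Ryb} attaches to the finite-dimensional irreducible modules $V_1,\dots,V_\ell$ placed at the pairwise distinct points $z_1,\dots,z_\ell$. Rybnikov's Main Corollary then provides a nonempty Zariski-open --- hence dense, since $\Xl$ is irreducible --- subset $U\subseteq\Xl$ such that for $\uz\in U$ the algebra $\Bm_{\bn V^\sing}$ acts semisimply on $\bn V^\sing$; I would take ``generic $\uz$'' to mean $\uz\in U$.

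To secure the ``simple spectrum'' clause (in case one argues from bare semisimplicity) I would bring in \thmref{glm-cyclic}: $\bn V^\sing$ is a cyclic $\Bm$-module for every $\uz\in\Xl$. By \remref{rem-Frob} the algebra $\Bm_{\bn V^\sing}$ is Frobenius, so \lemref{Lu20-2}(ii) forces each of its eigenspaces to be one-dimensional, whence distinct eigenvectors have distinct eigenvalues. (Directly: if $v$ generates $\bn V^\sing$ and $\bn V^\sing=\bigoplus_\chi E_{\Bm_{\bn V^\sing}}(\chi)$, write $v=\sum_\chi v_\chi$; then $\Bm_{\bn V^\sing}\,v\subseteq\bigoplus_{\chi:\,v_\chi\neq0}\C v_\chi$, and comparing this with $\bn V^\sing$ forces $v_\chi\neq0$ and $\dim E_{\Bm_{\bn V^\sing}}(\chi)=1$ for all $\chi$.) Combined with the previous paragraph this gives the claim.

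I expect the main obstacle to be not any of the above --- which is essentially formal --- but the import from \cite{Ryb}. Even genericity is soft once a single good point is known: fixing one element $H(\uz)\in\Bm_{\bn V^\sing}$ whose matrix entries are regular in $\uz$ on $\Xl$, the locus where the discriminant of the characteristic polynomial of $H(\uz)$ is nonzero is Zariski-open, and on it $\Bm_{\bn V^\sing}$, being commutative and containing $H(\uz)$, is contained in the commutant $\C[H(\uz)]$ of $H(\uz)$ and hence diagonalizable --- after which the cyclicity argument above upgrades this to a simple spectrum (and in fact $\Bm_{\bn V^\sing}=\C[H(\uz)]$ there, since $\dim\Bm_{\bn V^\sing}=\dim\bn V^\sing$ by \remref{rem-Frob} and \lemref{Lu20-2}(i)). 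This locus is nonempty as soon as some $H(\uz)$ has distinct eigenvalues at a single $\uz$, equivalently as soon as $\Bm_{\bn V^\sing}$ has no nilpotents for even one $\uz$; that non-emptiness is precisely the hard analytic/geometric input supplied by \cite{Ryb} (and, via the Wronski map and separation of variables, by \cite{MTV09-2,MTV09-3}). I would therefore simply invoke it, after verifying the bookkeeping that Rybnikov's hypotheses cover arbitrary finite-dimensional irreducible $\glm$-modules and tensor products thereof, and that his notion of ``generic'' agrees with ``belonging to a Zariski-dense open subset of $\Xl$''.
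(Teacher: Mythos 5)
Your proposal matches the paper's treatment: the paper gives no argument beyond citing \cite[Main Corollary]{Ryb} (with \cite{MTV09-2, MTV09-3} as alternatives) for the diagonalizability, and the simple-spectrum clause is exactly the combination of \thmref{glm-cyclic} with \remref{rem-Frob} and \lemref{Lu20-2}(ii) that you spell out. Your extra remarks on Zariski-openness and the direct cyclicity argument are correct but supplementary; the essential content and the external input invoked are the same.
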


\subsection{Proofs of the main results} \label{pf}
Fix $\uz \in \Xl$ and irreducible modules $L_1, \ldots, L_\ell \in \Cmn$. Let
$$
\un L :=L_1 \otimes \cdots \otimes  L_\ell .
$$

Any singular weights of $\un L $, $L_1, \ldots, L_\ell$ are of the forms $\ovlamn$ for some $\la \in \Pmn$. There are only finitely many of them since $\un L$ is finite-dimensional. Choose $r \in \Zp$ large enough that $l(\la) \le m+r$ for all such $(m|n)$-hook partitions $\la$, where $l(\la)$ denotes the length of $\la$.
For each $i=1, \ldots, \ell$, there is an  irreducible module $\mr{L}_i$ in $\Cmrn$ such that $\otr_{\mn}(\mr{L}_i)=L_i$ by \propref{truncation}.
Let $\mr{\un L}$ be the $\glmrn$-module defined to be
\begin{equation} \label{mrL}
\mr{\un L}:=\mr{L}_1  \otimes \cdots \otimes \mr{L}_\ell.
\end{equation}

Set
\begin{equation} \label{wt}
W=\setc*{\! \la \in \Pmn}{ \mbox{$ \ovlamn$ is a singular weight of  $\un L$}\!}.
\end{equation}
For any $\la \in W$, let
$$
\phi_{\ovla^{\mrn}}^m:  \mr{\un L}^\sing_{\ovla^{\mrn}} \longrightarrow \mr{\un L}^{{\si_m}\mbox{-}\sing}_{\ovla^{\si_m}}
$$
be the $\fB_{\mrn}$-module isomorphism defined as in \propref{B-isom}, where $\si_m \in \fS_{m+r+n}$ is defined as in \eqref{sip} such that $\s_{\mrn}^{\si_m}=(0^m, 1^n, 0^r)$ if $\s_{\mrn}:=(0^{m+r}, 1^n)$.
By \corref{sip-wt},
\begin{equation} \label{sim-wt}
\mr{\un L}^{{\si_m}\mbox{-}\sing}_{\ovla^{\si_m}}=\otr_{m|n}(\mr{\un L})^\sing_{\ovlamn}.
\end{equation}
Moreover, $\ovla^{\mrn}(\Eii)=0$ for all $i\in \mathbb{I}_{\mrn}$ with $i> m+r$ by our choice of $r$.
In other words, $\ovla^{\mrn}$ can be regarded as a weight in ${\mf X}^+_{\mrz}$, and
 \begin{equation} \label{even-wt}
 \mr{\un L}^\sing_{\ovla^{\mrn}}=\otr_{(m+r)|0}(\mr{\un L})^\sing_{\ovla^{\mrn}}.
\end{equation}
Note that for each $i=1, \ldots, \ell$, $\otr_{(m+r)|0} \big(\mr{L}_i \big)=L_{(m+r)|0}(\eta_i)$ for some $\eta_i \in {\mf X}^+_{\mrz}$.

\begin{thm}\label{cyc+frob}

For $\uz \in \Xl$, we have:

\begin{enumerate}[\normalfont(i)]
\item $\un L^\sing$ is a cyclic $\Bmn$-module.

\item $\Bmn_{\un L^\sing}$ is a Frobenius algebra.
\end{enumerate}

\end{thm}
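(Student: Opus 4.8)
The plan is to reduce the statement about $\glmn$ to the already-known results for the ordinary general linear Lie algebra $\gl_{m+r}$ (\thmref{glm-cyclic} and \remref{rem-Frob}), using the truncation and odd-reflection machinery set up in the previous section. The key chain of identifications is
\[
\un L^\sing = \bigoplus_{\la \in W} \un L^\sing_{\ovlamn},
\qquad
\un L^\sing_{\ovlamn} \;\cong\; \mr{\un L}^\sing_{\ovla^{\mrn}}
\]
as $\Bmn$-modules, and then — via \propref{B-isom} and \propref{trunc} — a further identification with the $\gl_{m+r}$-singular space of a tensor product of irreducible \emph{polynomial} $\gl_{m+r}$-modules, on which $\Bmn$ acts through $\fB_{(m+r)|0}(\uz)$.

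\textbf{Step 1: pass to $\glmrn$.} For each $\la \in W$, \propref{truncation} gives irreducible $\mr L_i \in \Cmrn$ with $\otr_{\mn}(\mr L_i) = L_i$, and \propref{trunc}(ii) (applied with $k=n$, $m$ replaced by $m+r$) shows that $\Bmn$ acts on $\otr_{\mn}(\mr{\un L})^\sing_{\ovlamn} = \mr{\un L}^{\si_m\text{-}\sing}_{\ovla^{\si_m}}$ through $\fB_{\mrn}(\uz)$, with the explicit relation $\Ber(\cL_{\mrn}(\uz))\,v = \Ber(\cLmn(\uz))\,\pz^{\,r}\,v$ on that space. The isomorphism $\phi^m_{\ovla^{\mrn}}$ of \propref{B-isom} then identifies $\mr{\un L}^\sing_{\ovla^{\mrn}}$ with $\mr{\un L}^{\si_m\text{-}\sing}_{\ovla^{\si_m}}$ as $\fB_{\mrn}(\uz)$-modules; composing, $\un L^\sing_{\ovlamn} \cong \mr{\un L}^\sing_{\ovla^{\mrn}}$ as $\Bmn$-modules, where the $\Bmn$-action on the right is the one inherited through $\fB_{\mrn}(\uz)$ and the power of $\pz$.

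\textbf{Step 2: pass to $\gl_{m+r}$.} By the choice of $r$ we have $\ovla^{\mrn}(\Eii)=0$ for all $i>m+r$, so \eqref{even-wt} gives $\mr{\un L}^\sing_{\ovla^{\mrn}} = \otr_{(m+r)|0}(\mr{\un L})^\sing_{\ovla^{\mrn}}$, and \propref{trunc}(ii) once more (now with $k=0$) shows this is a $\fB_{m+r}(\uz)$-module via $\Ber(\cL_{m+r}(\uz))\,v = \Ber(\cL_{\mrn}(\uz))\,\pz^{-n}\,v$. Since each $\otr_{(m+r)|0}(\mr L_i) = L_{(m+r)|0}(\eta_i)$ is a finite-dimensional irreducible $\gl_{m+r}$-module, the space $\bigoplus_{\la\in W}\otr_{(m+r)|0}(\mr{\un L})^\sing_{\ovla^{\mrn}}$ is a direct summand of $\mathbf V^\sing$ for $\mathbf V := L_{(m+r)|0}(\eta_1)\otimes\cdots\otimes L_{(m+r)|0}(\eta_\ell)$; in fact, since the remaining singular weights of $\mathbf V$ are of the form $\ovmu^{\mrz}$ for $\mu \in W$ (these being exactly the singular weights of $\un L$ lifted), one gets $\bigoplus_{\la\in W}\otr_{(m+r)|0}(\mr{\un L})^\sing_{\ovla^{\mrn}} = \mathbf V^\sing$ outright. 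Stacking the two identifications, $\un L^\sing \cong \mathbf V^\sing$ as modules over a single commutative algebra, namely $\Bmn$ acting through $\fB_{m+r}(\uz)$.

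\textbf{Step 3: conclude.} Now \thmref{glm-cyclic} says $\mathbf V^\sing$ is a cyclic $\fB_{m+r}(\uz)$-module, hence $\un L^\sing$ is a cyclic $\Bmn$-module, proving (i). For (ii), the tensor Shapovalov form $S = S_1\otimes\cdots\otimes S_\ell$ on $\mathbf V$ restricts to a nondegenerate symmetric form on $\mathbf V^\sing$ with respect to which $\fB_{m+r}(\uz)$ is symmetric (\cite[Theorem 9.1]{MTV06}); transporting this form along the isomorphism of Step 2 gives a nondegenerate symmetric form on $\un L^\sing$ with respect to which $\Bmn$ is symmetric, so \lemref{Lu20-1} yields that $\Bmn_{\un L^\sing}$ is a Frobenius algebra.

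\textbf{The main obstacle} I anticipate is bookkeeping rather than conceptual: one must check carefully that the various isomorphisms ($\phi^m_{\ovla^{\mrn}}$, the truncation identifications, and the two applications of \propref{trunc}) are \emph{compatible} — i.e. that the same generators of $\Bmn$ act on all the identified spaces in the way dictated by the stated formulas, so that a cyclic vector (resp. the symmetric bilinear form) on the $\gl_{m+r}$ side genuinely transports to one on the $\glmn$ side. The extra factors $\pz^{\,r}$ and $\pz^{-n}$ from \propref{trunc} merely multiply the generators by scalars depending on $\la$ on each weight space, so they do not disturb cyclicity or symmetry, but this must be said explicitly. A second point to handle with care is that the direct-sum decomposition $\un L^\sing = \bigoplus_{\la\in W}\un L^\sing_{\ovlamn}$ is a decomposition \emph{as $\Bmn$-modules} (because $\Bmn$ commutes with $\hmn$), and likewise for $\mathbf V^\sing$, so that cyclicity and the Frobenius property can be assembled weight-space by weight-space if one prefers, or read off globally from the single identification $\un L^\sing \cong \mathbf V^\sing$.
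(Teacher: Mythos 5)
Your overall strategy coincides with the paper's: truncate up to $\glmrn$, use the odd-reflection isomorphism $\phi^m_{\ovla^{\mrn}}$ of \propref{B-isom} together with \propref{trunc} to transfer the $\Bmn$-action on $\un L^\sing$ to a $\fB_{m+r}(\uz)$-action on a sum of singular weight spaces of $\otr_{(m+r)|0}(\mr{\un L})$, then invoke \thmref{glm-cyclic} and the symmetry of the tensor Shapovalov form, and finish with \lemref{Lu20-1}. The powers of $\pz$ are handled exactly as in the paper.

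There is, however, one genuinely wrong step in your Step 2: the claim that $\bigoplus_{\la\in W}\otr_{(m+r)|0}(\mr{\un L})^\sing_{\ovla^{\mrn}}$ equals all of $\otr_{(m+r)|0}(\mr{\un L})^\sing$ because ``the remaining singular weights are exactly the singular weights of $\un L$ lifted.'' This fails in general: $\mr{\un L}$ may contain irreducible constituents $L_{\mrn}(\ovla^{\mrn})$ whose partitions have length at most $m+r$ (so they survive $\otr_{(m+r)|0}$) but are \emph{not} $(m|n)$-hook (so they are killed by $\otr_{\mn}$ and do not appear in $W$). For example, with $m=n=1$, $L_1=L_2=L_{1|1}(2\ep_1)$ and $r=1$, the constituent of $\mr{\un L}=L_{2|1}(2\ep_1)^{\otimes 2}$ of highest weight $\ov{(2,2)}^{2|1}=2\ep_1+2\ep_2$ survives truncation to $\gl_2$, yet $(2,2)\notin\cP_{1|1}$, so $W=\{(4),(3,1)\}$ and the sum over $W$ is a proper summand. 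Hence you cannot conclude ``$\un L^\sing\cong\mathbf V^\sing$, therefore cyclic'' directly. The repair is what the paper does: the sum over $W$ is a direct summand of the cyclic $\fB_{m+r}(\uz)$-module $\otr_{(m+r)|0}(\mr{\un L})^\sing$ cut out by singular weight spaces, which $\fB_{m+r}(\uz)$ preserves; projecting a cyclic vector onto the summand shows the summand is itself cyclic, and the Shapovalov form restricts nondegenerately to it since distinct weight spaces are $S$-orthogonal. Note that your fallback remark about assembling cyclicity ``weight space by weight space'' does not work in that direction (each summand cyclic does not imply the sum is cyclic --- consider $\C$ acting on $\C\oplus\C$); the implication you need is from the whole to the summand. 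A minor further slip: the identification of the $\Bmn$- and $\fB_{\mrn}(\uz)$-actions on the $\si_m$-singular spaces is \propref{trunc}(i) with $p=m$ applied inside $\gl_{\mrn}$, not part (ii) with $k=n$.
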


\begin{proof}
The above discussion shows that
 $$
 \un L^\sing=\bigoplus_{\la\in W}\otr_{\mn}(\mr{\un L})^\sing_\ovlamn
 =\bigoplus_{\la\in W}\mr{\un L}^{{\si_m}\mbox{-}\sing}_{\ovla^{\si_m}}.
 $$
To prove (i), it is enough to show that $\bigoplus_{\la\in W}\mr{\un L}^{{\si_m}\mbox{-}\sing}_{\ovla^{\si_m}}$ is a cyclic $\fB_{(m+r)|n}$-module by \propref{trunc}(i).
Also, this holds if $\bigoplus_{\la\in W}\mr{\un L}^{\sing}_{\ovla^{\mrn}}$ is a cyclic $\fB_{\mrn}$-module via the isomophism $\disp{\phi:=\bigoplus_{\la \in W} \phi_{\ovla^{\mrn}}^m}$.
According to \thmref{glm-cyclic},
$$
\otr_{(m+r)|0}(\mr{\un L})^\sing=\Big(\bigotimes_{i=1}^\ell \otr_{(m+r)|0} \big(\mr{L}_i \big) \Big)^\sing
$$
 is a nonzero cyclic $\fB_{m+r}$-module, and hence its direct summand
 $$N:=\disp{\bigoplus_{\la\in W} \Big(\bigotimes_{i=1}^\ell \otr_{(m+r)|0} \big(\mr{L}_i \big)   \Big)^\sing_{\ovla^{\mrn}}}$$
 is also a cyclic $\fB_{m+r}$-module as $\fB_{m+r}$ preserves singular weight spaces. By \propref{trunc}(ii), we see that $\disp{\bigoplus_{\la \in W}\mr{\un L}^\sing_{\ovla^{\mrn}}}$ is a cyclic $\fB_{m+r|n}$-module.
 This completes the proof of (i).

To prove (ii), note that the tensor Shapovalov form $S$ on $\otr_{(m+r)|0}(\mr{\un L})^\sing$ (cf. \eqref{Sh}) restricts to a nondegenerate symmetric bilinear form on $N$ with respect to which $\fB_{m+r}$ is symmetric.
By \eqref{sim-wt} and \eqref{even-wt}, we define
$$
\langle v, w \rangle=S \big(\phi^{-1} v, \phi^{-1} w \big), \qquad \mbox{for $v, w \in \un L^\sing$}.
$$
Clearly, $\langle \cdot, \cdot \rangle$ is a nondegenerate symmetric bilinear form on $\un L^\sing$.
By \propref{trunc}, $\Bmn$ is also symmetric with respect to $\langle \cdot, \cdot \rangle$. This proves (ii) in view of \lemref{Lu20-1}.
\end{proof}

\begin{cor} \label{max}
For $\uz \in \Xl$, the following properties hold:

\begin{enumerate} [\normalfont(i)]

\item The algebra $\Bmn_{\un L^\sing}$ is a maximal commutative subalgebra of $\End(\un L^\sing)$ of dimension $\dim \! \left({\un L^\sing} \right)$.

\item Every eigenspace of the algebra $\Bmn_{\un L^\sing}$ is one-dimensional, and the set of eigenspaces of $\Bmn_{\un L^\sing}$ is in bijective correspondence with the set of maximal ideals of $\Bmn_{\un L^\sing}$.

\item Every generalized eigenspace of $\Bmn_{\un L^\sing}$ is a cyclic $\Bmn$-module.

\end{enumerate}
\end{cor}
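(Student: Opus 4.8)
The statement to prove is \corref{max}, which lists three properties of $\Bmn_{\un L^\sing}$. Since \thmref{cyc+frob} has just established that $\un L^\sing$ is a cyclic $\Bmn$-module and that $\Bmn_{\un L^\sing}$ is a Frobenius algebra, this corollary is essentially a direct application of the abstract machinery already recalled in the section on Frobenius algebras.

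The plan is as follows. First I would invoke \thmref{cyc+frob}(i) to get that $V:=\un L^\sing$ is a cyclic $\Bmn$-module, and \thmref{cyc+frob}(ii) to get that $\cA_V:=\Bmn_{\un L^\sing}$ is a Frobenius algebra. These are precisely the two hypotheses of \lemref{Lu20-2} (with $\cA=\Bmn$). Applying \lemref{Lu20-2}(i) then gives property (i): $\Bmn_{\un L^\sing}$ is a maximal commutative subalgebra of $\End(\un L^\sing)$ of dimension $\dim(\un L^\sing)$. Applying \lemref{Lu20-2}(ii) gives property (ii): every eigenspace is one-dimensional and the eigenspaces correspond bijectively to the maximal ideals of $\Bmn_{\un L^\sing}$. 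Applying \lemref{Lu20-2}(iii) gives property (iii): every generalized eigenspace of $\Bmn_{\un L^\sing}$ is a cyclic $\Bmn$-module. That is the whole argument.

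There is no real obstacle here; the work has all been done in \thmref{cyc+frob}, and \corref{max} is just the translation of \lemref{Lu20-2} to the present setting. The only point worth a sentence of care is to note explicitly that the finite-dimensionality required throughout (so that all the Frobenius-algebra statements make sense) is automatic: each $L_i$ is an irreducible object of $\Cmn$, hence finite-dimensional, so $\un L$ and a fortiori $\un L^\sing$ are finite-dimensional. With that observation recorded, the proof is a one-line citation.

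\begin{proof}
By \thmref{cyc+frob}, $\un L^\sing$ is a cyclic $\Bmn$-module and $\Bmn_{\un L^\sing}$ is a Frobenius algebra. Since each $L_i$ is a finite-dimensional irreducible module in $\Cmn$, the space $\un L^\sing$ is finite-dimensional. Hence all three assertions follow by applying \lemref{Lu20-2} with $\cA=\Bmn$ and $V=\un L^\sing$: parts (i), (ii) and (iii) of the corollary are, respectively, parts (i), (ii) and (iii) of \lemref{Lu20-2}.
\end{proof}
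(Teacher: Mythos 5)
Your proof is correct and is exactly the paper's argument: the paper also deduces the corollary by combining \thmref{cyc+frob} with \lemref{Lu20-2}. Your added remark on the finite-dimensionality of $\un L^\sing$ is a harmless (and accurate) extra precaution.
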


\begin{proof}
This follows from \thmref{cyc+frob} and \lemref{Lu20-2}.
\end{proof}

Recall the expansion $\disp{\Ber \big(\cLmn(\uz) \big) = \sum_{k=-\infty}^{m-n}b_k (z)\pz^k}$ in \eqref{Ber-exp}.
Let $V$ be a $\Bmn$-module and $v$ an eigenvector of $\Bmn_V$. Then there exists $\al_k(z) \in \C \blb z^{-1} \brb$ such that $b_k(z) v=\al_k(z) v$ for $k \le m-n$.
We call
\begin{equation}\label{diff}
\cD_{\tns v}:= \sum_{k=-\infty}^{m-n} \al_k(z)  \pz^k
\end{equation}
the scalar differential operator associated to $(\Ber \big(\cLmn(\uz) \big), v)$.
Clearly, $\Ber \big(\cLmn(\uz) \big)  v=\cD_{\tns v} v$.
For $n=0$, $\cdet(\cL_m(\uz))  v=\cD_{\tns v} v$ by \propref{CFR}.

We will answer affirmatively the diagonalization of the action of $\Bmn$ on the singular space of any $\ell$-fold tensor product of irreducible polynomial $\glmn$-modules. To achieve this, we need the following proposition concerning eigenbases.
For any $\la \in W$, let $\ga=\ovlamn$ and $\mr \ga=\ovla^{\mrn}$. Recall that $r$ is chosen such that $l(\la) \le m+r$.

\begin{prop} \label{eigen}

\begin{enumerate} [\normalfont(i)]

\item Suppose that ${\rm B}$ is an eigenbasis for $\fB_{m+r}(\uz)_{{\mr{\un L}}^\sing_{\mr \ga}}$. Then $\phi_{\mr \ga}^m ({\rm B})$ is an eigenbasis for $\Bmn_{\un L^\sing_{\ga}}$.
Moreover, for any $v \in {\rm B}$,
$$
\Ber \big(\cLmn(\uz) \big)  \big( \phi_{\mr \ga}^m (v) \big) = \cD_v \pz^{-n-r} \big( \phi_{\mr \ga}^m (v) \big),
$$
where $\cD_v$ is the scalar differential operator associated to $\big(\cdet(\cL_{m+r})(\uz), v \big)$.

\item Suppose that ${\rm B}^\prime$ is an eigenbasis for $\Bmn_{\un L^\sing_{\ga}}$. Then $(\phi_{\mr \ga}^m \big)^{-1}({\rm B}^\prime)$ is an eigenbasis for $\fB_{m+r}(\uz)_{{\mr{\un L}}^\sing_{\mr \ga}}$.
Moreover, for any $w \in {\rm B}^\prime$,
$$
\cdet(\cL_{m+r}(\uz)) \big((\phi_{\mr \ga}^m)^{-1} (w) \big)  = \cD_w^\prime \pz^{n+r} \big( (\phi_{\mr \ga}^m)^{-1} (w) \big),
$$
where $\cD_w^\prime$ is the scalar differential operator associated to $\big(\Ber \big(\cLmn(\uz) \big), w \big)$.

\end{enumerate}
\end{prop}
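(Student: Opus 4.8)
The plan is to transport everything through the $\fB_{(m+r)|n}$-module isomorphism $\phi:=\phi_{\mr \ga}^m$ of \propref{B-isom} and to read off the two $\fB_{(m+r)|n}$-actions sitting at its ends via \propref{trunc}. First I would record the chain of identifications
$$
\mr{\un L}^\sing_{\mr \ga}\ \xrightarrow{\ \phi\ }\ \mr{\un L}^{{\si_m}\mbox{-}\sing}_{\ovla^{\si_m}}\ =\ \otr_{m|n}(\mr{\un L})^\sing_{\ovlamn}\ =\ \un L^\sing_{\ga},
$$
the first equality being \corref{sip-wt} and the second holding because $\otr_{m|n}(\mr{L}_i)=L_i$ for each $i$. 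Then I would apply \propref{trunc} twice, with $(m+r)|n$ in place of $m|n$: on $\mr{\un L}^\sing_{\mr \ga}$, the weight $\mr \ga=\ovla^{\mrn}$ lies in ${\mf X}^+_{\mrz}$, so part (ii) with $k=0$ (whose hypothesis $\mr \ga(E_{\hf,\hf})=0$ is then automatic), together with \propref{CFR}, shows that $\Ber\big(\cL_{\mrn}(\uz)\big)$ acts on $\mr{\un L}^\sing_{\mr \ga}$ as $\cdet\big(\cL_{m+r}(\uz)\big)\pz^{-n}$, and in particular $\fB_{(m+r)|n}$ and $\fB_{m+r}(\uz)$ have the same image in $\End\big(\mr{\un L}^\sing_{\mr \ga}\big)$; on $\mr{\un L}^{{\si_m}\mbox{-}\sing}_{\ovla^{\si_m}}=\un L^\sing_{\ga}$, part (i) with $p=m$ shows that $\Ber\big(\cL_{\mrn}(\uz)\big)$ acts as $\Ber\big(\cLmn(\uz)\big)\pz^{r}$, and $\fB_{(m+r)|n}$ and $\Bmn$ have the same image in $\End\big(\un L^\sing_{\ga}\big)$.

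Next I would use that the coefficients of the pseudo-differential operator $\Ber\big(\cL_{\mrn}(\uz)\big)$ lie in $\fB_{(m+r)|n}$, so that $\phi$ (being $\fB_{(m+r)|n}$-linear, by \propref{B-isom}) commutes with it coefficientwise in the sense of \eqnref{pdiff}; likewise for $\phi^{-1}$. For (i), take $v\in{\rm B}$; by \propref{Bmn-gen} and \propref{CFR}, which identify $\fB_{m+r}(\uz)$ with the subalgebra generated by the coefficients of $\cdet\big(\cL_{m+r}(\uz)\big)$, being an eigenvector of $\fB_{m+r}(\uz)_{\mr{\un L}^\sing_{\mr \ga}}$ amounts to $\cdet\big(\cL_{m+r}(\uz)\big)v=\cD_v v$ for the scalar differential operator $\cD_v$. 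Chaining the identities of the first paragraph,
$$
\Ber\big(\cL_{\mrn}(\uz)\big)\big(\phi(v)\big)=\phi\Big(\Ber\big(\cL_{\mrn}(\uz)\big)v\Big)=\phi\Big(\cdet\big(\cL_{m+r}(\uz)\big)\pz^{-n}v\Big)=\cD_v\,\pz^{-n}\big(\phi(v)\big),
$$
while this same left-hand side also equals $\Ber\big(\cLmn(\uz)\big)\pz^{r}\big(\phi(v)\big)$. Equating these and matching coefficients of powers of $\pz$ in $\un L^\sing_{\ga}\blb z^{-1},\pz^{-1}\brb$ gives $\Ber\big(\cLmn(\uz)\big)\big(\phi(v)\big)=\cD_v\,\pz^{-n-r}\big(\phi(v)\big)$; in particular each Gaudin Hamiltonian $b_k(z)$ acts on $\phi(v)$ by a scalar, so by \propref{Bmn-gen} $\phi(v)$ is an eigenvector of $\Bmn_{\un L^\sing_{\ga}}$, and since $\phi$ is bijective $\phi_{\mr \ga}^m({\rm B})$ is an eigenbasis. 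For (ii) I would run the mirror computation through $\phi^{-1}$: for $w\in{\rm B}^\prime$ with $\Ber\big(\cLmn(\uz)\big)w=\cD_w^\prime w$ one gets $\Ber\big(\cL_{\mrn}(\uz)\big)\big(\phi^{-1}(w)\big)=\cD_w^\prime\,\pz^{r}\big(\phi^{-1}(w)\big)$, which also equals $\cdet\big(\cL_{m+r}(\uz)\big)\pz^{-n}\big(\phi^{-1}(w)\big)$, and matching powers of $\pz$ yields $\cdet\big(\cL_{m+r}(\uz)\big)\big(\phi^{-1}(w)\big)=\cD_w^\prime\,\pz^{n+r}\big(\phi^{-1}(w)\big)$, showing $\phi^{-1}(w)$ is an eigenvector of $\fB_{m+r}(\uz)_{\mr{\un L}^\sing_{\mr \ga}}$.

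The step I expect to require the most care is the pseudo-differential bookkeeping, not any conceptual difficulty: I must check that $\cdet\big(\cL_{m+r}(\uz)\big)\pz^{-n}$ and $\Ber\big(\cLmn(\uz)\big)\pz^{r}$ genuinely represent the $\fB_{(m+r)|n}$-action on the two subspaces in the sense of \eqnref{pdiff}; that a $\fB_{(m+r)|n}$-linear map commutes coefficientwise with such pseudo-differential operators whose coefficients lie in $\fB_{(m+r)|n}$; and that an identity $\sum_i \al_i(z)\,u\,\pz^{i-n}=\sum_k c_k(z)\,u\,\pz^{k+r}$ in $\un L^\sing_{\ga}\blb z^{-1},\pz^{-1}\brb$ with $u\neq 0$ forces $\al_{k+r+n}(z)=c_k(z)$ for all $k$, so that the shifts $\pz^{-n}$ and $\pz^{r}$ may be merged into $\pz^{-n-r}$ in (i) and, after inverting the roles, into $\pz^{n+r}$ in (ii). Granting this, the proposition follows formally from \propref{B-isom}, \corref{sip-wt}, \propref{trunc}, \propref{CFR} and \propref{Bmn-gen}.
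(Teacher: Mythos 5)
Your proposal is correct and follows essentially the same route as the paper: it combines \propref{B-isom}, both parts of \propref{trunc} (applied with $(m+r)|n$ in place of $m|n$, taking $p=m$ and $k=0$), and \propref{CFR}, and the coefficient-matching in powers of $\pz$ that you flag as the delicate point is exactly how the paper itself handles the formal action \eqnref{pdiff}. The only cosmetic difference is that the paper writes $\phi_{\mr\ga}^m(v)=A_{\mr\ga}v$ explicitly and commutes $A_{\mr\ga}$ through the Berezinian, whereas you invoke the $\fB_{(m+r)|n}$-linearity of $\phi_{\mr\ga}^m$ abstractly; these are the same argument.
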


\begin{proof}
We will only prove (i). The proof of (ii) is similar.
Let ${\rm B}$ be an eigenbasis for $\fB_{m+r}(\uz)_{{\mr{\un L}}^\sing_{\mr \ga}}$ and $v \in {\rm B}$.
Again, we drop $\uz$.
We have
$\Ber(\cL_{m+r}) v =\cdet(\cL_{m+r}) v= \cD_v v.$
Recall that $\phi_{\mr \ga}^m (v)= A_{\mr \ga} v$ for some $A_{\mr \ga} \in U(\gl_{m+r|n})$.
Using \propref{B-isom} and \propref{trunc}, we find that
\begin{eqnarray*}
\Ber(\cLmn) \big( \phi_{\mr \ga}^m (v) \big) \ns \nns &=& \nns \Ber(\cL_{\mrn}) \, \pz^{-r} \big( A_{\mr \ga} v \big) \\
\nns &=& \nns A_{\mr \ga} \,  \Ber(\cL_{\mrn}) \, \pz^{-r} v \\
\nns &=& \nns A_{\mr \ga} \, \Ber(\cL_{m+r})\,  \pz^{-n-r} v \\
\nns &=& \nns A_{\mr \ga} \,  \cD_v \pz^{-n-r} v \\
\nns &=& \nns \cD_v \pz^{-n-r}  \big(\phi_{\mr \ga}^m (v) \big).
\end{eqnarray*}
This also means that $\phi_{\mr \ga}^m (v) $ is an eigenvector for $\Bmn_{\un L^\sing_{\ga}}$, and (i) follows.
\end{proof}

 \begin{thm} \label{diag}
For a generic $\uz \in \Xl$, $\Bmn_{\un L^\sing}$ is diagonalizable with a simple spectrum.
\end{thm}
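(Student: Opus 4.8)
The plan is to reduce the diagonalization of $\Bmn_{\un L^\sing}$ to the already-known diagonalization of the Gaudin algebra of a general linear Lie algebra (\thmref{glm-diag}), using exactly the chain of isomorphisms assembled in \secref{pf}. First I would observe that $\Bmn_{\un L^\sing}$ acts block-diagonally along the decomposition $\un L^\sing=\bigoplus_{\la\in W}\un L^\sing_{\ovlamn}$ into singular weight spaces, since $\Bmn$ commutes with $\glmn$ and hence preserves each $\hmn$-weight space. Thus it suffices to show each $\Bmn_{\un L^\sing_{\ovlamn}}$ is diagonalizable with simple spectrum, and then argue that for a generic $\uz$ the spectra of the various blocks are pairwise disjoint so that the whole algebra has simple spectrum.

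For a fixed $\la\in W$, set $\ga=\ovlamn$ and $\mr\ga=\ovla^{\mrn}$. By \propref{eigen}(i)–(ii), $\phi_{\mr\ga}^m$ gives a bijection between eigenbases of $\fB_{m+r}(\uz)_{\mr{\un L}^\sing_{\mr\ga}}$ and eigenbases of $\Bmn_{\un L^\sing_{\ga}}$; moreover, by \eqref{even-wt}, $\mr{\un L}^\sing_{\mr\ga}$ is the $\mr\ga$-weight space inside $\otr_{(m+r)|0}(\mr{\un L})^\sing=\big(\bigotimes_{i=1}^\ell L_{(m+r)|0}(\eta_i)\big)^\sing$, an $\ell$-fold tensor product of finite-dimensional irreducible $\gl_{m+r}$-modules. \thmref{glm-diag} applies verbatim to this tensor product: for a generic $\uz$, $\fB_{m+r}(\uz)$ acts diagonalizably with simple spectrum on its singular space, and in particular on the direct summand $\mr{\un L}^\sing_{\mr\ga}$ (a union of singular weight spaces, which $\fB_{m+r}$ preserves). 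Transporting this along $\phi_{\mr\ga}^m$, I obtain that $\Bmn_{\un L^\sing_{\ga}}$ is diagonalizable with simple spectrum for a generic $\uz$. Since $W$ is finite, the intersection over $\la\in W$ of the (co-finite, or at least generic) good loci for $\uz$ is again generic, so $\Bmn_{\un L^\sing_{\ga}}$ is simultaneously diagonalizable with simple spectrum on every block for a common generic $\uz$.

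It remains to rule out coincidences of eigenvalues across distinct blocks. Here I would use the explicit eigenvalue formula from \propref{eigen}(i): for $v$ an eigenvector in the $\mr\ga$-block, the eigenvalue of $\Ber(\cLmn(\uz))$ on $\phi_{\mr\ga}^m(v)$ is recorded by the scalar pseudo-differential operator $\cD_v\,\pz^{-n-r}$, where $\cD_v=\cdet(\cL_{m+r}(\uz))$-eigenvalue of $v$ is a Fuchsian differential operator of order $m+r$ whose singular points $z_1,\dots,z_\ell$ carry exponents determined by the highest weights $\eta_i$ of the modules $L_{(m+r)|0}(\eta_i)$, and whose exponents at $\infty$ encode $\mr\ga$ (equivalently $\la$). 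Distinct $\la\in W$ give distinct weights $\mr\ga$, hence distinct exponent data at $\infty$, hence distinct operators $\cD_v$; and the passage $\cD_v\mapsto\cD_v\pz^{-n-r}$ is injective. Therefore eigenvalues coming from different blocks are automatically different — independently of $\uz$ — while within each block simplicity holds for a generic $\uz$. Combining, $\Bmn_{\un L^\sing}$ is diagonalizable with a simple spectrum for a generic $\uz\in\Xl$.

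The main obstacle I anticipate is the cross-block separation of spectra: one must be careful that the eigenvalue $\al_k(z)$ of $b_k(z)$ genuinely determines, and is determined by, the Fuchsian operator together with its exponents at $\infty$, so that two eigenvectors lying in blocks indexed by different $\la$ cannot share all the scalars $\al_k(z)$. This is where the structure theory of the image of $\cdet(\cL_{m+r}(\uz))$ — specifically that its eigenvalues are monic Fuchsian operators with the prescribed regular singular points and exponents (\`a la Mukhin–Tarasov–Varchenko) — is doing the real work; everything else is bookkeeping with the isomorphisms $\phi_{\mr\ga}^m$, the truncation functors, and the genericity of a finite intersection of conditions on $\uz$.
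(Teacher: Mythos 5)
Your argument is correct, and its first half (diagonalizability) is exactly the paper's route: reduce block by block over $\la\in W$, transport eigenbases from $\fB_{m+r}(\uz)_{\mr{\un L}^\sing_{\mr\ga}}$ via $\phi_{\mr\ga}^m$ using \propref{eigen}, invoke \thmref{glm-diag}, and intersect the finitely many generic loci. Where you diverge is the simple-spectrum half. The paper gets simplicity for free from \corref{max}(ii): since $\un L^\sing$ is a cyclic $\Bmn$-module and $\Bmn_{\un L^\sing}$ is Frobenius (\thmref{cyc+frob}), every eigenspace of $\Bmn_{\un L^\sing}$ --- across all blocks at once --- is one-dimensional for \emph{every} $\uz\in\Xl$, so diagonalizability immediately upgrades to a simple spectrum with no eigenvalue comparison needed. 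You instead separate eigenvalues across blocks by hand, using the Mukhin--Tarasov--Varchenko description of $\cD_v$ as a monic Fuchsian operator whose exponents at $\infty$ encode $\mr\ga$ (and noting that $\la\mapsto\ovla^{\mrn}$ is injective and right multiplication by $\pz^{-n-r}$ is injective). This works --- the exponent multiset at $\infty$ is strictly increasing since $\mr\ga$ is dominant, hence determines $\mr\ga$ --- but it imports the heavier input of \cite[Theorem A.1]{MTV09-1}, which the paper only needs later in \secref{MTV}, and it duplicates information already contained in the Frobenius-algebra structure. The paper's route is shorter and more robust (it would survive even without the Fuchsian description of eigenvalues); yours has the mild virtue of exhibiting explicitly \emph{why} eigenvalues in different weight blocks cannot collide, independently of $\uz$.
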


\begin{proof}
 The diagonalization of $\Bmn_{\un L^\sing}$, for a generic $\uz \in \Xl$, follows from \thmref{glm-diag} and \propref{eigen}, and the property of having a simple spectrum is an immediate consequence of \corref{max}.
 \end{proof}

\begin{rem}
In \cite{CCL}, the cubic Gaudin Hamiltonians for $\glmn$ on $\un L^\sing$ are introduced, and the diagonalization of the Hamiltonians is established. This result is a special case of \thmref{diag}.
\end{rem}

\section{The completeness of the Bethe ansatz and the Feigin--Frenkel center}  \label{BAFF}

In this section, we apply our main results and the Bethe ansatz to obtain a set of candidates for eigenvectors and the corresponding eigenvalues for the Gaudin algebra $\Bmn_{\un L^\sing}$, where $\uz \in \Xl$ and $\un L :=L_1 \otimes \cdots \otimes  L_\ell$ for any irreducible modules $L_1, \ldots, L_\ell \in \Cmn$.

Based on the work of Mukhin, Tarasov, and Varchenko on the Gaudin algebra of the general linear Lie algebra, we give an eigenbasis for $\Bmn_{\un L^\sing}$ for a generic $\uz$ and relate the corresponding eigenvalues to the coefficients of Fuchsian differential operators. This shows that a reformulation of the Bethe ansatz is complete for $\Bmn_{\un L^\sing}$.

The Feigin--Frenkel center $\zgmn$ is conjecturally generated by a distinguished family of Segal--Sugawara vectors. We end the section with a few remarks on the conjecture.

\subsection{The Bethe ansatz} \label{BAC}

Let $\uz \in \Xl$, and let $\bn V =V_1  \otimes \cdots \otimes  V_\ell$ be as in \eqref{dominant}.
We describe the Bethe ansatz method of constructing eigenvectors for the Gaudin algebra $\Bm_{\bn V^\sing}$ (\cite{BF, FFR}).

For each $i=1, \ldots, \ell$, $V_i=L_{m|0}(\xi_i)$ for some dominant integral weight $\xi_i$.
Let $|0 \rangle=v_1 \otimes \ldots \otimes v_\ell \in \bn V $, where $v_i$ is a highest weight vector of $V_i$ for $i=1, \ldots, \ell$, and
let $f_j=E_{j+1,j}$ for $j=1, \ldots, m-1$.
Given $i_1,\ldots,i_p \in \{1,\ldots, m-1 \}$ (not necessarily distinct) and any pairwise distinct $w_1,\ldots, w_p \in \C$ with $w_j \ne z_k$ for all $j,k$,
we define
$$
 \big|w_1^{i_1},\ldots,w_p^{i_p} \big\rangle =\sum_{(I^1,\ldots,I^\ell)} \prod_{k=1}^\ell  \frac{f_{i_{j^k_1}}^{(k)} \cdots f_{i_{j^k_{a_k}}}^{(k)}} {(w_{j^k_1}-w_{j^k_2}) \cdots (w_{j^k _{a_k}}-w_{j^k _{a_k+1}})} |0 \rangle   \in \bn V .
$$
The summation is taken over all ordered partitions $I^1\cup I^2\cup \ldots \cup I^\ell$ of the set $\{1, \ldots, p \}$, where $I^k :=\big\{j^k _1,j^k _2,\ldots,j^k _{a_k } \big\}$ and $w_{j^k _{a_k+1}}:=z_k$ for $k=1, \ldots, \ell$.
The vector $\big|w_1^{i_1},\ldots,w_p^{i_p} \big\rangle$ is called a {\em Bethe vector} in $\bn V $.

Let $\{ \al_i:=\ep_i-\ep_{i+1} \, | \, i=1, \ldots, m-1 \}$ be the set of all simple roots of $\glm$, and let $\check\al_i=\Eii-E_{i+1,i+1}$ for $i=1, \ldots, m-1$.
The equations
\begin{equation} \label{BAn}
\sum_{k=1}^\ell\frac{\xi_k (\check \al_{i_j})}{w_j-z_k}-\sum_{\substack{s=1 \\ s\ne j}}^p \frac{\al_{i_s}(\check\al_{i_j})}{w_j-w_s}=0,  \qquad j=1,\ldots, p,
\end{equation}
are called the {\em Bethe ansatz equations} (\cite{BF, FFR}). If \eqref{BAn} are satisfied and the vector $\big|w_1^{i_1},\ldots,w_p^{i_p} \big\rangle$ is nonzero, then $\big|w_1^{i_1},\ldots,w_p^{i_p} \big\rangle$ is an eigenvector of $\Bm_{\bn V }$ and lies in $\bn V^\sing$ (\cite{MTV06, RV}).

The {\em completeness of the Bethe ansatz} is a conjecture that asserts that the Bethe vectors form a basis for $\Bm_{\bn V^\sing}$ for a generic $\uz$.
It is established if each of $\xi_1, \ldots, \xi_\ell$ is either the first or the last fundamental weight for $\glm$ (see \cite{MV05}).
It is, however, shown in \cite{MV07} that we may take $\ell=2$ and find highest weights $\xi_1$ and $\xi_2$ for $\gl_3$ such that the corresponding Bethe ansatz equations have no solutions for any $(z_1, z_2)\in {\bd X}_{\ns 2}$. Thus, the Bethe ansatz is incomplete for the algebra $\fB_3(z_1, z_2)_{M^\sing}$, where $M=L_{\gl_3}(\xi_1) \otimes L_{\gl_3}(\xi_2)$.

Let
$$
\cE_{\ns i}(z)=\sum_{k=1}^\ell \frac{\xi_k (\Eii)}{z-z_k}-\sum_{s=1}^p\frac{\al_{i_s}(\Eii)}{z-w_s}, \qquad i=1, \ldots, m.
$$
Suppose the Bethe ansatz equations \eqref{BAn} are satisfied. The eigenvalues of $\Bm$ acting on the Bethe vectors are determined by the formula:
\begin{equation}  \label{Bethe-glm-eigenvalue}
\cdet(\cL_m(\uz)) \big|w_1^{i_1},\ldots,w_p^{i_p} \big\rangle=\big(\pz-\cE_{\ns 1}(z) \big) \cdots \big(\pz-\cE_{\ns m}(z) \big)  \big|w_1^{i_1},\ldots,w_p^{i_p} \big\rangle.
\end{equation}
This is proved in \cite[Theorem 9.2]{MTV06}. The description here is similar to that of \cite[Theorem 3.2]{MM17} (see also \cite[Section 3.4]{MM17}).

For any irreducible modules $L_1, \ldots, L_\ell \in \Cmn$, let $\un L=L_1 \otimes \cdots \otimes  L_\ell$.
Recall the $\glmrn$-module $\mr{\un L}$ defined in \eqref{mrL} and the set $W$ of weights defined in \eqref{wt}. For any $\la \in W$, let $\ga=\ovlamn$ and $\mr \ga=\ovla^{\mrn}$.
The Bethe vectors for $\fB_{m+r}(\uz)_{\mr{\un L}^\sing_{\mr \ga}}$ induce natural candidates for eigenvectors for the algebra $\Bmn_{\un L^\sing}$. We may determine the corresponding eigenvalues by the action of the Berezinian $\Ber \big(\cLmn(\uz) \big)$.

\begin{thm} \label{Bethe-glmn-eigenvalue}

Suppose the Bethe ansatz equations \eqref{BAn} are satisfied for $\fB_{m+r}(\uz)_{\mr{\un L}^\sing_{\mr \ga}}$.
Then
\begin{eqnarray*}
& &\Ber \big(\cLmn(\uz) \big)  \left( \tns \phi_{\mr \ga}^m \big( \big|w_1^{i_1},\ldots,w_p^{i_p} \big\rangle \big) \tns \right) \\
& & \hspace{2cm} = \big(\pz-\cE_{\ns 1}(z) \big) \cdots \big(\pz-\cE_{\ns m+r}(z) \big)  \pz^{-n-r} \! \left( \tns \phi_{\mr \ga}^m \big( \big|w_1^{i_1},\ldots,w_p^{i_p} \big\rangle \big) \tns \right)
\end{eqnarray*}
provided that $\big|w_1^{i_1},\ldots,w_p^{i_p} \big\rangle$ are Bethe vectors in $\mr{\un L}^\sing_{\mr \ga}$.

\end{thm}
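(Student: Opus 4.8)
The plan is to obtain this as a direct combination of the Mukhin--Tarasov--Varchenko eigenvalue formula \eqref{Bethe-glm-eigenvalue}, instantiated for the Lie algebra $\gl_{m+r}$ and the module $\bigotimes_{i=1}^\ell \otr_{(m+r)|0}(\mr L_i)$, with the transfer identity of \propref{eigen}(i). Write $v=\big|w_1^{i_1},\ldots,w_p^{i_p}\big\rangle$ for the Bethe vector in question. By \eqref{even-wt}, the space $\mr{\un L}^\sing_{\mr\ga}$ is precisely the weight-$\mr\ga$ singular subspace of the $\gl_{m+r}$-module $\bigotimes_{i=1}^\ell \otr_{(m+r)|0}(\mr L_i)=\bigotimes_{i=1}^\ell L_{(m+r)|0}(\eta_i)$, so $v$ is built from the highest weight vectors of the $L_{(m+r)|0}(\eta_i)$ by exactly the recipe of \secref{BAC} with $m$ there replaced by $m+r$, and the functions $\cE_{\ns i}(z)$ for $i=1,\ldots,m+r$ are formed from the weights $\eta_i$ and the parameters $w_s,\al_{i_s}$ accordingly.

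The first step is to note that, since the Bethe ansatz equations \eqref{BAn} are assumed to hold for $\fB_{m+r}(\uz)_{\mr{\un L}^\sing_{\mr\ga}}$ and $v$ may be taken nonzero, the results of \cite{MTV06, RV} make $v$ an eigenvector of $\fB_{m+r}(\uz)$ lying in $\mr{\un L}^\sing_{\mr\ga}$; hence the scalar differential operator $\cD_v$ associated to $\big(\cdet(\cL_{m+r})(\uz),v\big)$ is defined. Reading \eqref{Bethe-glm-eigenvalue} for $\gl_{m+r}$ gives
$$
\cdet(\cL_{m+r}(\uz))\,v=\big(\pz-\cE_{\ns 1}(z)\big)\cdots\big(\pz-\cE_{\ns m+r}(z)\big)\,v,
$$
and, as the operator on the right has scalar coefficients and $v\neq 0$, comparing the coefficients of $v$ identifies $\cD_v$ with $\big(\pz-\cE_{\ns 1}(z)\big)\cdots\big(\pz-\cE_{\ns m+r}(z)\big)$. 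The second step is to feed this into the computation in the proof of \propref{eigen}(i): that computation uses only that $v$ is an eigenvector of $\fB_{m+r}(\uz)$, not that it belongs to an eigenbasis, and produces
$$
\Ber\big(\cLmn(\uz)\big)\big(\phi_{\mr\ga}^m(v)\big)=\cD_v\,\pz^{-n-r}\big(\phi_{\mr\ga}^m(v)\big)=\big(\pz-\cE_{\ns 1}(z)\big)\cdots\big(\pz-\cE_{\ns m+r}(z)\big)\,\pz^{-n-r}\big(\phi_{\mr\ga}^m(v)\big),
$$
which is the asserted identity.

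Since the substance already resides in \eqref{Bethe-glm-eigenvalue} (that is, \cite[Theorem 9.2]{MTV06}) and in \propref{eigen}, I do not expect a genuine obstacle here; the care required is organizational. One should verify that the $\eta_i$, and hence the functions $\cE_{\ns i}(z)$, are indeed the highest weights of $\otr_{(m+r)|0}(\mr L_i)$ so that the material of \secref{BAC} transfers verbatim to $\gl_{m+r}$; keep track of the pseudo-differential shift $\pz^{-n-r}$ coming from \propref{trunc}; and record explicitly that \propref{eigen}(i)'s eigenvalue identity is proved one eigenvector at a time, so that the present theorem does not presuppose the (possibly false) completeness of the Bethe ansatz for $\fB_{m+r}(\uz)_{\mr{\un L}^\sing_{\mr\ga}}$.
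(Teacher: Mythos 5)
Your proposal is correct and follows exactly the paper's route: the paper's proof is a one-line citation of Proposition \ref{eigen} and formula \eqref{Bethe-glm-eigenvalue}, which is precisely the combination you carry out, with the helpful extra observation that Proposition \ref{eigen}(i)'s computation applies to a single eigenvector and so does not presuppose completeness of the Bethe ansatz.
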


\begin{proof}
This follows from \propref{eigen} and the formula \eqref{Bethe-glm-eigenvalue}.
\end{proof}

\begin{rem}
If the completeness of the Bethe ansatz is valid for $\fB_{m+r}(\uz)_{\mr{\un L}^\sing_{\mr \ga}}$, then the vectors $\phi_{\mr \ga}^m \big( \big|w_1^{i_1},\ldots,w_p^{i_p} \big\rangle \big)$ form an eigenbasis for $\Bmn_{\un L^\sing_{\ga}}$.
\end{rem}

 \subsection{Fuchsian differential operators} \label{MTV}

Let $\uz \in \Xl$. For $V_1, \ldots, V_\ell \in \cC_{m|0}$, let $\bn V =V_1  \otimes \cdots \otimes  V_\ell$.
Mukhin, Tarasov, and Varchenko relate a set of eigenvectors for the Gaudin algebra $\Bm_{\bn V^\sing}$ to the Fuchsian differential operators of order $m$ with polynomial kernels and prescribed singularities. The reader is referred to \cite[Section 3.1]{MV04} for the basics of Fuchsian differential operators.

For $i=1, \ldots, m$, $V_i=L_{m|0}(\xi_i)$ for some $\xi_i \in {\mf X}^+_{m|0}$.
For $i=1,\ldots,\ell$, write $\xi_i=\sum_{j=1}^m \xi_{i,j} \ep_j$ for some $\xi_{i,j} \in \Zp$.
For any singular weight $\mu$ of $\bn V$, write $\mu=\sum_{j=1}^m \mu_j \ep_j$.
Let $\Delta_{\uxi, \mu, \uz}$ be the set of all monic Fuchsian differential operators of order $m$,
\begin{equation} \label{Fuc}
\cD:=\pz^m+\sum_{i=1}^m h_i^\cD(z) \pz^{m-i},
\end{equation}
which satisfy the following properties:

\begin{enumerate}[\normalfont(a)]

\item The singular points of $\cD$ are $z_1,\ldots,z_\ell$ and $\infty$ only.

\item For $i=1,\ldots,\ell$, the exponents of $\cD$ at $z_i$ are equal to $\xi_{i,m},\xi_{i,m-1}+1,\ldots,\xi_{i,1}+m-1$.

\item The exponents of $\cD$ at $\infty$ are equal to $1-m-\mu_1, 2-m-\mu_2,\ldots, -\mu_m$.

\item The kernel of $\cD$ consists of polynomials only.

\end{enumerate}
The set $\Delta_{\uxi, \mu, \uz}$ is nonempty only if $|\mu|=\sum_{i=1}^\ell |\xi_i|$. Here, for instance, $|\mu|:=\sum_{j=1}^m \mu_j$.
We refer to \cite{MTV09-1, MTV09-2, MTV09-3} for details on $\Delta_{\uxi, \mu, \uz}$.

Let $v$ be any eigenvector of the algebra $\Bm_{\bn V^\sing_{\! \mu}}$. We denote by $E_v$ the eigenspace containing $v$. The space $E_v$ is one-dimensional by \remref{rem-Frob}.
Let $\cD_{\tns v}$ be the scalar differential operator associated to $\big(\cdet(\cL_m(\uz)), v \big)$ (see \eqref{diff}).
According to \cite[Theorem A.1]{MTV09-1}, $\cD_{\tns v} \in \Delta_{\uxi, \mu, \uz}$.
In addition, as seen in \cite[Theorem 6.1]{MTV09-2}, the assignment $E_v \mapsto \cD_{\tns v}$ is a bijection from the set of eigenspaces of $\Bm_{\bn V^\sing_{\! \mu}}$ to the set $\Delta_{\uxi, \mu, \uz}$. We denote its inverse by $\cD \mapsto E^\cD$.
For each $\cD \in \Delta_{\uxi, \mu, \uz}$, let $\omega(\cD)$ be a nonzero element of $E^\cD$. We have
\begin{equation} \label{glm-eigenvalue}
\cdet(\cL_m(\uz)) \big( \omega(\cD) \big)=\cD \big( \omega(\cD) \big).
\end{equation}
The construction of $\omega(\cD)$ may be viewed as a reformulation of the Bethe ansatz for $\Bm_{\bn V^\sing}$.

\begin{thm} [{\cite{MTV09-2, MTV09-3}}] \label{glm-complete}
For a generic $\uz \in \Xl$, the set
$$
\setc*{\omega(\cD) \in \bn V^\sing_{\! \mu}}{\cD \in \Delta_{\uxi, \mu, \uz}}
$$
is an eigenbasis for the algebra $\Bm_{\bn V^\sing_{\! \mu}}$.

\end{thm}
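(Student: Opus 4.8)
The plan is to assemble the statement from three ingredients already in hand: the diagonalizability of $\Bm_{\bn V^\sing}$ for generic $\uz$ (\thmref{glm-diag}), the Frobenius property of $\Bm_{\bn V^\sing}$ (\remref{rem-Frob}), and the bijection $E_v\mapsto\cD_{\tns v}$ from the set of eigenspaces of $\Bm_{\bn V^\sing_{\!\mu}}$ onto $\Delta_{\uxi,\mu,\uz}$ recalled above from \cite{MTV09-1,MTV09-2}.

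First I would note that $\Bm$ preserves the weight space decomposition of $\bn V^\sing$, so $\bn V^\sing_{\!\mu}$ is a $\Bm$-submodule and $\Bm_{\bn V^\sing_{\!\mu}}$ is a quotient algebra of $\Bm_{\bn V^\sing}$. By \remref{rem-Frob} and \lemref{Lu20-2}(ii), every eigenspace of $\Bm_{\bn V^\sing}$ is one-dimensional. Since $\Bm$ acts block-diagonally with respect to the weight grading, a nonzero vector of a one-dimensional eigenspace of $\Bm_{\bn V^\sing}$ is forced to be homogeneous (each of its weight components is again an eigenvector for the same character, hence proportional to the others, hence all but one vanish). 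Consequently the eigenspaces of $\Bm_{\bn V^\sing_{\!\mu}}$ are precisely those eigenspaces of $\Bm_{\bn V^\sing}$ of weight $\mu$, and each of them is one-dimensional.

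Next, for a generic $\uz$, \thmref{glm-diag} gives that $\Bm_{\bn V^\sing}$ is diagonalizable, and therefore so is its quotient $\Bm_{\bn V^\sing_{\!\mu}}$; thus $\bn V^\sing_{\!\mu}$ is the direct sum of the one-dimensional eigenspaces of $\Bm_{\bn V^\sing_{\!\mu}}$, so $\dim\bn V^\sing_{\!\mu}$ equals the number of these eigenspaces. By the bijection $E_v\mapsto\cD_{\tns v}$ this number equals the cardinality of $\Delta_{\uxi,\mu,\uz}$, and the eigenspaces are exactly $\{E^\cD:\cD\in\Delta_{\uxi,\mu,\uz}\}$. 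Since $\omega(\cD)$ is a nonzero vector of $E^\cD$ for each $\cD$, the set $\{\omega(\cD):\cD\in\Delta_{\uxi,\mu,\uz}\}$ consists of $\dim\bn V^\sing_{\!\mu}$ common eigenvectors lying in pairwise distinct eigenspaces; such vectors are linearly independent, so they form an eigenbasis for $\Bm_{\bn V^\sing_{\!\mu}}$, and \eqref{glm-eigenvalue} records the corresponding eigenvalues.

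The real work is not in this bookkeeping but in the two imported inputs: the genericity argument that yields diagonalizability with a simple spectrum, due to Rybnikov and to Mukhin--Tarasov--Varchenko, and---more fundamentally---the surjectivity half of the bijection, namely that every Fuchsian operator in $\Delta_{\uxi,\mu,\uz}$ arises as the operator $\cD_{\tns v}$ of some eigenvector $v$. I expect that to be the main obstacle, and I would cite \cite{MTV09-1,MTV09-2,MTV09-3} for it rather than reprove it here.
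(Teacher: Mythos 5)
Your proposal is correct, and it matches the paper's treatment: the paper offers no proof of \thmref{glm-complete} beyond citing \cite{MTV09-2, MTV09-3}, and your assembly of the result from \thmref{glm-diag}, the one-dimensionality of eigenspaces (\remref{rem-Frob} with \lemref{Lu20-2}(ii)), and the bijection $E_v\mapsto\cD_{\tns v}$ of \cite[Theorem 6.1]{MTV09-2} is exactly how these ingredients are meant to fit together. You also correctly locate the genuine mathematical content in the imported inputs --- the surjectivity of the eigenspace-to-operator correspondence and the genericity argument --- which is precisely what the citation defers to.
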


Let $\un L$, $\mr{\un L}$, $\ga$ and $\mr \ga$ be as in \secref{BAC}.
Recall that for $i=1, \ldots, \ell$, $\otr_{(m+r)|0} \big(\mr{L}_i \big)=L_{(m+r)|0}(\eta_i)$ for some $\eta_i \in {\mf X}^+_{\mrz}$.
Set $\ueta=(\eta_1, \ldots, \eta_\ell)$.
We may give an eigenbasis for $\Bmn_{\un L^\sing_{\ga}}$ with a description of the corresponding eigenvalues in a more explicit form.

\begin{thm} \label{glmn-eigenbasis}

For a generic $\uz \in \Xl$, the set
$$
\setc*{\phi_{\mr \ga}^m \big( \omega(\cD) \big)}{ \cD \in \Delta_{\ueta, \mr \ga, \uz}}
$$
is an eigenbasis for the algebra $\Bmn_{\un L^\sing_{\ga}}$. Moreover, for any $\cD \in \Delta_{\ueta, \mr \ga, \uz}$,
$$
\Ber \big(\cLmn(\uz) \big) \! \left( \phi_{\mr \ga}^m \big( \omega(\cD) \big) \right)= \cD \pz^{-n-r} \! \left( \phi_{\mr \ga}^m \big( \omega(\cD) \big) \right).
$$

\end{thm}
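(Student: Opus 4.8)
The plan is to transport Mukhin--Tarasov--Varchenko's completeness result (\thmref{glm-complete}) along the chain of identifications set up in \secref{pf}. The first step is to recognise the objects over $\gl_{m+r}$ to which \thmref{glm-complete} applies: by \eqref{even-wt} the source of $\phi_{\mr\ga}^m$ is $\mr{\un L}^\sing_{\mr\ga}=\otr_{(m+r)|0}(\mr{\un L})^\sing_{\mr\ga}$, and $\otr_{(m+r)|0}(\mr{\un L})=\bigotimes_{i=1}^\ell L_{(m+r)|0}(\eta_i)=:\bn V$ is an $\ell$-fold tensor product of irreducible polynomial $\gl_{m+r}$-modules with $\mr\ga$ a weight in ${\mf X}^+_{\mrz}$. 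Hence $\mr{\un L}^\sing_{\mr\ga}=\bn V^\sing_{\mr\ga}$, and $\Delta_{\ueta,\mr\ga,\uz}$ is exactly the set of monic Fuchsian operators of order $m+r$ attached to $(\ueta,\mr\ga,\uz)$ as in \secref{MTV}. Applying \thmref{glm-complete} to $\bn V$ (with $m$ replaced by $m+r$) then shows that, for a generic $\uz\in\Xl$, the set ${\rm B}:=\{\,\omega(\cD)\mid\cD\in\Delta_{\ueta,\mr\ga,\uz}\,\}$ is an eigenbasis for $\fB_{m+r}(\uz)_{\mr{\un L}^\sing_{\mr\ga}}$.

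Next I would apply \propref{eigen}(i) to this ${\rm B}$. Part (i) immediately gives that $\phi_{\mr\ga}^m({\rm B})=\{\,\phi_{\mr\ga}^m(\omega(\cD))\mid\cD\in\Delta_{\ueta,\mr\ga,\uz}\,\}$ is an eigenbasis for $\Bmn_{\un L^\sing_\ga}$, which is the first assertion, and that for each $\cD\in\Delta_{\ueta,\mr\ga,\uz}$
$$
\Ber\big(\cLmn(\uz)\big)\big(\phi_{\mr\ga}^m(\omega(\cD))\big)=\cD_{\omega(\cD)}\,\pz^{-n-r}\big(\phi_{\mr\ga}^m(\omega(\cD))\big),
$$
where $\cD_{\omega(\cD)}$ is the scalar differential operator associated to $\big(\cdet(\cL_{m+r}(\uz)),\omega(\cD)\big)$. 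To conclude it remains to check that $\cD_{\omega(\cD)}=\cD$: by \eqref{glm-eigenvalue}, applied over $\gl_{m+r}$, one has $\cdet(\cL_{m+r}(\uz))\big(\omega(\cD)\big)=\cD\big(\omega(\cD)\big)$, and since the eigenspace $E^{\cD}$ containing $\omega(\cD)$ is one-dimensional the defining property \eqref{diff} of the associated scalar differential operator forces $\cD_{\omega(\cD)}=\cD$. Substituting yields the claimed eigenvalue formula.

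The proof is essentially bookkeeping once the odd-reflection and truncation machinery of Sections~\ref{GM} and \ref{main} is available, so I do not anticipate a genuine obstacle. The two points needing care are (a) verifying that $\bn V$ and $\mr\ga$ really meet the hypotheses of \thmref{glm-complete} --- in particular that $\mr\ga$, viewed via \eqref{even-wt} as a weight of $\bn V$, is the singular weight indexing $\Delta_{\ueta,\mr\ga,\uz}$, so that $\Delta_{\ueta,\mr\ga,\uz}\neq\emptyset$ exactly when $\un L^\sing_\ga\neq0$ (i.e.\ $\la\in W$) --- and (b) confirming that a single genericity condition on $\uz$ suffices, which it does here since only the one weight space $\ga$ is at issue, in contrast with the proof of \thmref{diag}.
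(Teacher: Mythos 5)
Your proposal is correct and follows exactly the route of the paper, whose proof is the one-line citation of \propref{eigen}, \thmref{glm-complete} and \eqref{glm-eigenvalue}; you have simply unpacked the same three ingredients, including the identification $\cD_{\omega(\cD)}=\cD$ via \eqref{glm-eigenvalue} and the recognition of $\mr{\un L}^\sing_{\mr\ga}$ as a $\gl_{m+r}$ singular weight space through \eqref{even-wt}.
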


\begin{proof}
This follows from \propref{eigen}, \thmref{glm-complete} and the formula \eqref{glm-eigenvalue}.
\end{proof}

\thmref{diag}, together with \thmref{glmn-eigenbasis}, may be considered as the completeness of a reformulation of the Bethe ansatz for the Gaudin algebra $\Bmn_{\un L^\sing}$.

 \subsection{Some remarks on the Feigin--Frenkel center} \label{c-FFC}

Recall the algebras $\zgmn$ and $\zmn$ defined in \secref{FFC}.
The superalgebra $U(\gmi)$ is equipped with the (even) derivation $\T:=-d/dt$ defined by
$$
\T(1)=0 \quad {\rm and} \quad \T(A[-r])=r A[-r-1] \quad \mbox{for $A \in \glmn$ and $r \in \N$.}
$$
The Feigin--Frenkel center $\zgmn$ is viewed as a subalgebra of $U(\gmi)$ and is $\T$-invariant (cf. the translation operator $T \in \End \big(\Vac(\glmn) \big)$ in \secref{FFC}).

We have seen that $\zmn$ is a commutative subalgebra of $\zgmn$ and gives rise to the Gaudin algebra $\Bmn$ for $\uz \in \Xl$.
There also exists a large commutative subalgebra of $\zgmn$ containing $\zmn$, which we now define.
Let $\zmnh$ be the subalgebra of $U(\gmi)$ generated by the set
$$
\setc*{\! \T^r (b_i) }{i \le m-n, \, i \in \Z, \, r \in \Zp \!},
$$
where $b_i$'s are given by \eqref{Ber-tau}.
Clearly, $\zmn \subseteq \zmnh$.
By \propref{MR} and the fact that $\zgmn$ is $\T$-invariant, we see that $\zmnh$ is a commutative subalgebra of $\zgmn$.

Again, the symbol $|0$ is dropped from the subscript $m|0$. For instance, $\zgm:=\fz(\wh{\gl}_{m|0})$, $\hat \fz_{m}:=\hat \fz_{m|0}$, etc.
By the Feigin--Frenkel theorem \cite{FF}, the algebra $\zgm$ has a complete set of Segal--Sugawara vectors $S_1, \ldots, S_m$ (see also \cite{GW, Ha}).
That is, the set $\setc*{\! \T^r(S_i) }{ i=1, \ldots, m, \, r \in \Zp  \!}$ is algebraically independent and $\fz(\widehat{\gl}_{m})=\hat \fz_{m}.$

An explicit example of a complete set of Segal--Sugawara vectors for $\zgm$ is given in \cite{CT, CM}. It can be described as follows.
We can expand $\cdet (\cT_m)$ as
$$
\cdet (\cT_m)=\sum_{i=0}^{m}  a_i \tau^i, \qquad \mbox{for some $a_i \in \zgm$.}
$$
According to \cite[Theorem 3.1]{CM} (see also \cite[Theorem 7.14]{Mo}), $\{a_1, \ldots, a_m \}$ is a complete set of Segal--Sugawara vectors.

Suppose $m, n \in \N$. Unlike the non-super case, the elements $\T^r (b_i)$, for $i \in \Z$ with $i \le m-n$ and $r \in \Zp$, are not algebraically independent (see \cite[Remark 3.4(ii)]{MR}).
However, we expect the following conjecture to be true (cf. {\em loc. cit.}).

\begin{conj} \label{FFC-gen}
For $n \in \N$, the algebra $\zgmn$ is generated by $\T^r (b_i)$ for $i \in \Z$ with $i \le m-n$ and $r \in \Zp$. In other words, $\zgmn=\zmnh$.
\end{conj}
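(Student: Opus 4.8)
\medskip

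\noindent\textbf{A strategy toward \conjref{FFC-gen}.}
The natural plan is to pass to the associated graded with respect to the PBW filtration on $\cUm=U(\gmi)$: since $\zmnh\subseteq\zgmn$ and both are graded subalgebras of ${\rm gr}\,\cUm\cong S(\gmi)$, it suffices to prove that the symbols of the elements $\T^r(b_i)$ ($i\le m-n$, $r\in\Zp$) generate ${\rm gr}\,\zgmn$. The first step would be the \emph{classical identification}
$$
{\rm gr}\,\zgmn = S(\gmi)^{\glmn[t]},
$$
the super-analogue of the Beilinson--Drinfeld/Feigin--Frenkel description of the classical center at the critical level. The inclusion ``$\subseteq$'' is immediate from $\glmn[t]\,\zgmn=0$; the reverse inclusion is the crux, and the two routes I would attempt are (a) a super free-field (Wakimoto) realization of $\Vac(\glmn)$ together with its screening operators, showing that the intersection of their kernels is no larger than the candidate subalgebra, and (b) a flat degeneration of $\Vac(\glmn)$ through which the dimension of the center does not jump.

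Granting the classical identification, the second step is purely commutative: that $S(\gmi)^{\glmn[t]}$ is generated by the symbols of the $\T^r(b_i)$. Here the explicit expansion $\Ber(\Tmn)=\sum_{k\le m-n}b_k\tau^k$ of \eqref{Ber-tau}, together with the factorization of the Berezinian through a column determinant of the even block and the inverse of the odd block recorded in \secref{Ber}, degenerates classically to the ``supersymmetric'' generators of the invariant ring $S(\glmn)^{\glmn}$ in the sense of Sergeev; one then has to lift this to the arc space, i.e.\ to show that $S(\gmi)^{\glmn[t]}$ is generated by the $\T$-derivatives of the $\glmn$-invariants of $S(\glmn)$, and to pin down which finite subfamily of the $b_k$ is actually needed. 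For $n=0$ this is classical and is what underlies the Feigin--Frenkel theorem recalled in \secref{c-FFC}; for $n>0$ it must be proved directly, and it is precisely here that the failure of algebraic independence of the $\T^r(b_i)$ noted after \propref{generator} (cf.\ \cite[Remark 3.4(ii)]{MR}) intervenes: since $S(\glmn)^{\glmn}$ is not a polynomial algebra, a Hilbert-series count cannot replace an explicit generation argument, and the relations among the generators must be tracked throughout.

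I expect the first step to be the genuine obstacle: we are not aware of a super-oper formalism or a super-screening calculus established to compute $\zgmn$ in full, and the adjoint-quotient theory for $\glmn$ underlying the classical limit is more delicate than in the reductive case. A complementary and more elementary observation --- the source of the optimism voiced above rather than an actual reduction --- comes from \secref{main}. For any $\ell$-fold tensor product $\un L$ of irreducible polynomial $\glmn$-modules and any $\uz\in\Xl$, the image of $\zgmn$ under $\Psi_{\tns\uz}$ acts on $\un L$ by operators commuting with $\glmn$ (as $\zgmn$ lies in the adjoint invariants of $\cUm$ and $\Psi_{\tns\uz}$ is $\glmn$-equivariant), forms a commutative subalgebra (commutativity of the full Gaudin algebra, cf.\ \cite{MR}), and contains $\Bmn_{\un L^\sing}$, the image of $\zmn$; since the latter is, by \corref{max}(i), already a \emph{maximal} commutative subalgebra of $\End(\un L^\sing)$, the subalgebras of $\End(\un L^\sing)$ generated by the actions of $\zmn$, $\zmnh$ and $\zgmn$ all coincide. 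Hence \conjref{FFC-gen} would follow once the family of all these actions is known to be faithful on $\zgmn$ --- but establishing that faithfulness appears to be of the same order of difficulty as the conjecture itself, so this line is best read as evidence in its favour.
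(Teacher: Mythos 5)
The statement you are addressing is a \emph{conjecture} that the paper explicitly leaves open (``Solving the conjecture would be challenging. We will not deal with it here''), so there is no proof in the paper to compare against; the authors offer only, in their final subsection, the same heuristic evidence that you give in your last paragraph, namely the maximality of $\Bmn_{\un L^\sing}$ from \corref{max}. Your submission is accordingly not a proof but a programme, and it contains two genuine gaps, both of which you locate correctly but neither of which you close. First, the classical identification ${\rm gr}\,\zgmn=S(\gmi)^{\glmn[t]}$ is taken on faith: neither a super Wakimoto/screening computation of $\zgmn$ nor a flat-degeneration argument is established for $\glmn$ with $n>0$, and this is precisely where the Feigin--Frenkel machinery for reductive $\fg$ fails to transfer, since the adjoint invariant theory of $\glmn$ is not that of a polynomial adjoint quotient. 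Second, even granting that identification, the assertion that $S(\gmi)^{\glmn[t]}$ is generated by the $\T$-derivatives of the symbols of the $b_i$ is itself an open super analogue of the arc-space generation results used in the $n=0$ case; your own observation that the $\T^r(b_i)$ are not algebraically independent (cf.\ \cite[Remark 3.4(ii)]{MR}) rules out any character or Hilbert-series count as a substitute for an explicit argument. The reduction from the filtered statement to the graded one is sound, but both graded inputs are unproven.

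Your representation-theoretic observation in the last paragraph is consistent with the paper's own remarks, but note two caveats. The claim that the images of $\zmn$, $\zmnh$ and $\zgmn$ in $\End(\un L^\sing)$ coincide requires that the image of \emph{all} of $\zgmn$ under $\Psi_{\tns\uz}$ be commutative and commute with $\Bmn$; the commutativity proved in \cite{MR} concerns only the subalgebra generated by the $b_{ij}$, so even this auxiliary step needs an argument in the super setting. And, as you concede, passing from equality of images over all $\un L$ and $\uz$ to equality $\zgmn=\zmnh$ requires a joint faithfulness statement that is essentially equivalent to \conjref{FFC-gen} itself. In sum, the proposal is an honest and well-aimed outline, but no step of it constitutes a proof or even a strict reduction of the conjecture beyond what the paper already records as open.
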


\begin{rem}
Molev and Mukhin \cite{MM15} have proved that $\fz(\wh \gl_{1|1})=\hat \fz_{1|1}$.
\end{rem}

The algebra $\zmnh$ gives rise to a subalgebra of $\Ul$ containing the Gaudin algebra $\Bmn$.
Recall the map $\Psi_{\uz}$ defined in \secref{Gaudin}.
Let $\Bmnh$ be the subalgebra of $\Ul$ generated by the coefficients of the series $\Psi_{\uz} \big(\T^r (b_i) \big)$, for $i \in \Z$ with $i \le m-n$ and $r \in \Zp$.
It is obvious that $\Bmn$ is a subalgebra of $\Bmnh$.
The following proposition says that $\Bmnh$ and $\Bmn$ are equal.

\begin{prop} \label{der}
For $z \in \Xl$, $\Bmnh=\Bmn$.
\end{prop}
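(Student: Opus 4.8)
The plan is to prove the two inclusions $\Bmn\subseteq\Bmnh$ and $\Bmnh\subseteq\Bmn$ separately. The first is immediate: putting $r=0$ in the definition of $\Bmnh$ shows that the coefficients of each series $b_i(z)=\Psi_{\tns\uz}(b_i)$, $i\le m-n$, are among the generators of $\Bmnh$, and by \propref{Bmn-gen} these coefficients already generate $\Bmn$. So the whole content lies in the reverse inclusion, and for it I would show that every coefficient of $\Psi_{\tns\uz}\big(\T^r(b_i)\big)$ belongs to $\Bmn$, for all $i\in\Z$ with $i\le m-n$ and all $r\in\Zp$. (Note that $\T$ preserves $\cUm$, since $\T(A[-s])=sA[-s-1]\in\gmi$ for $s\in\N$ and $\T(1)=0$, and $\Psi_{\tns\uz}(\cUm)\subseteq\Ulz$, so these expressions make sense.)

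The key step is an intertwining relation between the derivation $\T$ of $\cUm$ and formal differentiation with respect to $z$. Let $\frac{d}{dz}$ denote the even derivation of $\Ulz=\Ul\blb z^{-1}\brb$ given by $\frac{d}{dz}\big(\sum_j a_j z^{-j}\big)=\sum_j(-j)\,a_j\,z^{-j-1}$. I would establish
$$
\Psi_{\tns\uz}\big(\T(x)\big)=\frac{d}{dz}\,\Psi_{\tns\uz}(x)\qquad\text{for all }x\in\cUm .
$$
Since $\Psi_{\tns\uz}\colon\cUm\to\Ulz$ is a superalgebra homomorphism and $\T$, $\frac{d}{dz}$ are even derivations, both sides are $\Psi_{\tns\uz}$-derivations (linear maps $D$ with $D(xy)=D(x)\Psi_{\tns\uz}(y)+\Psi_{\tns\uz}(x)D(y)$), so it suffices to check the identity on the algebra generators $A[-s]$ ($A\in\glmn$, $s\in\N$) of $\cUm=U(\gmi)$. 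There $\Psi_{\tns\uz}\big(\T(A[-s])\big)=\Psi_{\tns\uz}\big(sA[-s-1]\big)=s\sum_{i=1}^\ell A^{(i)}(z_i-z)^{-s-1}$, which equals $\frac{d}{dz}\sum_{i=1}^\ell A^{(i)}(z_i-z)^{-s}=\frac{d}{dz}\Psi_{\tns\uz}(A[-s])$. Iterating gives $\Psi_{\tns\uz}\big(\T^r(x)\big)=\frac{d^r}{dz^r}\Psi_{\tns\uz}(x)$ for every $r\in\Zp$.

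To finish, I would apply this with $x=b_i$. Writing $b_i(z)=\Psi_{\tns\uz}(b_i)=\sum_j c_{ij}z^{-j}$ with $c_{ij}\in\Ul$, the $c_{ij}$ are exactly the generators of $\Bmn$ furnished by \propref{Bmn-gen}, and
$$
\Psi_{\tns\uz}\big(\T^r(b_i)\big)=\frac{d^r}{dz^r}\,b_i(z)=\sum_j (-j)(-j-1)\cdots(-j-r+1)\,c_{ij}\,z^{-j-r}.
$$
Hence every coefficient of $\Psi_{\tns\uz}\big(\T^r(b_i)\big)$ is a $\C$-linear combination of the $c_{ij}$ and so lies in $\Bmn$; this proves $\Bmnh\subseteq\Bmn$ and hence $\Bmnh=\Bmn$. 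I do not anticipate a real obstacle here: the only point requiring care is the intertwining identity $\Psi_{\tns\uz}\circ\T=\frac{d}{dz}\circ\Psi_{\tns\uz}$, and that reduces to the one-line verification on the generators $A[-s]$ once one observes that both sides are $\Psi_{\tns\uz}$-derivations; the super signs are irrelevant because $\T$ and $\frac{d}{dz}$ are even.
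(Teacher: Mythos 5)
Your proposal is correct and follows essentially the same route as the paper: both rest on the intertwining identity $\Psi_{\tns\uz}\circ\T=\tfrac{d}{dz}\circ\Psi_{\tns\uz}$ (the paper checks it on monomials $A_1[-r_1]\cdots A_k[-r_k]$, you reduce to generators via the derivation property, which amounts to the same verification) and then observe that the coefficients of $\tfrac{d^r}{dz^r}b_i(z)$ are linear combinations of the coefficients of $b_i(z)$, hence lie in $\Bmn$ by \propref{Bmn-gen}. The easy inclusion $\Bmn\subseteq\Bmnh$ that you spell out is noted in the paper just before the proposition.
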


\begin{proof}
It is straightforward to verify that
$$
 \Psi_{\tns \uz}\big(\T(A_1[-r_1] \ldots A_k[-r_k]) \big)=\frac{d}{dz} \big(\Psi_{\uz} (A_1[-r_1] \ldots A_k[-r_k]) \big)
$$
for $A_1, \ldots, A_k \in \glmn$, $r_1, \ldots, r_k \in \N$ and $k \in \N$.
It follows that
$$
\hspace{3cm} \Psi_{\tns \uz} \big(\T(a) \big)=\frac{d}{dz} \big( \Psi_{\tns \uz} (a) \big) \qquad \mbox{ for all $a \in U(\gmi)$}.
$$
Thus, for all $i \in \Z$ with $i \le m-n$ and $r \in \Zp$,
$$
\Psi_{\uz} \big(\T^r (b_i) \big)=\frac{d^r}{dz^r} \tns \big( \Psi_{\tns \uz}(b_i) \big)=\frac{d^r}{dz^r} \tns \big( b_i(z) \big).
$$
Since the coefficients of the series $\disp{\frac{d^r}{dz^r} \tns \big( b_i(z) \big)}$ clearly belong to $\Bmn$, the proposition follows.
\end{proof}

Let $\un L=L_1 \otimes \cdots \otimes  L_\ell$, where $L_1, \ldots, L_\ell \in \Cmn$ are irreducible.
The algebra $\Bmnh_{\un L^\sing}$ coincides with the Gaudin algebra $\Bmn_{\un L^\sing}$, which is a maximal commutative subalgebra of $\End(\un L^\sing)$ for $\uz \in \Xl$ (\corref{max}) and is diagonalizable for a generic $\uz$ (\thmref{diag}).
The maximality gives an indication that \conjref{FFC-gen} is likely to hold.

\vskip 0.5cm
\noindent{\bf Acknowledgments.}
The first author was partially supported by NSTC grant 113-2115-M-006-010 of Taiwan.
The second author was partially supported by NSTC grant 112-2115-M-006-015-MY2 of Taiwan.

\bigskip
\frenchspacing

\end{document}